%% file: manuscript_main_unmarked.tex
\documentclass[a4paper,fleqn]{cas-sc}

\usepackage[numbers]{natbib}

\usepackage{graphicx}
\usepackage{amsmath,amssymb,amsthm,amsfonts}
\DeclareMathAlphabet{\mathfrak}{U}{euf}{m}{n}
\usepackage[english]{babel}
\usepackage[normalem]{ulem}
\usepackage{mathtools}
\usepackage{algorithmicx}
\usepackage{algorithm}
\usepackage[noend]{algpseudocode}
\usepackage{color,xcolor}
\usepackage{hyperref}

\usepackage{subfig}

\usepackage{float}

\usepackage{booktabs}
\usepackage{siunitx}

\newtheorem{theorem}{Theorem}[section]
\newtheorem{remark}{Remark}[section]
\newtheorem{lemma}{Lemma}[section]

\newcommand{\avg}[1]{\overline{#1}}
\newcommand{\jump}[1]{\ensuremath{[\![ #1 ]\!]}}

\newcommand{\M}{\mathcal{P}}

\newcommand{\hv}{\hat{v}}
\newcommand{\hu}{\hat{u}}
\newcommand{\hq}{\hat{q}}
\newcommand{\hh}{\hat{h}}
\newcommand{\hb}{\hat{b}}
\newcommand{\hw}{\hat{w}}

\newcommand{\hF}{\hat{F}}

\newcommand{\hbe}{\hat{\beta}}
\newcommand{\hal}{\hat{\alpha}}

\newcommand{\kk}{\mathfrak{k}}

\newcommand\revOne[1]{\textcolor{black}{#1}}
\newcommand\revTwo[1]{\textcolor{black}{#1}}

\begin{document}
\let\WriteBookmarks\relax
\def\floatpagepagefraction{1}
\def\textpagefraction{.001}

% Short title
\shorttitle{High-order structure preserving method for stochastic Galerkin SWE - unification and 2D extension}    

% Short author
\shortauthors{\"{O}ffner et al.}

% Main title of the paper
\title [mode = title]{A high-order, structure preserving scheme for the stochastic Galerkin shallow water equations - unification and two-dimensional extension}
%comparison and extension} 

\author[1]{Philipp \"Offner}[orcid=0000-0002-1367-1917]

% Email id of the first author
\ead{mail@philippoeffner.de}

% Credit authorship
\credit{Conceptualization; Formal analysis; Methodology; Investigation; Writing - original draft}

\author[2]{Per Pettersson}[orcid=0009-0005-4259-8092]

% Email id of the second author
\ead{per.pettersson@norceresearch.no}

% Credit authorship
\credit{Conceptualization; Formal analysis; Methodology; Investigation; Writing - original draft}

\author[3]{Andrew R. Winters}[orcid=0000-0002-5902-1522]

% Corresponding author indication
\cormark[3]

% Email id of the third author
\ead{andrew.winters@liu.se>
}

% Credit authorship
\credit{Investigation; Visualization; Software; Writing - original draft}

% Address/affiliation
\affiliation[1]{organization={Clausthal University of Technology, Institute of Mathematics},
            city={Clausthal-Zellerfeld},
            postcode={38678}, 
            country={Germany}}

\affiliation[2]{organization={NORCE Norwegian Research Centre},
            city={Bergen},
            postcode={5008}, 
            country={Norway}}

\affiliation[3]{organization={Link\"{o}ping University, Department of Mathematics, Division of Applied Mathematics},
            city={Link\"{o}ping},
            postcode={581 83}, 
            country={Sweden}}

% Corresponding author text
\cortext[3]{Corresponding author}

%%%%%%
%%%%%%

\begin{abstract}
Recently, two independent research efforts have been made to study the stochastic Galerkin formulation of the shallow water equations. %In particular, 
Bender and \"Offner developed entropy-conservative discontinuous Galerkin (DG) methods to solve the stochastic shallow water equations \revTwo{in a stochastic Galerkin framework} using Roe variable transformation, while Dai, Epshteyn and collaborators proposed second-order, energy-stable and well-balanced schemes for the same class of problems with a specific projection step used inside the Galerkin projection together with high-order quadrature rules and a time-step restriction. In this paper,
we provide a comprehensive comparison of the two methodologies mentioned, focusing on their theoretical properties and practical implementation aspects. We highlight shared foundational concepts and key differences of both approaches, with a particular focus on the selection of basis functions in the stochastic domain. As a highlight, we show that under specific conditions, the two formulations align, offering a unified framework that connects these distinct approaches. From our theoretical findings, we extend the development of high-order entropy conservative DG methods for the one-dimensional stochastic Galerkin shallow equations to two space dimensions; constructing entropy conservative two-point fluxes via primitive variables instead of entropy variables and applying it in our high-order DG setting.  
In numerical simulations, we verify and support our theoretical findings of a well-balanced and entropy-stable DG scheme which can be used to solve geophyiscal fluid flows with uncertainty. 
\end{abstract}

% Keywords
\begin{keywords}
 Shallow water equations \sep Stochastic Galerkin method \sep Entropy conservation (stability) \sep Structure-preserving \sep Discontinuous Galerkin Spectral Element Method (DGSEM) \sep
 Uncertainty Quantification (UQ)
\end{keywords}

\maketitle

\section{Introduction}\label{sec_intro}

The shallow water (SW) equations, also referred to as the Saint-Venant equations, characterize the behavior of hydrostatic free-surface waves driven by gravity. These nonlinear hyperbolic balance laws are valid under the assumption that the water depth is much smaller than the wavelength, making them particularly suited to modeling a wide range of geophysical and environmental flow phenomena.
The SW equations are extensively used across various engineering and scientific disciplines, including river hydrodynamics, urban flood management~\cite{Soares-Frazao_etal_08}, tidal flow analysis~\cite{Kelly_etal_16}, storm surge prediction~\cite{Chen_etal_06}, and tsunami modeling~\cite{Varsoliwala_Singh_21}. 
As such, their numerical approximation remains an area of intense research where the demand is on high-order and structure-preserving methods, including a variety of different framework ranging from finite volume methods (FV) to finite element (FE) schemes, cf. \cite{ranocha2017, fjordholm2011, mantri2024fully, wu2021high, fu2022high, xing2014exactly, ciallella2025, zbMATH07342032,zbMATH07708330, abgrall2024} and references therein. These approaches are designed to produce simulations of real-world hydrodynamic systems that are reliable, physically accurate, and computationally efficient. High-order methods are especially advantageous, as they achieve lower errors even on relatively coarse grids. Currently, there is significant emphasis on structure-preserving techniques, which go beyond straightforward partial differential equation (PDE) discretization to maintain key physical or mathematical properties. In the context of the SW equations, such methods often focus on preserving stationary solutions (e.g., the lake-at-rest state), a feature known as the well-balancing property. They may also include entropy-conserving or dissipative schemes, as well as methods that ensure the non-negativity of the water height.
In high-order numerical discretization these implicit constraints have to be fulfilled simultaneously together with consistency conditions for solving the underlying PDE. 

To further enhance the reliability and predictive capability of SW simulations in real-world applications, comparison with experimental or observational data is increasingly sought. However, achieving such fidelity is complicated by the presence of uncertainties inherent to environmental modeling. Recently, the role of uncertainty quantification (UQ) in SW modeling has gained significant attention, as various sources of uncertainty can critically affect the accuracy of simulation outcomes. 
\revTwo{
These include uncertainties in the initial conditions, such as water height and velocity; measurement errors in bottom topography; and uncertainties in source terms, such as bottom friction or external forces. Separately, model simplifications introduce model-form uncertainty by creating discrepancies between the mathematical representation and physical reality.
}
Incorporating these uncertainties into the SW framework introduces several numerical and modeling challenges. These include maintaining conservation and well-balancing in the presence of uncertain inputs, accurately capturing the propagation of randomness through nonlinear and potentially chaotic dynamics, ensuring the positivity of water height under stochastic variations, and extending structure-preserving properties, such as equilibrium preservation, to stochastic settings. 

There exists a variety of methods to handle forward UQ for PDEs, where the most common approaches are either Monte-Carlo (MC) simulations, stochastic collocation (SC) or stochastic Galerkin (SG) ansatz, or variation of those,  cf. \cite{abgrall2017uq} and references therein. 
MC methods are widely used due to their simplicity of implementation and excellent scalability with many uncertainty sources. However, they suffer from very slow convergence rates, making them computationally expensive, especially for high-accuracy demands. SC methods improve upon this by offering faster convergence than MC and allowing the use of existing deterministic solvers without modification. Yet, SC methods are still limited by the curse of dimensionality, with computational cost growing rapidly as the number of uncertain parameters increases. 
\revTwo{
Here, we focus on the SG approach that allows for the simultaneous resolution of the physical and stochastic variables and provides a systematic framework for propagating uncertainty in the numerical solution.
}
While SG leads to larger coupled systems and typically requires more significant adaptations to existing solvers, it is well-suited for the development of structure-preserving, high-order schemes. In this context of hyperbolic equations, the SG approach was used  for scalar hyperbolic balance laws in~\cite{Pettersson_etal_09, Pettersson_16, oeffner20218stability, jin2016}. The treatment of hyperbolic systems of equations is much more involved due to the fact that a straightforward application of the SG approach may result in the loss of hyperbolicity and therefore to ill-posed problems \cite{despres2013}. Numerous recent efforts have been made to address this issue. Among those familiar to the authors are the following approaches:
\begin{enumerate}
    \item Transformation to Roe variables combined with Haar wavelets~\cite{Pettersson_etal_14, bender_2023, Gerster_Herty_20, Gerster_etal_22, pettersson2015polynomial},
    \item Limiting strategies~\cite{zbMATH07079486, duerrwachter2020},
    \item Filtering techniques~\cite{kusch2020filtered},
    \item Linearization methods~\cite{wu2017},
    \item Entropic variable representations~\cite{Poette_etal_09},
    \item Projection techniques together with high-order quadrature rules and time-step restrictions~\cite{dai_etal_2021, dai2023energy, Dai_etal_22, epshteyn2024energy}.
\end{enumerate}
Here, we will focus in particular on the approaches number one and six. 

 An entropy flux pair for the SG-SW equations using Roe variable transformation proposed in~\cite{Gerster_Herty_20} was picked up in \cite{bender_2023} to build an entropy-conservative  scheme for the SG-SW equation by constructing entropy-conservative two-point fluxes in the sense of Tadmor \cite{Tadmor_2003entropy}
and applied it in a flux-differencing ansatz within a discontinuous Galerkin spectral element method (DGSEM) framework \cite{chen2020review}.  An approach based on specific projection techniques in the SG ansatz together with high-order quadrature rules in the stochastic space, but with an additional time-step restriction was used in~\cite{dai_etal_2021,dai2023energy},
including an entropy pair and second-order entropy-conservative/dissipative schemes in one-space dimension \cite{dai2023energy}, which was later extended to two space-dimensions \cite{epshteyn2024energy}. Additionally, for constructing an entropy stable FV scheme, the approach from \cite{fjordholm2011} was adapted \cite{dai2023energy, epshteyn2024energy} which is also based on the application of entropy-conservative two-point fluxes \cite{Tadmor_2003entropy}. 
It appears to be unclear how the Roe variables formulation in~\cite{Gerster_Herty_20} is connected to the approach in \cite{dai2023energy}. However, we demonstrate in the following manuscript that, under certain assumptions on the basis functions, the approaches in \cite{Gerster_Herty_20, bender_2023} and \cite{dai2023energy} are \textbf{equivalent}.
Under these assumptions on the basis functions, the entropy-flux pairs used in \cite{dai2023energy} and \cite{bender_2023} coincide and we demonstrate the connection between both approaches.

The aim of this paper is to close open gaps in the community and clarify the connection. Additionally to this theoretical investigation, we extend the second-order entropy-conservative/dissipative method in~\cite{epshteyn2024energy} to arbitrarily high-order using the DG framework.
\revOne{So, the present work does not introduce a new stochastic approximation basis or stochastic 
projection methodology. 
The stochastic Galerkin representation employed herein follows established formulations using Haar wavelets from \cite{Gerster_Herty_20, Gerster_etal_22}.
We develop a high-order, entropy stable DG spatial discretization that is provably well-balanced.
If a scheme is not well-balanced, then spurious waves on the order of the mesh spacing can be created and propagate throughout the domain. These artificial waves are indistinguishable from physical waves and, in the worst case, may lead to breakdown of the solution.
The present work
serves to unify existing SG formulations and restrict the verification study to physical-space convergence.
However, the numerical testing of the well-balanced property and solution quality and behavior in the presence of discontinuities is considered at different resolutions in both the stochastic and physical basis expansions.
This baseline entropy stable and high-order method could be extended to novel shock-capturing or limiting strategies, but this is left as future work.}

%Therefore, 
The paper is organized as follows: 
In Section \ref{se_se}, we explain the spectral expansion in random variables, introduce the stochastic Galerkin projection and explain the influence of the different bases on the properties of the spectral Galerkin projections. Next, in Section \ref{se_SGSW} we introduce the stochastic Galerkin shallow water equations following \cite{dai_etal_2021} and \cite{Gerster_Herty_20} respectively, then in subsection \ref{subsec_relation}, we point out the relation between the different SG-SW formulations and demonstrate their  equivalence if an adequate basis is used. In Section  \ref{sec_DGmethod}, we introduce the entropy-conservative DGSEM scheme and compare  and discuss the entropy-conservative fluxes developed in \cite{bender_2023} and \cite{dai2023energy}. Here, we shortly explain the general procedure which we use to construct a two-point flux for the SG-SW formulation in two space dimension which differs however from the version proposed in \cite{epshteyn2024energy}. Afterwards, in Section \ref{se_numerics} we demonstrate in a proof of concept all the derived properties by several numerical examples.
A summary and outlook finish the manuscript where in the Appendix \ref{sec_Appendix} the lengthy  calculations and proofs can be found for the main theorems together with  a list of notation for completeness.

\input{Spectral_expansions}

\input{SW}

\input{DG}

\input{Numerical_results}

\section{Conclusions}

We have  provided a detailed comparative study of two recent stochastic Galerkin approaches for the shallow water equations, highlighting both their shared theoretical foundations and their methodological differences, particularly in the choice of stochastic basis functions and numerical flux formulations. We demonstrate that under specific basis assumptions, the Roe-variable-based and specific project-based formulations can be reconciled, offering a unified framework for stochastic Galerkin methods. Building on this insight, we developed a high-order entropy-conservative (dissipative)  DG method to two-dimensional problems, introducing flexible flux constructions and high-order approximations that maintain entropy conservation and well-balanced properties. Our numerical experiments confirm the theoretical developments and offer practical guidance for solving geophysical fluid flows under uncertainty, providing valuable tools for both the numerical PDE and UQ communities.

In future work, we aim to address several challenges. First, in the stochastic Galerkin approach with many random variables, most coefficients in the flux matrix have little or no impact on the expected value. We plan to integrate our methods with adaptive techniques to eliminate contributions below a certain threshold, drawing connections to model order reduction approaches. Additionally, we will incorporate limiting strategies in the stochastic setting to suppress spurious oscillations. 
\revOne{Related to shock capturing is the issue of positivity as the water height goes to zero near the bottom topography, i.e., wetting and drying.
The Haar wavelet stochastic basis are positive and free of overshoots; however, the high-order polynomial basis in the physical space is not.
Gibbs phenomena may lead to negative water heights and unstable solution behavior.
To create a shock capturing method with wetting and drying capabilities may require careful limiter design within the time stepping algorithm or in the projection step between physical and stochastic space.}
Finally, investigating well-balancing for moving equilibria remains an important and meaningful direction for further study.

\printcredits

\section*{Data Availability}
A reproducibility repository with necessary instructions and code to reproduce the presented numerical results are available in \cite{oeffner2026numericalRepro}.
	
\section*{Declaration of competing interest}
The authors declare that they have no known competing financial interests or personal relationships that could have appeared to influence the work reported in this paper.
	
\section*{Acknowledgments}
    The work of Philipp \"Offner was supported by the German Research Foundation (DFG) within the framework of the priority research program SPP 2410 under the grant  project OE 661/5-1 (525866748) and under the personal grant  OE 661/4-1(520756621). 
    Andrew Winters thanks Henry Haase for his help optimizing the implementation of the stochastic Galerkin in the Trixi.jl framework.

\appendix
\input{Appendix}
  
\bibliographystyle{plain}
\bibliography{bibliography.bib}

\end{document}

%% file: Spectral_expansions.tex
\section{Spectral expansions in random variables}\label{se_se}

Random functions with finite variance can be represented as series expansions in independent random variables with known distributions, which is the basis for the polynomial chaos framework~\cite{Ghanem_Spanos_91} and a wide variety of subsequent methods. In this way, unknown functions of known random variables, e.g., PDE solutions with random input parameters, can be efficiently parameterized and eventually computed to good accuracy. Consider a vector of random variables $\xi = (\xi_1, \dots, \xi_{n}) \in \mathbb{R}^{n}$ with product-type probability density function (PDF) $\rho(\xi)=\Pi_{i=1}^{n} \rho_{i}(\xi_{i})$. 
We will consider finite-variance functions of $\xi$, i.e., functions on the 
space 
\begin{equation}
\label{eq:wtd_space}
L^2_{\rho} = \left\{ z: \mathbb{R}^{n} \mapsto \mathbb{R} \,\middle|\, \int_{\mathbb{R}^n} z(s)^2 \rho(s)\textup{d}s < \infty \right\},
\end{equation}
with the associated inner product
\[
\left\langle y, z\right\rangle \equiv \int_{\mathbb{R}^{n}} y(s)z(s) \rho(s)\text{d}s.
\]
Furthermore, let $\{ \psi_{k}(\xi)\}_{k=1}^{\infty}$ be an orthonormal basis in $\xi$ on the space~\eqref{eq:wtd_space},
i.e.,
\[
\left\langle \psi_{k}(\xi) , \psi_{l}(\xi)\right\rangle = \delta_{kl}, \quad \delta_{kl}=\left\{
\begin{array}{ll}
1 & k=l\\
0 & k \neq l
\end{array}
\right. , \quad \forall k,l \in \mathbb{N},
\]
where $\delta_{kl}$ denotes the Kronecker delta. Any stochastic function $v \in L^2_{\rho}$, e.g., a PDE solution where input uncertainties are parameterized as functions of $\xi$, can be represented by
a series expansion in $\{ \psi_{k}(\xi)\}_{k=1}^{\infty}$, 
\[
v(\xi) = \sum_{k=1}^{\infty} \hat{v}_k \psi_{k}(\xi),
\]
with deterministic coefficients $\{ \hat{v}_k \}_{k=1}^{\infty}$ and where the identity holds in the $L^2_{\rho}$ sense. 
 Common choices of basis functions include classical orthogonal polynomials from the Askey scheme~\cite{Askey_Wilson_85}, as introduced in the seminal work~\cite{Xiu_Karniadakis_02} and since then widely used in several applications \cite{review}. Orthogonal polynomials have attractive approximation properties and are efficient in representing random variables with distributions being similar (in the sense being close to linear) to the assumed distributions of the random variables $\xi$. Orthogonal polynomials are however not suitable for nonsmooth functions, due to, e.g., Gibbs oscillations close to discontinuities~\cite{Poette_etal_09}. For these problems, basis functions that are localized in space have been proposed, e.g., multi-element generalized polynomial chaos that consists of classical orthogonal polynomials restricted to subsets of the stochastic domain~\cite{Wan_Karniadakis_05}. Wavelets provide robust and localized hierarchical bases using piecewise linear Wiener-Haar expansions~\cite{LeMaitre_etal_04a} or piecewise polynomial multi-wavelets~\cite{LeMaitre_etal_04b}.

 \begin{figure}[ht]
    \centering 
\includegraphics[width=0.98\textwidth]{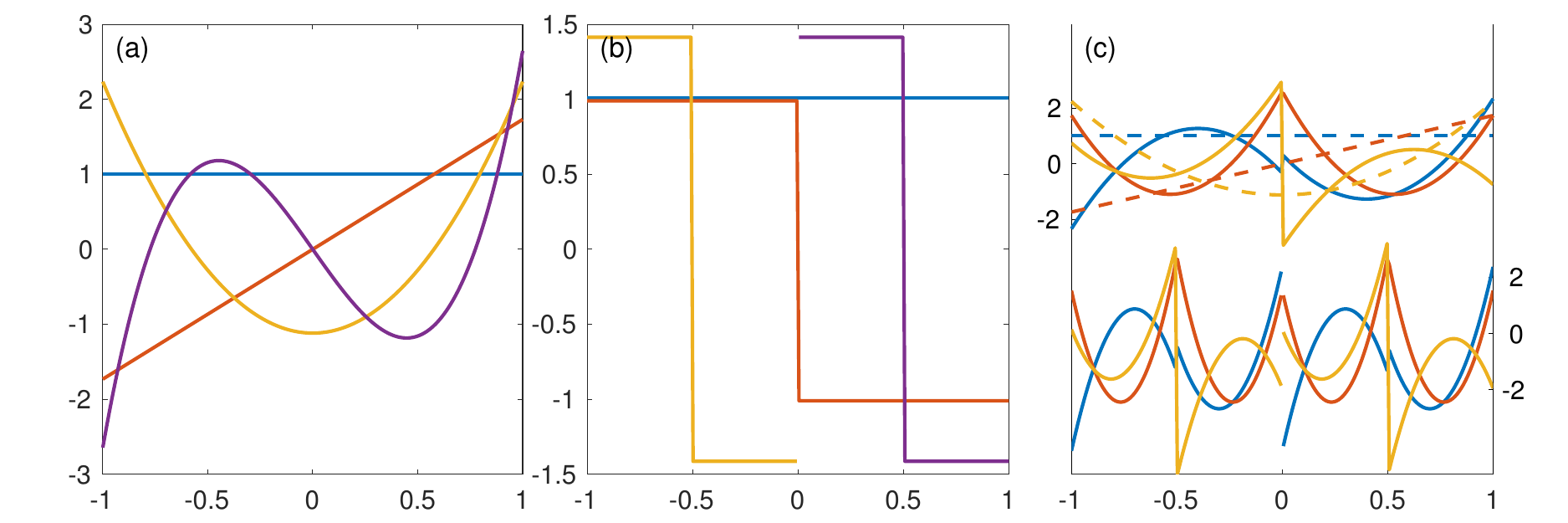}
\caption{The first few Legendre polynomials (a), Haar wavelets (b), and piecewise quadratic multi-wavelets (c).}
\label{fig:Lege_vs_Haar_exp}
\end{figure}

In many situations, one wants to approximate an infinite-dimensional random field which for practical purposes requires the truncation of both the number of random dimensions to a finite number $n$, and truncation of the basis to a finite number of functions, denoted $K$. For a random function with spatial and temporal dependence, we consider the truncated spectral expansion
\begin{equation}
\label{eq:trunc_sp_exp}
v(x,t,\xi) = \sum_{k=1}^{K} \hat{v}_k(x,t) \psi_{k}(\xi),
\end{equation}
where $x=(x_1,\dots, x_d)\in \mathbb{R}^{d}$ denotes spatial coordinates in $d$ dimensions, and $t$ denotes time.
In polynomial bases, the number $K$ depends on the method of truncation, e.g., with polynomial order $N_p$, $K=(N_p+1)^n$ for a tensor basis, and $K=(N_p + n)!/(N_p! n!)$ for a total-order basis where all multivaraiate polynomials with at most and including total degree $N_p$ are included. An even more truncated basis suitable for problems with limited contributions from mixed polynomials is the
hyperbolic index set~\cite{Blatman_Sudret_11}.

The number of basis functions $K$ in a univariate expansion in multi-wavelets is $K=(N_{p}+1) 2^{N_r}$, where $N_p$ is the piecewise polynomial order and $N_r$ is the number of resolution levels. In a multivariate expansion in $n$ random dimensions, the number of basis functions is $K=(N_{p}+1) 2^{N_r n}$, assuming a tensor basis with the same order of resolution levels and polynomial order in all dimensions. The tensor basis grows quickly with the number of dimensions and resolution levels. With other choices, e.g., total order bases, fewer functions are included in the truncated basis, but they may lead to representations that fail to maintain physically relevant properties~\cite{Shaw_etal_20} as well as attractive numerical properties, e.g., semi-analytical eigenvalue decompositions.

\subsection{Stochastic Galerkin projection}
Projection of a physical model with random inputs onto a truncated basis leads to a finite and deterministic set of mathematical expressions for the spectral expansion coefficients. In essentially all nonlinear and many linear models, products between random objects need to be appropriately represented.
For efficient projection onto the stochastic basis, it will be useful to define the stochastic Galerkin matrix $\M(\hat{v})$ that takes as argument a vector of spectral coefficients $\hat{v} \in \mathbb{R}^K$ and is defined by 
\begin{equation}
\label{eq:SG_matrix}
[\M(\hat{v})]_{ij} \equiv \sum_{k=1}^{K}\hat{v}_{k}\langle \psi_{i}\psi_{j},\psi_{k} \rangle 
\mbox{ for } i,j=1,\dots, K,\quad  \forall \hat{v} \in \mathbb{R}^{K}.
\end{equation}
With a tensor product basis, we have that
\begin{equation}
\label{eq:SG_matrix_tens_struct}
\M = \M_{1} \otimes \M_{2} \otimes \dots \otimes \M_{n},
\end{equation}
where $\M_m$ ($m=1,\dots, n$) denotes the univariate stochastic Galerkin matrix in random variable $m$. Typically, attractive properties of the matrices $\M_{m}$ can also be shown to hold for the full matrix $\M$. For all $\hat{v},\hat{w} \in \mathbb{R}^{K}$, it holds that
\begin{equation}
\label{eq:SG-vec-comm}
\M(\hat{v})\hat{w} = \M(\hat{w})\hat{v}. 
\end{equation}

\subsection{Properties of different bases}\label{subsec_pro}
Orthogonal polynomials bases, e.g., Hermite or Legendre polynomials, can lead to unphysical properties in the truncated expansions of certain functions. Examples include oscillations around a discontinuity in stochastic space~\cite{Poette_etal_09}, and the representation of a lognormal diffusion coefficient in Hermite polynomials, unless an extended basis is employed~\cite{Pettersson_etal_13}. Next, we make these observation more concrete in the current setting of a nonlinear hyperbolic PDE, and show that orthogonal polynomials can lead to failure in a numerical solver already during the initial projection, or at a later time even if all original projections remain physical. We do not prove that a Haar wavelet basis will always converge, but we do show that it is not subject to the same pitfalls demonstrated for the orthogonal polynomial basis. 

Figure~\ref{fig:Lege_vs_Haar_exp} shows approximation in Legendre polynomials of functions which should be nonnegative for $\xi \in [-1, 1]$ but fails due to non-smooth features. In contrast, Haar wavelets never yields over- or undershoots in the sense that any level of projection of any function will not yield extreme values beyond those of the function itself. To prove this claim, let $f_{\text{Haar}}$ be the Haar wavelet projection of any $L_2$ function $f(\xi)$ for $\xi \in U[-1,1]$. For any fixed $\xi'$, $f_{\text{Haar}}(\xi')$ is constant in a neighborhood of $\xi'$, denoted $D'$, i.e., $f_{\text{Haar}}(\xi) = f_{\text{Haar}}^{D'}$ for all $\xi \in D'$. By construction, $
f_{\text{Haar}}^{D'} = \frac{1}{|D'|} \int_{D'} f(s)\textup{d}s .
$
Also,
\[
\min_{\xi \in D'}f(\xi) \leq
\frac{1}{|D'|} \int_{D'} f(s)\textup{d}s 
\leq \max_{\xi \in D'}f(\xi). 
\]
Hence, $f_{\text{Haar}}(\xi)$ cannot over- or undershoot the true value $f(\xi)$ on $D'$, and since $\xi'$ (and hence $D'$) was arbitrary, it cannot happen anywhere in the range of $\xi$.
This shows that the projection of, e.g., initial and boundary conditions or material parameters cannot lead to unphysical values with a Haar basis. As an illustration, Figure~\ref{fig:Lege_vs_Haar_exp_v2} (a) and (b) shows approximations of the functions $f_1, f_2$ given by
\begin{equation}
\label{eq:example_fcns_gPC}
f_1(\xi) = 1-|\xi|, \quad f_2(\xi)=\left\{
\begin{array}{ll} 
1 & \mbox{if } \xi \leq 0\\
0.02 & \mbox{if } \xi > 0
\end{array}\right..
\end{equation}
Due to the low regularity of $f_1$ and $f_2$, approximation with Legendre polynomials leads to values outside the range of the true functions in both cases. The Haar wavelet approximations do not suffer from this phenomenon but the results show that unless the target function is piecewise constant with discontinuities coinciding with the discontinuities of the basis functions, the convergence can be slow.

\begin{figure}[ht]
    \centering 
\includegraphics[width=0.98\textwidth]{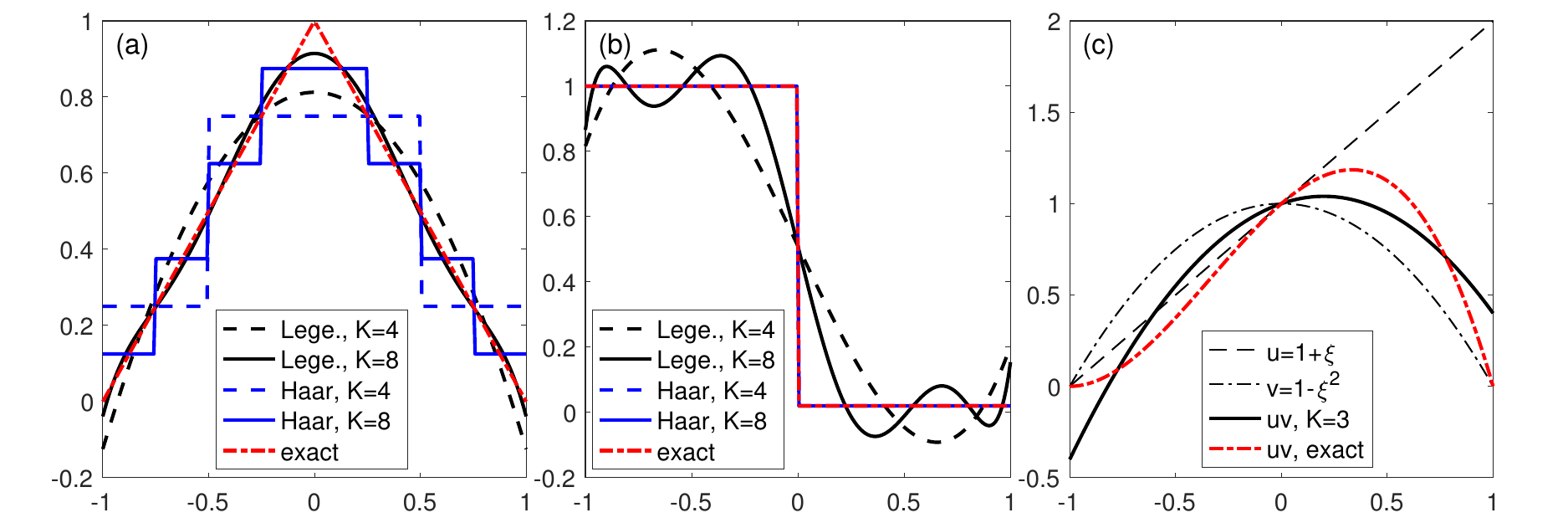}
\caption{Legendre polynomial and Haar wavelet reconstruction of the nonsmooth functions $f_1$ (a), $f_2$ (b). The factors $(1+\xi)$ and $(1-\xi^2)$ can be exactly represented with $K=3$ Legendre polynomials, but the product $(1+\xi)(1-\xi^2)$ projected onto the same basis assumes negative values with non-zero probability (c).}
\label{fig:Lege_vs_Haar_exp_v2}
\end{figure}
Even in the case where a polynomial basis can exactly represent for instance initial conditions, unphysical behavior may be introduced over time. An example is shown in Figure~\ref{fig:Lege_vs_Haar_exp_v2}~(c), where the functions $u=1+\xi$ and $v=1-\xi^2$ can be exactly represented by a $K=3$ order expansion in Legendre polynomials. However, the product $u(\xi)v(\xi)$ cannot be exactly represented by a $K=3$ Legendre expansion, leading to negative values of a pseudospectral product of otherwise non-negative factors.
Next, we will show that unphysical features in the sense of values beyond the ranges of the true function can also not emerge over time in a stochastic Galerkin setting with Haar wavelet bases, assuming that all fluxes can be obtained by products of random variables. Consider two arbitrary Haar wavelet projections $u_{\text{Haar}}$ and $v_{\text{Haar}}$ that are within physical bounds of some underlying exact functions $u(\xi)$ and $v(\xi)$ (induction hypothesis). Hence the product $u_{\text{haar}} v_{\text{haar}}$ must be within the physical bounds of $u(\xi) v(\xi)$. Moreover, it holds pointwise for the product that $u_{\text{Haar}} v_{\text{Haar}} = (uv)_{\text{Haar}}$, so there is no projection error. Hence, $(uv)_{\text{Haar}}$ will also be within the physical bounds. \revTwo{The preceding theoretical analysis serves to motivate the choice of Haar-type bases among other things as a means to isolate spurious oscillations that may otherwise stem from both the numerical scheme and the choice of polynomial basis, and will hence be relevant in the numerical results.}

If the stochastic Galerkin matrix $\M(\cdot)$ has an eigenvalue decomposition with constant eigenvectors, as is the case with Haar wavelets and piecewise linear multi-wavelets~\cite{Pettersson_etal_14, pettersson2015polynomial}, the hyperbolic SG PDE systems often become amenable to theoretical analysis, as demonstrated in, e.g.,~\cite{pettersson2015polynomial,Gerster_Herty_20,Gerster_etal_22}. The following property and the subsequent identities will be central in the proofs of the current paper.

\noindent
\textbf{Property 1:} The stochastic Galerkin matrix $\M(\cdot)$ defined by~\eqref{eq:SG_matrix} has an eigenvalue decomposition with constant eigenvectors $V \in \mathbb{R}^{K \times K}$, i.e.,
\begin{equation}
\label{eq:basis_P1}
\M(\hat{v}) = V \Lambda_{\hat{v}} V^T, \quad \forall \hat{v} \in \mathbb{R}^{K}.
\tag{P1}
\end{equation}
The following properties hold for bases for which~\eqref{eq:basis_P1} is true, but typically not for other bases. 
\begin{lemma}
For any basis satisfying~\eqref{eq:basis_P1}, the following is true for all $\hat{v},\hat{w} \in \mathbb{R}^{K}$: 
\begin{itemize}
\item[(i)] The matrices $\M(\hat{v})$ and $\M(\hat{w})$ commute.
\item[(ii)] $\M(\M(\hat{v})\hat{w})  = \M(\hat{v}) \M(\hat{w})$.
\item[(iii)] \revTwo{$\lambda_j(\hat{v})\neq 0$, $j=1,\dots, K$, then} $\M^{-1}(\hat{v}) = \M(\hat{v}^{-1})$ where $\hat{v}^{-1} \equiv \M^{-1}(\hat{v})e_1=V \Lambda_{\hat{v}}^{-1}V^T e_1$.
\item[(iv)] 
\revTwo{If $\lambda_j(\hat{v}) > 0$, $j=1,\dots, K$, then} $\hat{v}^{1/2} = V \Lambda_{\hat{v}}^{-1/2} V^T \hat{v}$.
\end{itemize}
\end{lemma}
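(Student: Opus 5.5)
The plan rests on three elementary facts, which I would establish first. (a) $\M(\cdot)$ is linear in its argument and each $\M(\hat v)$ is symmetric, so in \eqref{eq:basis_P1} we may take $V$ orthogonal, $V^TV=I$. (b) The basis contains the constant function $\psi_1\equiv 1$, as is the case for Haar wavelets and the bases considered here. Hence $[\M(\hat v)]_{i1}=\sum_k \hat v_k\langle \psi_i,\psi_k\rangle=\hat v_i$, i.e.\ $\M(\hat v)e_1=\hat v$. In particular $\M(e_1)=I$, since $\langle\psi_i\psi_j,\psi_1\rangle=\delta_{ij}$. (c) The commutation identity \eqref{eq:SG-vec-comm}. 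The standing assumption $\psi_1\equiv 1$ is not written in the statement, so I would state it explicitly.

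For (i), both matrices are diagonalized by the same $V$: $\M(\hat v)\M(\hat w)=V\Lambda_{\hat v}\Lambda_{\hat w}V^T=\M(\hat w)\M(\hat v)$.

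For (ii), I would test both sides on an arbitrary $\hat x\in\mathbb{R}^K$. Applying \eqref{eq:SG-vec-comm} twice and (i) once gives
\[
\M(\M(\hat v)\hat w)\hat x=\M(\hat x)\M(\hat v)\hat w=\M(\hat v)\M(\hat x)\hat w=\M(\hat v)\M(\hat w)\hat x .
\]
Since $\hat x$ is arbitrary, the matrices agree. Iterating (ii) gives $\M(\hat v)^m=\M(\M(\hat v)^{m-1}\hat v)$ for all $m\ge 1$. Together with linearity and $\M(e_1)=I$, this shows that for every polynomial $p$,
\[
p(\M(\hat v))=\M\bigl(p(\M(\hat v))e_1\bigr).
\]
This is the key tool for (iii) and (iv).

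For (iii), if all $\lambda_j(\hat v)\neq0$, I choose $p$ interpolating $1/\lambda$ on the finitely many eigenvalues. Then $p(\M(\hat v))=V\Lambda_{\hat v}^{-1}V^T=\M^{-1}(\hat v)$, and the identity above gives $\M^{-1}(\hat v)=\M(\M^{-1}(\hat v)e_1)=\M(\hat v^{-1})$. Alternatively, one checks directly that $\M(\hat v)\M(\hat v^{-1})=\M(\M(\hat v)\hat v^{-1})=\M(e_1)=I$.

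For (iv), the main obstacle is interpretive: the statement must be read as saying that the vector $\hat s:=V\Lambda_{\hat v}^{-1/2}V^T\hat v$ is the SG square root of $\hat v$, meaning $\M(\hat s)\hat s=\hat v$ or $\M(\hat s)^2=\M(\hat v)$. Using $\hat v=\M(\hat v)e_1=V\Lambda_{\hat v}V^Te_1$, I rewrite $\hat s=V\Lambda_{\hat v}^{1/2}V^Te_1=\M(\hat v)^{1/2}e_1$, with the positive square root available since all $\lambda_j(\hat v)>0$. Choosing $p$ to interpolate $\sqrt{\lambda}$ on the spectrum, the polynomial identity yields
\[
\M(\hat s)=\M(\hat v)^{1/2}.
\]
Hence $\M(\hat s)\hat s=\M(\hat v)^{1/2}\M(\hat v)^{1/2}e_1=\M(\hat v)e_1=\hat v$, and $\M(\hat s)$ is symmetric positive definite, so this square root is the natural (positive) choice. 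The polynomial-interpolation device is what avoids dividing by the entries of $V^Te_1$, which is the naive route to identifying $\Lambda_{\hat s}$ and would require those entries to be nonzero.
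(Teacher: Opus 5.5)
Your proof is correct, and it differs from the paper's in two places.

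\textbf{Parts (i)--(iii).} The paper only cites (i) and (ii) (from Gerster--Herty and Gerster et al.). You instead prove (ii) directly, by testing on an arbitrary $\hat x$ with \eqref{eq:SG-vec-comm} and (i). Your second argument for (iii) is exactly the paper's. You are also right to state the hypothesis $\psi_1\equiv 1$ explicitly, since the paper uses it tacitly through $\M(e_1)=I$.

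\textbf{Part (iv).} This is where the routes genuinely differ. The paper argues by necessity. It presupposes a solution of $\M(\hat v^{1/2})\hat v^{1/2}=\hat v$; existence is simply assumed, cf.\ the footnote accompanying \eqref{eq_Roe_transform}. It then deduces $\M(\hat v^{1/2})^2=\M(\hat v)$ from (ii), selects the positive roots $\Lambda_{\hat v^{1/2}}=\Lambda_{\hat v}^{1/2}$, and solves $V\Lambda_{\hat v}^{1/2}V^T\hat v^{1/2}=\hat v$ for $\hat v^{1/2}$. Your identity $p(\M(\hat v))=\M(p(\M(\hat v))e_1)$ gives the converse: the formula really does produce a square root. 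So under \eqref{eq:basis_P1} the existence assumption is superfluous.

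To identify your $\hat s$ with \emph{the} square root meant by the notation, you should still add the paper's direction. In your setting this costs one line: if $\M(\hat s')\hat s'=\hat v$ with $\M(\hat s')$ positive definite, then $\M(\hat s')^2=\M(\hat v)$ by (ii). Hence $\M(\hat s')=\M(\hat v)^{1/2}$ and $\hat s'=\M(\hat s')e_1=\hat s$. Without the positivity selection there are exactly $2^K$ solutions $V\Sigma\Lambda_{\hat v}^{1/2}V^Te_1$, with $\Sigma$ a diagonal sign matrix.

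\textbf{Your closing caveat.} It is unnecessary: no entry of $V^Te_1$ can vanish. Indeed $V^T\hat v=\Lambda_{\hat v}V^Te_1$ for every $\hat v$, and $V^T$ is onto. Put differently, $\hat v\mapsto\M(\hat v)$ is injective (as $\M(\hat v)e_1=\hat v$). Its values lie in the $K$-dimensional space $\{VDV^T:\ D\ \text{diagonal}\}$, so it is onto that space, giving $VDV^T=\M(VDV^Te_1)$ for every diagonal $D$. This yields (ii)--(iv) at once and makes the interpolation device optional.
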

\begin{proof}
Property (i) is proven in~\cite[Lemma 4.1]{Gerster_Herty_20}, and property (ii) in~\cite{Gerster_etal_22}. For (iii), $\M(\hat{v})\hat{v}^{-1}=e_1$ by definition, hence $\M(\M(\hat{v})\hat{v}^{-1}) = \M(\hat{v})\M(\hat{v}^{-1}) = \M(e_1)=I$, from which follows that $\M^{-1}(\hat{v}) = \M(\hat{v}^{-1})$.  For Property (iv), we have that $\hat{v}^{1/2}$ must satisfy $\M(\hat{v}^{1/2})\hat{v}^{1/2}=\hat{v}$, hence 
$\M(\hat{v}^{1/2})\M(\hat{v}^{1/2})=\M(\hat{v})$. Due to the orthogonality of the constant eigenvectors, it follows the last identity that the unknown eigenvalues of $\M(\hat{v})$ satisfy $(\Lambda_{\hat{v}^{1/2}})^{2} = \Lambda_{\hat{v}}$, assuming that $\lambda_j(\hat{v})> 0$ for all $j=1,\dots, K$. Again using orthogonality of the eigenvectors, it follows from $\M(\hat{v}^{1/2})\hat{v}^{1/2} = V\Lambda_{\hat{v}}^{1/2}V^T \hat{v}^{1/2} = \hat{v}$ that $\hat{v}^{1/2}= V \Lambda_{\hat{v}}^{-1/2} V^T \hat{v}$.
\end{proof}

%% file: SW.tex
\section{Stochastic Galerkin shallow water equations}\label{se_SGSW}
In this work, the model under consideration is the SW equations, also known as Saint-Venant equations, which characterize the behavior of hydrostatic free surface waves influenced by gravity. The SW equations are used to describe tsunamis or urban flood behavior for example. The SW system can be derived from the Navier-Stokes equations and the deterministic system is defined in one space dimension with coordinate $x$ via 
\begin{equation}\label{eq_SW_pure}
\frac{\partial}{\partial t} \begin{pmatrix}
h \\
q
\end{pmatrix} + \frac{\partial}{\partial x} \begin{pmatrix}
q \\
q^2/h + \frac{1}{2} gh^2
\end{pmatrix}
= \begin{pmatrix}
0 \\
-gh \frac{\partial}{\partial x}b 
\end{pmatrix},
\end{equation} 
where $h=h(x,t)$ denotes water height, $q=q(x,t)$ the  discharge, $g$ the gravitational constant and $b=b(x)$ the bottom topography. In addition, $q\equiv h v$ with velocity $v$. 
The first equation in \eqref{eq_SW_pure} represents the mass conservation, while the second equation represents the momentum balance. The right-hand side of \eqref{eq_SW_pure} represents the source terms. We assume a frictionless environment with time-independent bottom topography. As long as the height of the water is nonnegative, the system \eqref{eq_SW_pure} belongs to the class of hyperbolic balance laws. The conserved vector is given by $u\equiv(h, q)^T$.

As described in Section~\ref{sec_intro},
it is reasonable to address~\eqref{eq_SW_pure} within a framework that accounts for uncertainties. Therefore, in our context the quantities $u$ and $b$ depend not only on space and time, but also on the vector of random variables $\xi$ which results in the stochastic shallow water equations, e.g. $u= u(x,t, \xi)$ and $b=b(x,t, \xi)$. We apply the generalized polynomial chaos approach as introduced in Section \ref{se_se}.
By using a truncated spectral expansion \eqref{eq:trunc_sp_exp} and a stochastic Galerkin approach via basis functions $\psi_j$, the stochastic SW equations are re-written as a purely deterministic system of equations where the spectral coefficients $\hu$ have to be calculated. 
However, the properties of the system highly depend on the selected basis and the projection used. As established in \cite{pettersson2015polynomial}, employing orthogonal polynomials from the Askey scheme through a classical projection method can result in a loss of hyperbolicity
and several works can be found in the literature \cite{dai_etal_2021,zbMATH07079486,Pettersson_etal_14} where this problem is addressed. 
The first approach to solve this issue has been presented up-to-the-authors knowledge in \cite{Pettersson_etal_14} by using the Roe variables and the Wiener-Haar expansion instead. 
As described in Subsection \ref{subsec_pro} the heuristic  explanation for this is that using orthogonal polynomials does not ensure the physical properties in the truncated expansion of certain function. In the context of shallow water, the non-negativity of the water height can not be ensured if orthogonal polynomials have been used, whereas with the Haar-wavelets this can be guaranteed, cf. Figure \ref{fig:Lege_vs_Haar_exp_v2}. 
Meanwhile, in \cite{dai_etal_2021} another hyperbolicity-preserving 
approach has been presented and further applied in \cite{Dai_etal_22,dai2023energy}. Here, the idea is to select carefully the polynomial chaos expansion in conserved variables to avoid issues with the positivity of the water height. In our manuscript, we compare both approaches, highlighting their common features and interrelationships. This comparison should enhance understanding of the differences and assist researchers in future applications.
Therefore, we shortly introduce the two considered ideas. 

\subsection{Hyperbolic preserving SG-SW formulation}
Using the same approach as in~\cite{Dai_etal_22} to perform stochastic Galerkin projection of~\eqref{eq_SW_pure} with the approximation $\M(\hq)\M^{-1}(\hh)\hq$ of the nonlinear flux term $q^2/h$, leads to the hyperbolic system
\begin{equation}
\label{eq:SG_SWE_cons} 
\frac{\partial}{\partial t}
\begin{pmatrix}
\hh\\
\hq
\end{pmatrix}
+
\frac{\partial}{\partial x}
\begin{pmatrix}
\hq\\
\M(\hq) \M^{-1}(\hh)  \hq 
+ \frac{1}{2}g  \M(\hh)  \hh
\end{pmatrix}
=
\begin{pmatrix}
\hat{0} \\
-g \M(\hh) \frac{\partial}{\partial x} \hat{b}
\end{pmatrix},
\end{equation}
with the flux Jacobian
\begin{equation}
\label{eq:SG_Jacobian_cons}
J_{\text{C}}(\hh,\hq) = \begin{pmatrix}
O_{K} & I_{K} \\
g\M(\hh)-\M(\hq)\M^{-1}(\hh)\M(\M^{-1}(\hh)\hq) & \M(\hq)\M^{-1}(\hh
) + \M(\M^{-1}(\hh)\hq)
\end{pmatrix},
\end{equation}
where $O_{K}$ is the zero matrix and $I_{K}$ is the identity matrix, both of size $K \times K$. Here, $\hat{\bullet}$ denotes again the $K$-length coefficient vector and the derivatives are used componentwise. 
We rewrite~\eqref{eq:SG_SWE_cons} using $\hu= \begin{pmatrix}
\hh\\
\hq
\end{pmatrix}$, $\hat{F}= \begin{pmatrix}
\hq\\
\M(\hq) \M^{-1}(\hh)  \hq 
+ \frac{1}{2}g  \M(\hh)  \hh
\end{pmatrix} $ and $\hat{S}= \begin{pmatrix}
\hat{0} \\
-g \M(\hh) \frac{\partial}{\partial x} \hat{b}
\end{pmatrix} $
 via 
\begin{equation}\label{eq:SG_SWE_cons_2}
\frac{\partial}{\partial t}
\hu
+
\frac{\partial}{\partial x}
\hat{F}
=
\hat{S}
\end{equation}
which is a strictly deterministic hyperbolic balance law. As presented in~\cite{dai2023energy}, the SG-SW formulation~\eqref{eq:SG_SWE_cons} is equipped with an entropy-entropy flux pair $(\hat{\eta}, \hat{H})$ defined by 
\begin{equation}\label{eq_ent_flux_dai}
\begin{aligned}
\hat{\eta}(\hh,\hq) &\equiv \frac{1}{2}\left( \hq^{T}\M^{-1}(\hh)\hq + g \hh^{T}\hh\right) + g\hh^{T}\hat{b},\\
\hat{H}(\hh,\hq) &\equiv \frac{1}{2} \hv^{T}\M(\hq) \hv + g\hq^{T}\hh + g\hq^{T}\hat{b},  
\end{aligned}
\end{equation} 
with $\hv= \M^{-1}(\hh) \hq$.
As demonstrated in detail in~\cite[Remark 3.1]{dai2023energy}, the entropy variables are given by 
\begin{equation}\label{eq:entropy_variable_Dai}
\hat{w}= \left(\frac{\partial \hat{\eta}}{\partial \hu} \right)^T= \left( -\frac{1}{2}\hv^T\M(\hv)+g (\hh+\hat{b})^T, \hv^T \right),
\end{equation}
and the entropy potential is given by
\begin{equation}\label{eq_potential}
\Phi= \hat{w} \hat{F }- \hat{H} = \frac{1}{2} g \hv^T \M( \hh) \hh.
\end{equation}

\subsection{SG-SW formulation via Roe variable transformation}

Following the approach taken in~\cite{bender_2023,Gerster_Herty_20},
Eq.~\eqref{eq_SW_pure} can be expressed in terms of the Roe variables $\mathfrak{r}=(\alpha, \beta)$, where $\alpha=\sqrt{h}$ and $\beta=\sqrt{h} v$ with speed $v=\frac{q}{h}$. Stochastic Galerkin projection of~\eqref{eq_SW_pure} with Roe variables leads to a system of equations in the vectors of gPC coefficients $\hat{\alpha}, \hat{\beta}$. Defining $\hh^{1/2} \equiv \hat{\alpha} $ as the solution\footnote{We assume in the following that the solution always exists which from a practical point is meaningful; for a more detailed discussion, cf. \cite{Gerster_etal_22}.} to  
\begin{equation}\label{eq_Roe_transform}
\begin{aligned}
\M(\hat{\alpha})\hat{\alpha} = \hh, 
\text{ and } \hat{\beta} &\equiv \M^{-1}(\hh^{1/2})\hq,
\end{aligned}
\end{equation}
 the Roe variable stochastic Galerkin formulation of~\eqref{eq_SW_pure} in conservative variables becomes: 
\begin{equation}
\label{eq:SG_SWE_Roe}    
\frac{\partial}{\partial t}
\begin{pmatrix}
\hh\\
\hq
\end{pmatrix}
+
\frac{\partial}{\partial x}
\begin{pmatrix}
\hq\\
\M(\M^{-1}(\hh^{1/2})  \hq) \M^{-1}(\hh^{1/2})  \hq 
+ \M(\M(\hh^{1/2})  \hh^{1/2}) \M(\hh^{1/2})  \hh^{1/2}
\end{pmatrix}
=
\begin{pmatrix}
\hat{0} \\
-g \M(\hh) \frac{\partial}{\partial x} \hat{b}
\end{pmatrix},
\end{equation}

Note that contrary to the strategy in \cite{bender_2023,Gerster_Herty_20}, we express the Roe variable SG-SW equations in conservative variables. This is done to facilitate comparison with the formulation \eqref{eq:SG_SWE_cons}, and avoid lengthy transformations between the variables in the derivations presented in the manuscript.
Indeed, for the hyperbolic balance law \eqref{eq:SG_SWE_Roe}, one finds entropy-entropy flux pairs as demonstrated and described in \cite{Gerster_Herty_20, bender_2023}. They are given via 

\begin{equation}\label{eq_ent_flux_gerster}
\begin{aligned}
\hat{\eta}_{\text{Roe}}&= \frac{1}{2}\left( \hat{\alpha}^T \M(\hat{\alpha})^T (\M(\hat{\alpha}) \hat{\alpha})+ \hat{\beta}^T\hat{\beta} \right)+ g\hat{\alpha}^T \M(\hat{\alpha}) \hat{b},\\
\hat{H}_{\text{Roe}}&= \frac{1}{2} \hat{\beta}^T \M(\hat{\beta})\M^{-1}(\hat{\alpha}) \hat{\beta} + g \hat{\alpha}^T \M(\hat{\alpha})^2  \hat{\beta}+  g \hat{\beta}^T \M(\hat{\alpha}) \hat{b},
\end{aligned}
\end{equation}
where we now use the identity~\eqref{eq_Roe_transform} with corresponding entropy variables and potential. 
In the following, we will describe how the equations \eqref{eq:SG_SWE_cons} and \eqref{eq:SG_SWE_Roe} 
are connected and what is the relation between the entropy-entropy flux pairs  \eqref{eq_ent_flux_dai} and \eqref{eq_ent_flux_gerster}.
Before that, we describe two  essential properties of the SG-SW equations, which are central to this manuscript and have attracted considerable attention in recent years, namely entropy stability and the existence of non-trivial steady-state solutions. Numerical schemes designed to preserve steady states, often referred to as well-balanced schemes, are of particular interest for capturing steady solutions accurately.
The simplest and most widely studied steady-state solution is the "lake at rest"- scenario, defined as  
\begin{equation}\label{eq_well_balanced}
 v = 0, \quad h+b \equiv h^0+b^0 \in \mathbb{R}_0^+, \quad \forall x \in \Omega, \, \forall t \in [0, T].
\end{equation}
In \eqref{eq_well_balanced} the superscript refers to the initial data. 
These properties can be further translated to the SG-SW framework assumimg that all 
velocity coefficients $\hv_k$ are zero and the coefficients of the water surface remain componentwise constant, i.e. 
\begin{equation}\label{eq_well_balanced_SW}
 \hv = 0, \quad \hh_0+\hat{b}_0 \equiv \hh_{0}^0+\hat{b}_{0}^0 \in \mathbb{R}_0^+, \quad \forall x \in \Omega, \, \forall t \in [0, T].
\end{equation}
Again, in \eqref{eq_well_balanced_SW} the superscript refer to the initial data, and the subscript index refers to the zero coefficient from the spectral expansion. 

The other property which we are interested in is entropy conservation and entropy stability. It means that the solutions of the stochastic SW equations \eqref{eq:SG_SWE_cons_2} or  \eqref{eq:SG_SWE_Roe} in addition fulfills the implicit entropy (in)equality constraint, e.g. the solution $\hu$ of \eqref{eq:SG_SWE_cons_2} satisfies the (in)equality 
\begin{equation}\label{eq:ent_ineq}
\frac{\partial}{\partial t}\hat{\eta}(\hu)+\frac{\partial}{\partial x} \hat{H} \stackrel{(\leq)}{=} 0.
\end{equation}
Analogously for SG-SW equations \eqref{eq:SG_SWE_Roe} in Roe variables with their corresponding entropy variable pair \eqref{eq_ent_flux_gerster}. 
\begin{remark}
The reason to consider entropy solutions for hyperbolic conservation laws (this means the source term is equal to zero) and balance laws lie 
in the fact, that weak solutions of the system are in general not unique. Uniqueness is obtained for entropy solutions at least for scalar multi-dimensional problems and one-dimensional systems (under certain assumptions), we refer for the scalar case to \cite{kruzhkow1970} and for the system case to recent publications \cite{bressan2024,amadori2002,zbMATH07834139} and references therein. 
Note that the problem for multidimensional system is different. Here, several non-uniqueness results are known, cf. \cite{zbMATH07921311,zbMATH07191485} and references therein. 
Therefore, more general solution forms are often used, but to show their existence, consistent structure-preserving numerical methods are often used. It is therefore also essential that the solutions fulfill such inequalities so that they are physically meaningful, cf. \cite{feireisl2021} for an overview. 
\end{remark}

\subsection{Relation between the different SG-SW formulations}\label{subsec_relation}
Next, we investigate in more detail the formulations~\eqref{eq:SG_SWE_cons} and~\eqref{eq:SG_SWE_Roe}, and the corresponding entropy-entropy flux pairs. 
We note that the formulations~\eqref{eq:SG_SWE_cons} and~\eqref{eq:SG_SWE_Roe} are not equivalent in general. However, for an important class of stochastic basis functions, the two formulations are indeed identical.  We demonstrate the following Theorem: 
\begin{theorem}\label{th_sw}
For any stochastic basis satisfying~\eqref{eq:basis_P1}, the SG-SW formulations~\eqref{eq:SG_SWE_cons} and~\eqref{eq:SG_SWE_Roe} are identical.
\end{theorem}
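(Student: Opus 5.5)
The plan is to compare the two systems term by term. The time derivatives, the first flux component $\hq$ and the source term $-g\M(\hh)\partial_x\hb$ already coincide in \eqref{eq:SG_SWE_cons} and \eqref{eq:SG_SWE_Roe}. So it suffices to show that, under \eqref{eq:basis_P1}, the advective part and the pressure part of the second flux component agree. The only tools I will use are the identities (i)--(iii) of the lemma in Section~\ref{se_se} together with the Roe transformation \eqref{eq_Roe_transform}. I assume throughout that the eigenvalues of $\M(\hh^{1/2})$ are positive, so that the square root and the inverse exist.

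\textbf{Pressure term.} With $\hal=\hh^{1/2}$ we have $\M(\hal)\hal=\hh$ by definition. Hence $\M(\M(\hal)\hal)\M(\hal)\hal=\M(\hh)\hh$, which is exactly the hydrostatic term $\M(\hh)\hh$ of \eqref{eq:SG_SWE_cons}. The factor $\tfrac12 g$ does not appear explicitly in \eqref{eq:SG_SWE_Roe} and has to be restored, since that term comes from the projection of $\tfrac12 g h^2$. I would state this normalization explicitly in the proof.

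\textbf{Advective term.} Set $\hbe=\M^{-1}(\hal)\hq$. By (iii) we have $\M^{-1}(\hal)=\M(\hal^{-1})$, so $\hbe=\M(\hal^{-1})\hq$. Property (ii) then gives
\begin{equation*}
\M(\hbe)=\M(\M(\hal^{-1})\hq)=\M(\hal^{-1})\M(\hq)=\M^{-1}(\hal)\M(\hq).
\end{equation*}
Therefore $\M(\hbe)\hbe=\M^{-1}(\hal)\M(\hq)\M^{-1}(\hal)\hq$. Since all Galerkin matrices commute by (i), and hence so do their inverses, this equals $\M(\hq)\M^{-1}(\hal)^2\hq$. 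Applying (ii) once more gives $\M(\hal)^2=\M(\M(\hal)\hal)=\M(\hh)$, so $\M^{-1}(\hal)^2=\M^{-1}(\hh)$. Consequently $\M(\hbe)\hbe=\M(\hq)\M^{-1}(\hh)\hq$, which is precisely the nonlinear flux term of \eqref{eq:SG_SWE_cons}.

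The main obstacle is not computational but one of bookkeeping. I must make sure that (ii) is applied with the arguments in the correct order, and that inverses of Galerkin matrices are again Galerkin matrices, so that commutativity extends to them. Both points follow directly from the constant eigenvector structure $V\Lambda V^T$ in \eqref{eq:basis_P1}. Once these are in place, both flux components coincide and the two formulations are identical.
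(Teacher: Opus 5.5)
Your proof is correct and follows the same route as the paper: compare the second flux component term by term, use properties (i)--(iii) of the lemma to reduce $\M(\hbe)\hbe$ to $\M(\hq)\M^{-1}(\hh)\hq$, and use the Roe identity $\M(\hal)\hal=\hh$ for the pressure term. You are also right that the factor $\tfrac12 g$ is missing from the pressure term in \eqref{eq:SG_SWE_Roe}; the paper's proof passes over this without comment.
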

\begin{proof}
It suffices to show that the  terms in the second entries of the flux functions are identical. We have that
\begin{equation}
\begin{split}
\M(\M^{-1}(\hh^{1/2})  \hq) \M^{-1}(\hh^{1/2})  \hq 
& =
\M(\hq)\M^{-1}(\hh^{1/2}) \M^{-1}(\hh^{1/2}) \hq
=
\M(\hq)
\M(\M(\hh^{-1/2})\hh^{-1/2})
\hq \\
& \overset{\mbox{by def.}}{=}
\M(\hq)
\M(\hh^{-1})
\hq 
=
\M(\hq)
\M^{-1}(\hh)
\hq, 
\end{split}
\end{equation}
It follows also $
\M(\M(\hh^{1/2})  \hh^{1/2}) \M(\hh^{1/2})  \hh^{1/2} = \M(\hh) \hh$.
Hence, the formulations~\eqref{eq:SG_SWE_cons} and~\eqref{eq:SG_SWE_Roe} are identical.
\end{proof}
Using a stochastic basis that satisfies \eqref{eq:basis_P1}, we have seen that the SG-SW formulations considered in \cite{dai_etal_2021} and \cite{Gerster_Herty_20} are identical. However, since entropy functions are not unique, we focus also on \eqref{eq_ent_flux_dai} and \eqref{eq_ent_flux_gerster}. Here, we present the following result: 

\begin{theorem}\label{th_identical}
      For any stochastic basis satisfying~\eqref{eq:basis_P1}, the corresponding entropy-entropy flux pair in conservative variables~\eqref{eq_ent_flux_dai} is identical to the entropy-entropy flux pair  in conservative variables~\eqref{eq_ent_flux_gerster}. From this, it follows directly that both the entropy variables and entropy potentials are identical. 
\end{theorem}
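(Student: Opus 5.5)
The plan is to substitute the Roe-variable definitions \eqref{eq_Roe_transform}, $\hal=\hh^{1/2}$ and $\hbe=\M^{-1}(\hal)\hq$, into \eqref{eq_ent_flux_gerster}. We then simplify each term using the algebra of the lemma following \eqref{eq:basis_P1}, together with the symmetry of $\M(\cdot)$ (which is clear from \eqref{eq:SG_matrix}) and the commutation identity \eqref{eq:SG-vec-comm}. The goal is to reach, term by term, the expressions in \eqref{eq_ent_flux_dai}.

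For the entropy we treat three terms.
\begin{itemize}
\item The quadratic height term: since $\M(\hal)$ is symmetric, $\hal^T\M(\hal)^T\M(\hal)\hal=(\M(\hal)\hal)^T(\M(\hal)\hal)=\hh^T\hh$.
\item The bottom term: $\hal^T\M(\hal)\hb=(\M(\hal)\hal)^T\hb=\hh^T\hb$.
\item The kinetic term: $\hbe^T\hbe=\hq^T\M^{-1}(\hal)\M^{-1}(\hal)\hq$. By items (ii) and (iii) of the lemma, $\M^{-1}(\hal)\M^{-1}(\hal)=\M(\M(\hal^{-1})\hal^{-1})$, and this equals $\M(\hh^{-1})=\M^{-1}(\hh)$ exactly as in the proof of Theorem~\ref{th_sw}.
\end{itemize}
Hence $\hat\eta_{\text{Roe}}=\hat\eta$. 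The step $\M(\hal^{-1})\hal^{-1}=\hh^{-1}$ follows from (ii) and the definition of $\hal^{-1}$: we have $\M(\hal)\M(\hal)=\M(\M(\hal)\hal)=\M(\hh)$, then invert.

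For the flux we again go term by term.
\begin{itemize}
\item The term $g\hal^T\M(\hal)^2\hbe$ becomes $g\hh^T\M(\hal)\hq$, since $\hal^T\M(\hal)=\hh^T$ and $\M(\hal)\hbe=\hq$. This does not immediately look like $g\hq^T\hh$. Using (i) and symmetry we rewrite $\hh^T\M(\hal)\hq=\hq^T\M(\hal)\hh$, so we need $\M(\hal)\hh$ compared with $\hh$. This is the point to check carefully. Since \eqref{eq:SG_SWE_Roe} carries no explicit $g$ in front of the pressure term, I expect the intended Roe flux term is $g\hal^T\M(\hal)\hbe$ or similar, and I would verify the exact form against \cite{Gerster_Herty_20}. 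I would try writing everything via $V$: each $\M(\cdot)=V\Lambda V^T$, and all vectors can be expressed through $V^T e_1$-type identities, i.e. $\hat v=\M(\hat v)e_1$. This reduces every term to diagonal eigenvalue products, which makes the comparison mechanical.
\item The bottom term: $g\hbe^T\M(\hal)\hb=g(\M(\hal)\hbe)^T\hb=g\hq^T\hb$.
\item The kinetic term: we need $\tfrac12\hbe^T\M(\hbe)\M^{-1}(\hal)\hbe=\tfrac12\hv^T\M(\hq)\hv$ with $\hv=\M^{-1}(\hh)\hq$. Using $\M(\hbe)=\M(\M(\hal^{-1})\hq)=\M(\hal^{-1})\M(\hq)$ and commutativity (i), the left-hand side becomes $\tfrac12\hq^T\M(\hal^{-1})^{4}\M(\hq)e_1$-type expressions. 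Similarly, the right-hand side reduces to $\tfrac12\hq^T\M(\hh^{-1})^2\M(\hq)\hq$. Since $\M(\hal^{-1})^2=\M(\hh^{-1})$, the two sides agree.
\end{itemize}

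The cleanest way to organize the whole proof is the diagonalization viewpoint: under \eqref{eq:basis_P1}, $\M(\hat v)\hat w=\M(\hat w)\hat v$, and the map $\hat v\mapsto V^T\hat v$ turns SG products into pointwise products of eigenvalue vectors (scaled by $V^Te_1$). Both entropy pairs then become sums over eigenmodes of the deterministic shallow water entropy $\tfrac12(q^2/h+gh^2)+ghb$ and flux, which are obviously identical. I would use this as the backbone and cite the term-wise computations as checks.

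Once $\hat\eta_{\text{Roe}}=\hat\eta$ and $\hat H_{\text{Roe}}=\hat H$ as functions of $(\hh,\hq)$, and since the fluxes coincide by Theorem~\ref{th_sw}, the entropy variables $\partial\hat\eta/\partial\hu$ and the potentials $\hat w\hat F-\hat H$ coincide immediately. The main obstacle is the gravity term of the flux, where the bookkeeping of $\hal$ powers must match $\hq^T\hh$ exactly.
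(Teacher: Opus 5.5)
Your route is the paper's own proof: substitute $\hal=\hh^{1/2}$ and $\hbe=\M^{-1}(\hal)\hq$ into \eqref{eq_ent_flux_gerster}, then simplify term by term using the symmetry of $\M(\cdot)$ and items (i)--(iii) of the lemma. Your entropy terms, and the bottom and kinetic terms of the flux, are handled correctly. In the kinetic term the intermediate expression should be $\tfrac12\hq^T\M(\hal^{-1})^4\M(\hq)\hq$, not an ``$\M(\hq)e_1$-type'' one.

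The genuine gap is the gravity term of the entropy flux, which you leave open because of a miscount. In $g\hal^T\M(\hal)^2\hbe=g\,\hal^T\M(\hal)\,\M(\hal)\hbe$ there are exactly two factors of $\M(\hal)$. One is absorbed by $\hal^T\M(\hal)=(\M(\hal)\hal)^T=\hh^T$, the other by $\M(\hal)\hbe=\hq$. So the term is $g\hh^T\hq=g\hq^T\hh$, exactly the corresponding term of $\hat H$. Your $g\hh^T\M(\hal)\hq$ uses three factors. The follow-up you propose, comparing $\M(\hal)\hh$ with $\hh$, would lead you to the false conclusion that the identity fails.

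So your suspicion of a typo in this term is misplaced. It is correct as printed, with deterministic analogue $g\alpha^3\beta=ghq$. Your suggested replacement $g\hal^T\M(\hal)\hbe=g\hh^T\M^{-1}(\hal)\hq$, with analogue $g\sqrt{h}\,q$, would actually break the identity. The factors of $g$ that are genuinely missing sit elsewhere:
\begin{itemize}
\item The pressure term of \eqref{eq:SG_SWE_Roe} needs $\tfrac{g}{2}$. This matters for your last step, since the coincidence of the potentials uses Theorem~\ref{th_sw}.
\item The height term of $\hat\eta_{\text{Roe}}$ must read $\tfrac{g}{2}\hal^T\M(\hal)^T\M(\hal)\hal$ to match $\tfrac{g}{2}\hh^T\hh$. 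Your entropy computation, like the paper's, passes over this.
\end{itemize}

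Your diagonalization ``backbone'' would have caught the slip. With $c=V^Te_1$ one has $\hat v^T\hat w=\sum_k c_k^2\lambda_k(\hat v)\lambda_k(\hat w)$. The gravity term then becomes $g\sum_k c_k^2\lambda_k(\hal)^3\lambda_k(\hbe)=g\sum_k c_k^2\lambda_k(\hh)\lambda_k(\hq)=g\hh^T\hq$, using $\Lambda_{\hal}^2=\Lambda_{\hh}$ and $\Lambda_{\hal}\Lambda_{\hbe}=\Lambda_{\hq}$ from item (ii). Carried out in full, this would be a valid alternative proof. As written, however, it is only asserted, while the explicit check you actually perform is wrong.
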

\begin{proof}
To demonstrate that all quantities are identical can be seen by elementary calculations which are given in the Appendix \ref{sub_sub_proof_1}. 
\end{proof}

The eigenvalues of the stochastic Galerkin flux Jacobian~\eqref{eq:SG_Jacobian_cons} for bases satisfying~\eqref{eq:basis_P1} can be obtained as follows. Assuming~\eqref{eq:basis_P1} holds, Eq.~\eqref{eq:SG_Jacobian_cons} is identical to
\begin{equation}
\label{eq:SG_Jac_semi_diag}
\begin{pmatrix}
V & O_{K} \\
O_{K} & V
\end{pmatrix}
\begin{pmatrix}
O_{K} & I_{K}\\
g \Lambda(\hat{h}) - \Lambda^2(\hat{q})\Lambda^{-2}(\hat{h}) & 2 \Lambda(\hat{q})\Lambda^{-1}(\hat{h}) 
\end{pmatrix}
\begin{pmatrix}
V^T & O_{K} \\
O_{K} & V^T
\end{pmatrix}.
\end{equation}
The matrix~\eqref{eq:SG_Jac_semi_diag} is similar (hence have identical eigenvalues) to the block diagonal matrix where the $k$th diagonal block is the $2 \times 2$ matrix:
\[
\begin{pmatrix}
0 & 1\\
g\lambda_k(\hat{h})-\lambda_k^2(\hat{q}) \lambda_k^{-2}(\hat{h}) &
2\lambda^{-1}_k(\hat{h})\lambda_k(\hat{q})
\end{pmatrix}.
\]
Note the resemblance to the Jacobian of the deterministic SW equations; the eigenvalues are those of the deterministic Jacobian with any deterministic quantity $w$ replaced by the $k^\text{th}$ eigenvalue of the SG matrix of $\hat{w}$, i.e., $\lambda_k(\hat{w})$. The eigenvalues of~\eqref{eq:SG_Jacobian_cons} are thus given by
$
\lambda_k(\hat{u}) \pm \sqrt{g \lambda_k(\hat{h})}, \quad k=1,\dots, K.
$
As shown in~\cite{pettersson2015polynomial}, for Haar wavelets, the SG matrix eigenvalues are given by
$
\lambda_{k}(\hat{w}) = (\sqrt{K} V^T \hat{w})_{k}.
$
Hence, the spectrum can be obtained directly from
\revOne{
\begin{equation}
\label{eq:eigVals}
\sqrt{K}V^T \hat{u} \pm \sqrt{gK} (V^T\hat{h})^{1/2},
\end{equation}
}
where the power $1/2$ is to be applied entry-wise on the vector.

%% file: DG.tex
\section{Structure preserving numerical methods}\label{sec_DGmethod}

In the following section, we briefly introduce the DGSEM approach and highlight its key properties. A central aspect is the use of entropy-conservative two-point fluxes within a flux-differencing framework. To this end, we provide an overview of the numerical entropy conservative flux construction process and discuss the relationship between the fluxes developed in \cite{bender_2023} and \cite{dai2023energy}. We then extend our fluxes to two spatial dimensions and relate the resulting formulation to the recent work in \cite{epshteyn2024energy}.

\subsection{Entropy stable and well-balanced discontinuous Galerkin for the SG-SW formulation}

Over the past decade, there has been a marked development and analysis
of entropy stable nodal DGSEM applied to deterministic PDEs, e.g.,
\cite{carpenter2014entropy, chen2020review, gassner2016split,  winters2021}.
In the context of SW models special care is taken
to assemble an entropy stable high-order approximation that
remains well-balanced, e.g.,
\cite{fu2022high, gassner2016well, gaburro2023high, mantri2024fully, wintermeyer2017entropy, wu2021high, xing2014exactly}. Here, we adapt the entropy stable DGSEM framework to the SG-SW system \eqref{eq:SG_SWE_cons}. For simplicity, we focus on one spatial dimension and assume 
that the bottom topography $b$ is continuous as it simplifies the
discussion and removes the need to discuss a path-conservative
discretization of the nonconservative terms. For more details, we refer to the literature \cite{ winters2015comparison, wintermeyer2017entropy}.

We consider a one-dimensional spatial domain $\Omega=[x_L,x_R]$. Then, the construction of the entropy stable DGSEM can be summarized as follows. 
We multiply the system \eqref{eq_SW_pure} by a test function $\boldsymbol{\varphi}$ and integrate over the domain $\Omega$. Then, we divide $\Omega$ into non-overlapping elements $E_\kk$, $\kk=1, \ldots, \mathfrak{K}$. Instead of working in each element separately, all calculations are done in the reference element $E_0 = [-1,1]$ and we create a mapping between the physical coordinates $x$ and computational coordinate $\chi$ on the reference element $E_0$ via
    \begin{equation*}
    \chi(x) = 2\left(\frac{x-x_{k}}{x_{k+1}-x_{k}}\right) - 1,\quad\textrm{such that}\quad \chi\in[-1,1].
    \end{equation*}
    This introduces a Jacobian term on each element of the form
$
    \chi_x = \frac{2}{\Delta x}$ with $ \Delta x = x_{\kk+1} - x_\kk. 
$   From the mapping, we apply the chain rule to rewrite spatial derivatives from physical coordinates into the computational coordinates. The numerical solution and physical flux with nodal polynomials of degree $N$ are written in the Lagrange basis, e.g., 
    \begin{equation}
    u(x,t) \approx U(x,t) = \sum_{j=0}^NU_j(t)\ell_j(\chi),
    \end{equation}
    where the interpolation nodes are taken to be $N+1$ Legendre-Gauss-Lobatto (LGL) nodes. Now, we apply two times integration-by-parts where after the first round the discontinuity at the element boundary is resolved by using a numerical flux in the spirit of FV methods. The second integration-by-parts yield us the strong form DGSEM\footnote{ Note that the resulting flux penalty term at each element interface is analogous to the SAT terms present in provably stable finite difference approximations~\cite{gassner2013skew}.}. We now select the test function to be the Lagrange basis polynomials $\varphi = \ell_i(\chi)$ where $i=0,\ldots,N$ and approximate the integrals with LGL quadrature rules where we additionally collocate the interpolation and quadrature nodes.     This allows us to exploit the Kronecker delta property of the Lagrange basis in the name of computational efficiency~\cite{kopriva2009implementing}.
    This yields finally to the DGSEM. However, the discretization is not in general entropy stable. The key ingredient is now to replace the  collocated flux in the volume with an entropy conservative, numerical two-point flux (denoted with a $\#$ symbol) that extends a low-order entropy conservative finite volume flux to high-order~\cite{carpenter2014entropy, fisher2013, fisher2013_2} and results in a split-form discretization for the volume flux. 
The resulting semi-discrete nodal DGSEM takes the form
\begin{equation}\label{eq:semiDG}
\frac{\Delta x}{2}\dot{U}_i + 2\sum_{j=0}^N\mathcal{D}_{ij}F^{\#}(U_i,U_j) + \frac{\tau_i}{\omega_i}\left(F^* - F\right) = S_i,\quad\text{where}\quad i = 0,\ldots, N,
\end{equation}
with derivative matrix $\mathcal{D}$ approximating the first derivative $\frac{\partial}{\partial x}$, $\omega_i$ be the Gauss-Lobatto quadrature weights (which defines as well a diagonal norm mass matrix $\mathcal{M}=  \text{diag}(\omega_0,\ldots,\omega_N) $), and $\tau_i$ represent the contribution at the boundary. We have $$\tau_i=\begin{cases}
-1 \text{ for } i=0,\\
1 \; \text { for } i=N, \\
0 \;\text{ else}.
\end{cases}
$$
The nonconservative term on the right hand side of the
SW equations \eqref{eq_SW_pure} is approximated as in \cite{wintermeyer2017entropy}.
The selection of the symmetric, two-point volume flux $F^{\#}(U_i,U_j)$ and the two-point surface flux $F^*$ play a crucial role to guarantee the overall properties of the method. To point this out, it has been proven in several works \cite{oeffner2023, chen2020review,gassner2016split}, but goes originally back of an investigation by Fisher et al. \cite{fisher2013, fisher2013_2},  for a hyperbolic conservation law, i.e. the source term is equal to zero in \eqref{eq:semiDG} . It was demonstrated in such case: 
\begin{theorem}
    If $F^{\#}(U_i,U_j) $ is consistent and symmetric, then \eqref{eq:semiDG} is conservative and high-order accurate. If we further assume that $F^{\#}(U_i,U_j) $ is entropy conservative, then the semi-discrete scheme \eqref{eq:semiDG} is also entropy conservative within a single element. In addition, if the numerical flux $F^*$ at the element interface is entropy conservative (dissipative), the the scheme is entropy conservative (dissipative). 
\end{theorem}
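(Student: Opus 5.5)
The proof rests on the summation-by-parts (SBP) structure of the LGL collocation. With $\mathcal{Q}=\mathcal{M}\mathcal{D}$ we have $\mathcal{Q}+\mathcal{Q}^T=\mathcal{B}=\text{diag}(-1,0,\dots,0,1)$. We also use $\sum_j\mathcal{D}_{ij}=0$, since derivatives of constants are exact. Throughout, set $S_i=0$ and multiply \eqref{eq:semiDG} by $\omega_i$. The element update then reads
\[
\frac{\Delta x}{2}\omega_i\dot U_i=-2\sum_j \mathcal{Q}_{ij}F^{\#}(U_i,U_j)-\tau_i(F^*-F_i).
\]

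\textbf{Conservation.} Sum the update over $i$. By the SBP property, $2\sum_{i,j}\mathcal{Q}_{ij}F^{\#}_{ij}=\sum_{i,j}(\mathcal{Q}_{ij}+\mathcal{Q}_{ji})F^{\#}_{ij}$, using the symmetry $F^{\#}_{ij}=F^{\#}_{ji}$. This equals $\sum_{i,j}\mathcal{B}_{ij}F^{\#}_{ij}=F^{\#}(U_N,U_N)-F^{\#}(U_0,U_0)=F_N-F_0$, by consistency. The surface terms contribute $-(F^*_R-F_N)+(F^*_L-F_0)$. Together the element total changes only by $F^*_L-F^*_R$. Summing over elements, interface fluxes telescope because $F^*$ is single-valued at each interface, so the scheme is conservative.

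\textbf{High-order accuracy.} I would use the standard argument of Fisher et al. The flux-differencing volume term $2\sum_j\mathcal{D}_{ij}F^{\#}(U_i,U_j)$ has the same truncation order as $\mathcal{D}F$. To see this, expand $F^{\#}(U_i,U(x))$ in its second argument. Consistency and symmetry give $\partial_2F^{\#}(u,u)=\tfrac12 F'(u)$, so $2\partial_x F^{\#}(U_i,U(x))|_{x_i}=\partial_xF(U_i)$. Since $\mathcal{D}$ differentiates polynomials of degree $N$ exactly, the error is $\mathcal{O}(\Delta x^N)$. This step is routine and I would only sketch it.

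\textbf{Entropy conservation within an element.} Contract the update with the entropy variables $W_i=\hat w(U_i)$. The time term becomes $\frac{\Delta x}{2}\sum_i\omega_i\dot\eta_i$. For the volume term, split $2\mathcal{Q}=\mathcal{Q}-\mathcal{Q}^T+\mathcal{B}$ and use the symmetry of $F^{\#}$. This gives
\[
\sum_{i,j}\mathcal{Q}_{ij}(W_i-W_j)^TF^{\#}_{ij}+W_N^TF_N-W_0^TF_0.
\]
Tadmor's entropy-conservation condition is $(W_i-W_j)^TF^{\#}_{ij}=\Phi_i-\Phi_j$, where $\Phi$ is the entropy potential \eqref{eq_potential}. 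Then $\sum_{i,j}\mathcal{Q}_{ij}(\Phi_i-\Phi_j)=\sum_i\Phi_i\sum_j\mathcal{Q}_{ij}-\sum_j\Phi_j\sum_i\mathcal{Q}_{ij}$. The first sum vanishes because $\sum_j\mathcal{Q}_{ij}=0$. For the second, $\sum_i\mathcal{Q}_{ij}=\mathcal{B}_{jj}$ by SBP, so it equals $-(\Phi_N-\Phi_0)$. The volume term therefore reduces to $(W^TF-\Phi)_N-(W^TF-\Phi)_0=\hat H_N-\hat H_0$. Its integral in the element is exactly a boundary difference of entropy fluxes, which is entropy conservation inside a single element.

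\textbf{Interface step.} Adding the surface terms, each element contributes $-W_N^T(F^*_R-F_N)+W_0^T(F^*_L-F_0)$. The $W^TF$ parts cancel those left over from the volume term. At an interface with left state $L$ and right state $R$, the net entropy production is $-\bigl[(W_R-W_L)^TF^*-(\Phi_R-\Phi_L)\bigr]$. This is zero if $F^*$ is entropy conservative and $\le0$ if $F^*$ is entropy dissipative in Tadmor's sense. Summing over elements (with periodic or suitable boundary conditions) gives the global statement.

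The main obstacle is the index bookkeeping in the entropy step, in particular signs and the factor $2$ coming from the $\mathcal{Q}-\mathcal{Q}^T$ splitting. For the SG system the only extra point is that $\hat w,\Phi$ are the SG quantities \eqref{eq:entropy_variable_Dai}--\eqref{eq_potential}, and nothing basis-specific is needed.
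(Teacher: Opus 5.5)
Your route is the standard SBP flux-differencing argument of Fisher et al., which the paper relies on by citation and does not reproduce. Your conservation argument, the accuracy sketch, and the single-element identity reducing the volume term to $\hat H_N-\hat H_0$ are correct.

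There is one real error, and it sits in the interface step, which is exactly where the entropy-dissipative claim is decided. Carrying your own element identity through, the volume term enters the right-hand side as $-(\hat H_N-\hat H_0)$. Adding the surface terms gives
\[
\frac{\Delta x}{2}\sum_i\omega_i\dot\eta_i=\bigl(\Phi_N-W_N^TF^*_R\bigr)-\bigl(\Phi_0-W_0^TF^*_L\bigr).
\]
At an interface, node $N$ of the left element carries $U_L$, node $0$ of the right element carries $U_R$, and both use the same $F^*$. The net production is therefore $(W_R-W_L)^TF^*-(\Phi_R-\Phi_L)$, not its negative.

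With the leading minus you wrote, Tadmor's entropy-stability condition $(W_R-W_L)^TF^*\le\Phi_R-\Phi_L$ would make the production nonnegative. So your displayed expression contradicts your conclusion ``$\le 0$''. With the corrected sign the claim follows. For example, for the LLF flux \eqref{eq:LLF} built on an EC central part, the production is $-\tfrac{\lambda_{\max}}{2}(W_R-W_L)^T(U_R-U_L)\le 0$, because $W=\nabla\hat{\eta}$ and $\hat{\eta}$ is convex.
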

This is mainly a result for hyperbolic conservation laws but can be extended to balance laws including the source discretization. In terms of the deterministic SW, we refer to \cite{winters2015comparison, ranocha2017, wintermeyer2017entropy}. 
In \cite{bender_2023}, the approach has been used to solve the SG-SW equations~\eqref{eq:SG_SWE_Roe} where specific symmetric, entropy-conservative, two-point fluxes $F^{\#}$ have been constructed. Meanwhile in \cite{dai2023energy}, the FV framework has been applied where also adequate entropy-conservative, two-point fluxes $F^{\#}$ have to be constructed and used.  Here, we point out that even when the stochastic basis satisfies~\eqref{eq:basis_P1} and both the SG-SW equations and the entropy-entropy flux pairs are identical, the numerical fluxes do not have to be identical, as we will see and focus on in subsection~\ref{subsec_two_point_Dai}.

\subsection{Entropy conservative numerical two-point fluxes} \label{subsec_two_point_Dai}

Even if the underlying methods in \cite{bender_2023} and \cite{dai2023energy} are different, the construction process for entropy conservative two-point fluxes is always the same and based on the general idea of Tadmor \cite{Tadmor_2003entropy}.
To describe it, we first introduce  the average and jumps between two degrees of freedom  $i$ and $i+1$ which can be for instance the values at the cell interfaces from one to the other element or quadrature/interpolation points in DGSEM:
$$
\avg{a}_{i+1/2} := \frac{1}{2} \left(a_i+a_{i+1} \right) \text{ and } \jump{a}_{i+1/2}:= a_{i+1}-a_i.
$$
Following \cite{Tadmor_2003entropy}, an entropy conservative numerical flux  $F^{\#,con}$ satisfies\footnote{We introduce the superscript $con$ to denote the conservative part meaning the case when the source term is equal to zero. } 
\begin{equation}\label{eq_Tadmor}
\jump{\hat{w}}\cdot F^{\#,con} = \jump{\Phi}, 
\end{equation}  
where  $\jump{\Phi}$ and $\jump{\hat{w}}$ denote the jump in the  entropy potential  and entropy variables. In \eqref{eq_Tadmor}, the source has been neglected; we explain later how the source has to be incorporated.
In \cite{dai2023energy}, the authors used conservative variables and derived a numerical flux with corresponding source term given by 
\begin{equation}\label{eq_conservative_flux_dai}
\begin{aligned}
F^{\#, con}_{i+1/2}= \begin{pmatrix}
    \M\left( \overline{\hh}_{i+1/2} \right) \overline{\hv}_{i+1/2} \\
    \frac{g}{2} \overline{\left(\M (\hh) \hh\right)}_{i+1/2} +
    \M(\overline{\hv}_{i+1/2}) \M(\overline{\hh}_{i+1/2}) \overline{\hv}_{i+1/2}
    \end{pmatrix}
    \\
S^{\#}_{i+1/2}= 
\begin{pmatrix}
   0 \\
    -\frac{g}{2\Delta x} \left(\M (\overline{\hh}_{i+1/2}) \revTwo{\jump{\hb}_{i+1/2}}   + \M (\overline{\hh}_{i-1/2}) \revTwo{\jump{\hb}_{i-1/2}} \right). 
\end{pmatrix}
\end{aligned}
\end{equation}
A straightforward comparison between the flux \eqref{eq_conservative_flux_dai} and the results in \cite{bender_2023} is not possible since the representation in~\cite{bender_2023} used entropy variables whereas~\cite{dai2023energy} applied conservative variables. Furthermore, the focus in~\cite{bender_2023} was the case without bottom topography (in the appendix there is a short outlook incorporating bottom topography) whereas in~\eqref{eq_ent_flux_dai} the source discretization has already been included in the main discretization. To highlight the difference from~\cite{bender_2023} and~\cite{dai2023energy}, we briefly outline the general procedure for the entropy construction with source term:
\begin{enumerate}
\item Derive entropy conservative numerical fluxes $F_{i,k}^{\#,con}$ in matrix-vector notations using \eqref{eq_Tadmor} with zero bottom topography. Split the jumps in the potential and entropy variables in two parts and collect the parts corresponding to first and second entropy variable  $\jump{\hat{w}_{1/2}}$ together with the corresponding numerical fluxes. By direct variable comparison, we can derive entropy conservative fluxes.
\item Use the direct transformation between entropy-primitive variables and conservative variables and the properties of $\M$ to obtain an expression of the numerical entropy conservative flux in conservative variables similar to \eqref{eq_conservative_flux_dai}.
\item To include the bottom topography, we follow the approach in~\cite{zbMATH02174321} and investigate the numerical flux 
\begin{equation}\label{eq_extended}
F_{i,k}^{\#,ext}=F_{i,k}^{\#,con}+S^\#_{i,k}
\end{equation}
where $S_{i,k}$ denotes the source term between two degrees of freedom $i$ and $k$.To include bottom topography, the entropy variable $\hw_1$ from~\eqref{eq:entropy_variable_Dai} is extended by the jump in the bottom topography $\jump{b}$ resulting in the identity  
\begin{equation}\label{eq_Tadmor_2}
\jump{\hw}\cdot F^{\#} = \jump{\Phi}+g F_1^\# \jump{\hat{b}}.
\end{equation}  
Thus,  to balance the rate of change of the entropy, the discretization of the source term has to fulfill
\revTwo{
\begin{equation}\label{eq_calculated}
g F_1^\# \cdot\jump{\hb}_{i,k}+ \hw_{k} \cdot S^\#_{k,i}-\hw_{i} \cdot S^\#_{i,k}\ =0.
\end{equation}
}
\end{enumerate}
Following this recipe, we can derive the following result. 
\begin{theorem}\label{th_entropy_conservative}
Considering the SG-SW \eqref{eq:SG_SWE_Roe} with a  basis that satisfies \revTwo{\eqref{eq:basis_P1}, the }entropy conservative numerical flux with bottom topography included is given via 
\begin{equation}\label{eq_flux_conservative}
\begin{aligned}
F^{\#}_{1} &= \frac{1}{2g} \left(
\frac{1}{2}\avg{\M^2(\hv)\hv} + g\avg{\M(\hv)\hh} -\frac{1}{2}\avg{\M^2(\hv)} \ \avg{\hv} + g\M(\avg{\hv}) \avg{\hv}
\right)\\[0.1cm]
F^{\#}_{2} &= \frac{g}{2} \avg{\M(\hh)\hh}  + \M^2(\avg{\hv})\avg{\hh} +
\Bigg(- \frac{1}{2} \avg{\M(\hh)\M(\hv)\hv} 
- \frac{1}{4g} \avg{\M^3(\hv)\hv} -\frac{1}{2g}  \M^2(\avg{\hv}) \avg{\hv^2} 
 + \frac{1}{2} \avg{\M^2(\hv)}  \avg{\hh}-  \frac{1}{4g} \avg{\M^2(\hv)}  \avg{\hv^2}  \\
 &
-\frac{3}{8g}\left( 
 \avg{\M^3(\hv)} \ \avg{\hv}
 + \M(\avg{\hv}) \avg{\M^2(\hv)} \ \avg{\hv} + 2 \M^2(\avg{\hv})\M(\avg{\hv})\ \avg{\hv}\right) 
 + \frac{1}{2g} \avg{\M^2(\hv)\hv} + \frac{1}{2g} \avg{\M^2(\hv)} \ \avg{\hv} + \frac{1}{g} \M^2(\avg{\hv}) \avg{\hv} \Bigg) \\
\end{aligned},
\end{equation}
with corresponding source discretization
\begin{equation}\label{eq:bterm}
S^{\#}_{i,k} = \begin{pmatrix}
0 \\
\frac{1}{2} g \left(\M (\overline{\hh}_{i,k}) \jump{\hb}_{i,k} \right)+  \frac{1}{8g}  \left( (\M(\hv) )_k- (\M(\hv) )_i  \right)^2 \jump{\hat{b}}_{i,k}
\end{pmatrix},
\end{equation}
where  we use the notation $\hv^2 \equiv \M(\hv) \hv$ in \eqref{eq_flux_conservative}.

\end{theorem}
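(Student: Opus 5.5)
The plan follows the three-step recipe stated just before the theorem, working throughout in a basis satisfying \eqref{eq:basis_P1}. Under that property all SG matrices $\M(\cdot)$ commute, satisfy $\M(\M(\hat a)\hat b)=\M(\hat a)\M(\hat b)$, and are simultaneously diagonalized by the constant matrix $V$. The first move is to rotate everything into the eigenbasis, $\tilde a = \sqrt{K}V^T\hat a$, so that each SG product becomes an entrywise product of the eigenvalue vectors $\lambda_k(\cdot)$. By Theorem~\ref{th_identical} we may use the entropy pair \eqref{eq_ent_flux_dai}. Its entropy variables \eqref{eq:entropy_variable_Dai} and potential \eqref{eq_potential} then decouple, up to the constant orthogonal change of basis, into $K$ scalar copies of the deterministic shallow water quantities: $w_1 = g(h+b) - \tfrac12 v^2$, $w_2 = v$, $\Phi = \tfrac12 g v h^2$. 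Since $\jump{\hat w}\cdot F = \jump{\tilde w}\cdot \tilde F/K$ up to the normalization, the Tadmor condition \eqref{eq_Tadmor} holds for the SG system whenever it holds mode by mode. So it suffices to verify a scalar identity in each eigen-mode and then map back with $V$. Products of eigenvalue vectors map back to $\M$-products, and averages commute with $V$.

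For the conservative part ($b=0$), I will expand $\jump{\Phi}$ with the discrete product rules
\begin{equation*}
\jump{ab}=\avg{a}\jump{b}+\avg{b}\jump{a},\qquad \jump{a^2}=2\avg{a}\jump{a},
\end{equation*}
applied to the mode quantities. The jump in $w_1$ contains $\jump{v^2}$, and the theorem's flux is written in averages of $v$, $h$, $h^2$ and powers of $v$ rather than only products of averages. So instead of the shortest Fjordholm-type flux, I will collect every term of $\jump{\Phi}$ proportional to $\jump{w_1}=g\jump{h}-\tfrac12\jump{v^2}$ and every term proportional to $\jump{w_2}=\jump{v}$. To do this I will express $\jump{h}$ through $\jump{w_1}$ and $\jump{v}$, namely $g\jump{h} = \jump{w_1} + \avg{v}\jump{v}$ in each mode. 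This substitution produces the $1/g$ factors visible in \eqref{eq_flux_conservative}.

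Matching coefficients then gives $F_1^\#$ and $F_2^\#$ in each mode. Translating back yields the stated expressions: a scalar $v^3$ becomes $\M^2(\hv)\hv$, $v^2$ becomes $\hv^2=\M(\hv)\hv$, and a product of averages $\avg{a}\,\avg{b}$ becomes $\M(\avg{\hat a})\avg{\hat b}$ or $\avg{\M(\hat a)}\avg{\hat b}$. After that I will check two further properties. Consistency: setting both states equal should recover $\hat F$ of \eqref{eq:SG_SWE_cons}, which confirms the nonstandard grouping. Symmetry in $(i,k)$ is immediate, since only averages appear.

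For the bottom topography I will use \eqref{eq_extended} and \eqref{eq_Tadmor_2}. The source $S^\#_{i,k}$ must satisfy \eqref{eq_calculated} with $F_1^\#$ from the previous step. Again working mode-wise, the ansatz is $S^\#_2 = \tfrac g2\M(\avg{\hh})\jump{\hb}+R\,\jump{\hb}$. The first piece cancels $g\avg{h}\jump{b}$ in the standard way, using $\hw_k-\hw_i$ restricted to the second component, which is $\jump{v}$. The correction $R$ must absorb the mismatch between $F_1^\#$ and $\avg{h}\avg{v}$, a mismatch coming from the $1/(2g)$ velocity terms. I expect this mismatch to reduce to a multiple of $\jump{v}^2$, giving $R=\tfrac{1}{8g}(\M(\hv)_k-\M(\hv)_i)^2$ as stated.

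The main obstacle is the bookkeeping in the conservative step. Because the paper's flux is a particular, non-minimal solution of the underdetermined Tadmor condition, the coefficient matching must be organized to produce exactly the stated combination of averages. In particular, the $3/8g$ cubic terms have to come out of expanding $\jump{v^3}$-type terms via $\jump{v^3}=\big(\avg{v^2}+\avg{v}^2+\ldots\big)\jump{v}$. I will do this mode-wise with scalars first, where it is a finite polynomial identity checkable by expanding in $v_i,v_k,h_i,h_k$, and only then lift it using \eqref{eq:basis_P1}.
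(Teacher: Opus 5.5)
Your reduction to $K$ decoupled scalar modes via the constant eigenvectors $V$ is legitimate. Products become entrywise products, $\hat a^T\hat b$ becomes a positively weighted sum over modes, and averages and jumps commute with $V$. It is also cleaner than the paper's direct manipulation of $\M$-identities. The gap is in the two places where you assert the outcome instead of computing it.

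First, ``matching coefficients'' is not a well-defined step. Per mode, Tadmor's condition is one equation for two unknowns. Applying your two product rules to $\Phi=\frac g2 vh^2$ and then substituting $g\jump{h}=\jump{w_1}+\avg{v}\jump{v}$ gives $F_1=\avg{h}\,\avg{v}$ and $F_2=\frac g2\avg{h^2}+\avg{h}\,\avg{v}^2$. That is exactly \eqref{eq_conservative_flux_dai}, with no $1/g$ terms at all. To get the paper's $F_1$ you must first write $\Phi=\frac{1}{2g}(w_1^2w_2+w_1w_2^3)+\frac{1}{8g}w_2^5$ in entropy variables and split that, as the appendix does. Even then, the result in each mode is
\begin{align*}
F_1&=\tfrac12\avg{hv}+\tfrac12\avg{h}\,\avg{v}+\tfrac{1}{4g}\big(\avg{v^3}-\avg{v^2}\,\avg{v}\big),\\
F_2&=\tfrac g2\avg{h}^2+\avg{h}\,\avg{v}^2+\tfrac{1}{8g}\big(\avg{v^4}-\avg{v^2}^2-3\avg{v}^2\avg{v^2}+2\avg{v}^4+\avg{v}\,\avg{v^3}\big).
\end{align*}
This is, up to a typo, what the appendix's Step~2 produces, but it is not what the theorem displays. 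The displayed $F_1$ has $g\M(\avg{\hv})\avg{\hv}$ where $g\M(\avg{\hv})\avg{\hh}$ is needed; at equal states it returns $\frac12(hv+v^2)$. The displayed $F_2$ at equal states returns $\frac g2h^2+hv^2-\frac{5}{2g}v^4+\frac2g v^3$, and it contains dimensionally inconsistent $v^3/g$ terms. Tadmor's condition for all state pairs forces $F^\#(u,u)=F(u)$, so the displayed flux cannot be entropy conservative. The consistency check you propose would therefore fail rather than confirm the grouping.

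The source step also fails. The mismatch is not purely a velocity term. Per mode,
\[
F_1-\avg{h}\,\avg{v}=\tfrac18\jump{h}\jump{v}+\tfrac{1}{8g}\avg{v}\jump{v}^2,
\]
because $\frac12(\avg{hv}-\avg{h}\,\avg{v})=\frac18\jump{h}\jump{v}$. The paper's Step~3 drops this piece too, by treating the first components of \eqref{eq_flux_conservative} and \eqref{eq_conservative_flux_dai} as having the same contribution. With your symmetric ansatz $\tilde S_{i,k}=R\jump{b}_{i,k}$, condition \eqref{eq_calculated} becomes
\[
2R\avg{v}=\tfrac g8\jump{h}\jump{v}+\tfrac18\avg{v}\jump{v}^2 .
\]
The velocity part gives $R=\frac1{16}\jump{v}^2$, not $\frac{1}{8g}\jump{v}^2$. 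The $h$-part would need $\frac{g}{16}\jump{h}\jump{v}/\avg{v}$, which is singular at $\avg{v}=0$. It must instead be cancelled by a non-antisymmetric term such as $-\frac g8\jump{h}_{i,k}\jump{b}_{i,k}$.

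Concretely, take $g=1$, $(h_i,h_k)=(1,2)$, $(v_i,v_k)=(0,1)$, $\jump{b}=1$, and the corrected $F_1$. The stated $S^\#$ leaves a residual $1/16$ in \eqref{eq_calculated}. The choice $\frac g2\M(\avg{\hh})\jump{\hb}-\frac g8\M(\jump{\hh})\jump{\hb}+\frac1{16}\M^2(\jump{\hv})\jump{\hb}$ cancels it. Your mode-wise method can prove a corrected version of the theorem, but not the statement as displayed.
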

\begin{proof}
The proof follows the recipe and steps written above. It is based on simple, but lengthy calculations. For completeness,  it can be found in the Appendix \ref{sub_ap_2}. 
Note that in~\eqref{eq_flux_conservative}, we drop the notation of the corresponding degree of freedom $i,k$ of the averages  to shorten the presentation. 
\end{proof}

\subsubsection*{Discussion about the relations}

Clearly, the entropy-conservative numerical fluxes \eqref{eq_conservative_flux_dai} and \eqref{eq_flux_conservative} are not the same. This is evident because \eqref{eq_flux_conservative} involves higher-order (third-order and higher) average terms concerning velocity, whereas the flux in~\eqref{eq_conservative_flux_dai} contains  third (second) order terms. In $F_1^\#$, the first term differs also. In $F_2^\#$, the flux \eqref{eq_conservative_flux_dai} is incorporated in the first two terms of \eqref{eq_flux_conservative}, with additional terms arising from the averaging process. We can draw a parallel to the deterministic case as discussed in \cite{ranocha2017}, which elaborates on generating two-parameter families of entropy-conservative fluxes. The deterministic counterparts of \eqref{eq_flux_conservative} and \eqref{eq_conservative_flux_dai} fit within this two-parameter family with specific parameter choices. The flux \eqref{eq_flux_conservative} corresponds to both parameters set to one, while \eqref{eq_conservative_flux_dai} corresponds to one parameter being adjusted to eliminate all higher-order terms concerning the conservative variables and the other is set to one. These types of fluxes have been constructed and studied in \cite{winters2015comparison, wintermeyer2017entropy, fjordholm2012arbitrarily}. Their main advantage lies in their simpler representation. However, as also explored in \cite{ranocha2017}, no significant advantages in terms of 
accuracy and stability have been observed between use of different entropy-conservative fluxes.

Finally, note that the source discretizations also have to be adapted for the higher-order average processes of the velocity components. 
However, note that the 
Dai et al. flux \eqref{eq_ent_flux_dai} is well-balanced for lake at rest \eqref{eq_well_balanced_SW} as pointed out in \cite{dai2023energy}.
The first terms in $F_2^\#$ of \eqref{eq_flux_conservative} with \eqref{eq:bterm} are identical to \eqref{eq_well_balanced_SW} whereas only higher-order velocity components are added which are zero for lake at rest \eqref{eq_well_balanced_SW}. As a result, we obtain that the flux \eqref{eq_flux_conservative} with source term \eqref{eq:bterm} applied in a DG framework yields a scheme which is entropy conservative and discretely well-balanced for lake at rest.

We have pointed out the similarities of the results between \cite{dai2023energy} and \cite{bender_2023}. Both approaches yield  identical results in terms of entropy function, fluxes, and potential if a \eqref{eq:basis_P1} satisfying basis is used. The only difference are the  numerical fluxes 
due to different average process. 
Next, we will extend the investigation to two space dimensions. Here, we focus on the approach of \cite{dai2023energy, epshteyn2024energy} for simplicity, working with the average process in conservative variables.

\subsection{The SG-SW formulation in two space dimensions}

We have the following two-space dimension SG-SW formulation 
\begin{equation}
\label{eq:SG_SWE_cons_2d} 
\frac{\partial}{\partial t}
\begin{pmatrix}
\hh\\
\hq_{\mathfrak{1}}\\
\hq_{\mathfrak{2}}
\end{pmatrix}
+
\frac{\partial}{\partial x}
\begin{pmatrix}
\hq_{\mathfrak{1}}\\
\M(\hq_{\mathfrak{1}}) \M^{-1}(\hh)  \hq_{\mathfrak{1}} 
+ \frac{1}{2}g  \M(\hh)  \hh\\
\M(\hq_{\mathfrak{2}}) \hv_{\mathfrak{1}}
\end{pmatrix}
+
\frac{\partial}{\partial y}
\begin{pmatrix}
\hq_{\mathfrak{2}}\\
\M(\hq_{\mathfrak{1}}) \hv_{\mathfrak{2}}
\\
\M(\hq_{\mathfrak{2}}) \M^{-1}(\hh)  \hq_{\mathfrak{2}} 
+ \frac{1}{2}g  \M(\hh)  \hh
\end{pmatrix}
=
\begin{pmatrix}
\hat{0} \\
-g \M(\hh) \frac{\partial}{\partial x} \hat{b}\\
-g \M(\hh) \frac{\partial}{\partial y} \hat{b}
\end{pmatrix},
\end{equation}
with $\hv_{\mathfrak{1}}=\M^{-1}(\hh) \hq_{\mathfrak{1}}$ and 
$\hv_{\mathfrak{2}}=\M^{-1}(\hh) \hq_{\mathfrak{2}}$ where $\bullet_{\mathfrak{1}/\mathfrak{2}}$ denote the quantity, e.g. velocities or discharge, in $x$ or $y$ directions.
Note the form of the moment equations in \eqref{eq:SG_SWE_cons_2d} differs from that presented in \cite{epshteyn2024energy} but, importantly, it has the same symmetries meaning the positions of $\hv_{\mathfrak{\cdot}}$. 
A straightforward extension from the works \cite{dai2023energy, winters2015comparison} and analogously to \cite{epshteyn2024energy} yields the following result: 
\begin{theorem}\label{th_2d}
For any stochastic basis satisfying~\eqref{eq:basis_P1}, the SG-SW equations \eqref{eq:SG_SWE_cons_2d} in two dimensions is equipped with the following entropy - entropy flux pair: 
\begin{equation}\label{eq_ent_flux_dai_2D}
\begin{aligned}
\hat{\eta}(\hh,\hq_{\mathfrak{1}}, \hq_{\mathfrak{2}}) &= \frac{1}{2}\left( \hq_{\mathfrak{1}}^{T}\M^{-1}(\hh)\hq_{\mathfrak{1}} + \hq_{\mathfrak{2}}^{T}\M^{-1}(\hh)\hq_{\mathfrak{2}}+ g \hh^{T}\hh\right) + g\hh^{T}\hat{b},\\
\hat{H}_{\mathfrak{1}}(\hh,\hq_{\mathfrak{1}}, \hq_{\mathfrak{2}}) &= \frac{1}{2} \hv_{\mathfrak{1}}^{T}\M(\hq_{\mathfrak{1}}) \hv_{\mathfrak{1}} + \frac{1}{2} \hv_{\mathfrak{2}}^{T}\M(\hq_{\mathfrak{1}}) \hv_{\mathfrak{2}} + g\hq_{\mathfrak{1}}^{T}\hh + g\hq_{\mathfrak{1}}^{T}\hat{b},\\
\hat{H}_{\mathfrak{2}}(\hh,\hq_{\mathfrak{1}}, \hq_{\mathfrak{2}}) &= \frac{1}{2} \hv_{\mathfrak{1}}^{T}\M(\hq_{\mathfrak{2}}) \hv_{\mathfrak{1}} +\frac{1}{2} \hv_{\mathfrak{2}}^{T}\M(\hq_{\mathfrak{2}}) \hv_{\mathfrak{2}}  + g\hq_{\mathfrak{2}}^{T}\hh + g\hq_{\mathfrak{2}}^{T}\hat{b},  
\end{aligned}
\end{equation} 
\revTwo{where} the entropy variables and potential are given via
\begin{equation}\label{eq:entropy_variable_Dai_2D}
\hw:= \left(\frac{\partial \hat{\eta}}{\partial \hu} \right)^T= \left( -\frac{1}{2}\hv_{\mathfrak{1}}^T\M(\hv_{\mathfrak{1}})- \frac{1}{2}\hv_{\mathfrak{2}}^T\M(\hv_{\mathfrak{2}})+g (\hh+\hat{b})^T, \hv_{\mathfrak{1}}, \hv_{\mathfrak{2}}\right)^T,
\end{equation}
and 
\begin{equation}\label{eq_potential_2D}
\Phi= \hw \hat{F}- (\hat{H}_{\mathfrak{1}}+\hat{H}_{\mathfrak{2}}) = \frac{1}{2} g \hv_{\mathfrak{1}}^T \M( \hh) \hh +\frac{1}{2} g \hv_{\mathfrak{2}}^T \M( \hh) \hh.
\end{equation}
Consider a tensor structure grid. Then, an entropy conservative flux is given via 
\begin{equation}\label{eq_conservative_flux_2d}
\begin{aligned}
F^{\#}_{\mathfrak{1}, i+1/2}= \begin{pmatrix}
    \M\left( \overline{\hh}_{i+1/2} \right) \overline{\hv}_{\mathfrak{1}, i+1/2} \\
    \frac{g}{2} \overline{\left(\M (\hh) \hh\right)}_{i+1/2} +
    \M(\overline{\hv}_{\mathfrak{1},i+1/2}) \M(\overline{\hh}_{i+1/2}) \overline{\hv}_{\mathfrak{1},i+1/2}\\
      \M(\overline{\hv}_{\mathfrak{1},i+1/2}) \M(\overline{\hh}_{i+1/2}) \overline{\hv}_{\mathfrak{2},i+1/2}
    \end{pmatrix},
    \\
    F^{\#}_{\mathfrak{2}, i+1/2}= \begin{pmatrix}
    \M\left( \overline{\hh}_{i+1/2} \right) \overline{\hv}_{\mathfrak{2}, i+1/2} \\
      \M(\overline{\hv}_{\mathfrak{2},i+1/2}) \M(\overline{\hh}_{i+1/2}) \overline{\hv}_{\mathfrak{1},i+1/2} \\
       \frac{g}{2} \overline{\left(\M (\hh) \hh\right)}_{i+1/2} +
    \M(\overline{\hv}_{\mathfrak{2},i+1/2}) \M(\overline{\hh}_{i+1/2}) \overline{\hv}_{\mathfrak{2},i+1/2}\\
    \end{pmatrix},
\end{aligned}
\end{equation}
with source discretization
 \begin{equation}\label{eq_conservative_source_2d}
\begin{aligned}   
S^{\#}_{i+1/2}= 
\begin{pmatrix}
   0 \\
    -\frac{g}{2\Delta x} \left(\M (\overline{\hh}_{i+1/2}) \jump{\hat B}_{i+1/2}   + \M (\overline{\hh}_{i-1/2}) \jump{\hat B}_{i-1/2} \right)\\
        -\frac{g}{2\Delta y} \left(\M (\overline{\hh}_{i+1/2}) \jump{\hat B}_{i+1/2}   + \M (\overline{\hh}_{i-1/2}) \jump{\hat B}_{i-1/2} \right)
\end{pmatrix},
\end{aligned}
\end{equation}
resulting in a well-balanced scheme for lake-at-rest. 
\end{theorem}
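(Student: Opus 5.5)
Throughout, the plan works in the eigenbasis of \eqref{eq:basis_P1}. Because all SG matrices are simultaneously diagonalized by the constant orthogonal $V$, every quantity of the form $\M(\hat a)\M(\hat c)\hat d$ becomes componentwise in the transformed coefficients $\tilde a = V^T\hat a$ (up to the scaling of the eigenvalues). Lemma (i)--(iii) gives commutativity, $\M(\M(\hv)\hat w)=\M(\hv)\M(\hat w)$, and $\M^{-1}(\hh)=\M(\hh^{-1})$. Together these reduce every 2D identity to $K$ decoupled copies of the deterministic 2D shallow water identities of \cite{winters2015comparison}, with $h,v_1,v_2$ replaced by $\lambda_k(\hh),\lambda_k(\hv_{\mathfrak 1}),\lambda_k(\hv_{\mathfrak 2})$. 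The proof follows the same order as Theorem \ref{th_identical} and the 1D construction of \cite{dai2023energy}.

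\emph{Step 1: entropy pair.} I will differentiate $\hat\eta$ in \eqref{eq_ent_flux_dai_2D} with respect to $\hh,\hq_{\mathfrak 1},\hq_{\mathfrak 2}$. The derivative of $\hq^T\M^{-1}(\hh)\hq$ with respect to $\hh$ is $-\hv^T\M(\hv)$, by the same computation as in \cite[Remark 3.1]{dai2023energy}, using \eqref{eq:SG-vec-comm} and symmetry of $\M$. This gives \eqref{eq:entropy_variable_Dai_2D}. Next I will check $\hw^T \partial_{\hu}\hat F_{\mathfrak m} = \partial_{\hu}\hat H_{\mathfrak m}$ for $\mathfrak m=\mathfrak 1,\mathfrak 2$, which is equivalent to $\partial_t\hat\eta+\partial_x\hat H_{\mathfrak 1}+\partial_y\hat H_{\mathfrak 2}=0$ for smooth solutions. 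The $\hh$-part and the own-direction momentum parts are exactly the 1D computation. The new cross terms, e.g.\ $\M(\hq_{\mathfrak 2})\hv_{\mathfrak 1}$ in the $x$-flux paired with $\hw_3=\hv_{\mathfrak 2}$, have to reproduce $\tfrac12\hv_{\mathfrak 2}^T\M(\hq_{\mathfrak 1})\hv_{\mathfrak 2}$. In the eigenbasis this is the scalar identity $v_2\, d(q_2v_1) - \tfrac12 v_2^2\, d(hv_1)\cdot(\ldots)$, which is the deterministic one. The potential \eqref{eq_potential_2D} then follows by computing $\hw^T\hat F_{\mathfrak m}-\hat H_{\mathfrak m}$: the kinetic terms cancel as in \eqref{eq_potential}, leaving $\tfrac g2\hv_{\mathfrak m}^T\M(\hh)\hh$.

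\emph{Step 2: entropy conservation of the fluxes.} On a tensor grid each direction decouples, so it suffices to verify Tadmor's condition \eqref{eq_Tadmor} for $F^\#_{\mathfrak 1}$ across an $x$-interface with zero bottom (and symmetrically in $y$). I will expand the jumps with the discrete product rules $\jump{ab}=\avg a\jump b+\jump a\avg b$, lifted to SG form via commutativity as $\jump{\M(\hat a)\hat c}=\M(\avg{\hat a})\jump{\hat c}+\M(\jump{\hat a})\avg{\hat c}$. For the kinetic part of $\hw_1$ this gives
\[
\jump{\hv^T\M(\hv)} = 2\avg{\hv}^T\M(\jump{\hv}),
\]
and
\[
\jump{\hv_{\mathfrak 1}^T\M(\hh)\hh} = \jump{\hv_{\mathfrak 1}}^T\avg{\M(\hh)\hh}+\avg{\hv_{\mathfrak 1}}^T\cdot 2\M(\avg{\hh})\jump{\hh}.
\]
I will then collect the coefficients of $\jump{\hh}$, $\jump{\hv_{\mathfrak 1}}$ and $\jump{\hv_{\mathfrak 2}}$ and match them against $F^\#_{\mathfrak 1}$. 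The third component of $F^\#_{\mathfrak 1}$, namely $\M(\avg{\hv_{\mathfrak 1}})\M(\avg\hh)\avg{\hv_{\mathfrak 2}}$, exactly cancels the cross term $-\avg{\hv_{\mathfrak 2}}^T\M(\jump{\hv_{\mathfrak 2}})F^\#_{\mathfrak 1,1}$ produced by $\jump{\hw_1}$. This is where the symmetric placement of $\hv$ in \eqref{eq:SG_SWE_cons_2d} matters.

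\emph{Step 3: source and well-balancing.} With bottom included, I will use \eqref{eq_Tadmor_2} and verify the balance condition \eqref{eq_calculated} per direction for the source \eqref{eq_conservative_source_2d}, exactly as in the 1D flux \eqref{eq_conservative_flux_dai}. For lake at rest, $\hv_{\mathfrak m}=0$ kills all velocity terms, leaving $\tfrac g2\jump{\M(\hh)\hh}=g\M(\avg\hh)\jump{\hh}$. Since $\jump{\hh}=-\jump{\hb}$, this cancels the source in each direction.

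I expect the main obstacle to be the bookkeeping of the cross terms in Step 2, in particular making the SG product rule for jumps valid. That rule needs \eqref{eq:basis_P1} (via Lemma (i),(ii)) to move $\M(\cdot)$ between factors. I will handle it by performing all manipulations in the diagonalized variables.
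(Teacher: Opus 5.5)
\textbf{Verdict.} The proposal has one genuine gap: it never proves that $\hat\eta$ is convex. Everything else checks out. Your eigenbasis reduction is more self-contained than the paper's sketch, which defers to \cite{epshteyn2024energy}, and it also closes the gap quickly.

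\textbf{What checks out.} Your Step~2 computation is correct. The kinetic $\jump{\hv_{\mathfrak 1}}$ terms cancel with no basis assumption. The leftover terms $g\jump{\hh}^T\M(\avg{\hh})\avg{\hv_{\mathfrak 1}}+\tfrac g2\jump{\hv_{\mathfrak 1}}^T\avg{\M(\hh)\hh}$ equal $\jump{\Phi_{\mathfrak 1}}$ with $\Phi_{\mathfrak 1}=\tfrac g2\hv_{\mathfrak 1}^T\M(\hh)\hh$. The $\jump{\hv_{\mathfrak 2}}$ terms cancel because $\M(\avg{\hv_{\mathfrak 1}})\M(\avg{\hh})\avg{\hv_{\mathfrak 2}}=\M(\avg{\hv_{\mathfrak 2}})\M(\avg{\hh})\avg{\hv_{\mathfrak 1}}$.

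\textbf{The gap: convexity of $\hat\eta$.} The paper's proof sketch names two things needed for $(\hat\eta,\hat H_{\mathfrak 1},\hat H_{\mathfrak 2})$ to be an entropy--entropy flux pair: positive definiteness of the entropy Hessian, and the compatibility condition. You prove only the second. Compatibility alone does not make $\hat\eta$ an entropy; convexity is what makes $\hu\mapsto\hw$ a valid change of variables and what gives stability.

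Your reduction handles this directly.
\begin{itemize}
\item Put $c_k=(V^Te_1)_k$. Since $\M(\hat a)e_1=\hat a$, you get $V^T\hat a=\Lambda_{\hat a}V^Te_1$.
\item Hence every $c_k\neq 0$, and $\lambda_k(\hat a)=(V^T\hat a)_k/c_k$ is linear in $\hat a$.
\item Also $\hat a^T\M(\hat b)\hat c=\sum_k c_k^2\lambda_k(\hat a)\lambda_k(\hat b)\lambda_k(\hat c)$.
\end{itemize}
This gives
\[
\hat\eta=\sum_{k=1}^K c_k^2\Big(\frac{\lambda_k(\hq_{\mathfrak 1})^2+\lambda_k(\hq_{\mathfrak 2})^2}{2\lambda_k(\hh)}+\frac g2\lambda_k(\hh)^2\Big)+g\hh^T\hb .
\]
The map $\hu\mapsto\big(\lambda_k(\hh),\lambda_k(\hq_{\mathfrak 1}),\lambda_k(\hq_{\mathfrak 2})\big)_{k}$ is linear and invertible. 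So the Hessian is congruent to a block-diagonal matrix of scaled deterministic shallow-water entropy Hessians. It is positive definite if and only if $\lambda_k(\hh)>0$ for all $k$. State this hypothesis explicitly, i.e.\ that $\M(\hh)$ is symmetric positive definite; you already use it tacitly when you write $\M^{-1}(\hh)$.

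\textbf{Smaller points.}
\begin{itemize}
\item The scalar identity in Step~1 is garbled. The one you need is $-\tfrac12v_2^2\,dq_1+v_2\,d(q_1q_2/h)=d\big(\tfrac12q_1v_2^2\big)$.
\item The jump rule $\jump{\M(\hat a)\hat c}=\M(\avg{\hat a})\jump{\hat c}+\M(\jump{\hat a})\avg{\hat c}$ follows from bilinearity alone and needs no \eqref{eq:basis_P1}.
\item The basis property enters only in the $\jump{\hv_{\mathfrak 2}}$ cancellation. There it acts through commutativity, property (i) of the Lemma in Section~\ref{se_se}, together with \eqref{eq:SG-vec-comm}. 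The placement of $\hv$ in \eqref{eq:SG_SWE_cons_2d} is not what matters: under \eqref{eq:basis_P1} one has $\M(\hq_{\mathfrak 2})\hv_{\mathfrak 1}=\M(\hq_{\mathfrak 1})\hv_{\mathfrak 2}$ anyway.
\item Your well-balancing check holds only under the readings you quietly adopt, and these should be stated. First, the $y$-component of \eqref{eq_conservative_source_2d} must use $y$-direction jumps $\jump{\hb}_{j\pm1/2}$, not the $x$-jumps as printed. Second, $\jump{\hh}=-\jump{\hb}$ must hold in every coefficient, not only the mean as \eqref{eq_well_balanced_SW} literally says.
\end{itemize}
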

\begin{proof}[Sketch of the proof]
The calculation of the entropy variable \eqref{eq_ent_flux_dai_2D} and potential \eqref{eq_potential_2D} is  straightforward, whereas the numerical fluxes can be derived using the approach presented in Subsection~\ref{subsec_two_point_Dai}.
 To ensure that we have an entropy entropy flux pair, we have to demonstrate that 
the entropy function is a convex function in the conservative variables, e.g. showing that the entropy Hessian is strictly positive definite, and that the compatibility condition $
\nabla_{\hu } \hat{\eta} \cdot \nabla_{\hu} \hat{F} = \nabla_{\hu} \hat{H},
$
is satisfied. 
The proof is a straightforward extension from the one space dimensional case, and can be found in~\cite{epshteyn2024energy}.
\end{proof}

%% file: Numerical_results.tex
\section{Numerical results}\label{se_numerics}

We investigate the entropic properties and well-balancedness of the stochastic Galerkin discretization in one and two spatial dimensions.
\revTwo{Throughout the numerical results we denote the use of entropy conservative (EC) or entropy stable (ES) fluxes.}
The spatial discretizations for the split form DGSEM are available in Trixi.jl
\cite{ranocha2022adaptive,schlottkelakemper2021purely}.
For time integration we use the five-stage, four-order explicit Runge-Kutta method of Carpenter and Kennedy \cite{Carpenter&Kennedy:1994} implemented in
OrdinaryDiffEq.jl \cite{rackauckas2017differentialequations}.
We use Plots.jl \cite{christ2023plots} to visualize the results.
All source code needed to reproduce the numerical experiments is
available online in our reproducibility repository~\cite{oeffner2026numericalRepro}.

\subsection{One dimensional problems}

For different numbers of Haar wavelets and polynomial orders we verify the well-balanced property of the one-dimensional approximation in Section~\ref{sec:wb1D} and the entropy conservation / stability of the stochastic DG scheme in Section~\ref{sec:eces1D}.

\subsubsection{Well-balancing}\label{sec:wb1D}

These tests all use polynomials of degree $N=3$ in each element on a 16 element mesh of the domain $[0, 20]$. 
The final time is set to $t_{\text{final}} = 100$. 
The boundary conditions are periodic and we use the EC flux and nonconservative terms \eqref{eq_conservative_flux_dai}.
We also ran well-balancing tests using the ES flux at element interfaces and obtained similar well-balancedness errors.

We use both the uncertain height and uncertain position one dimension bottom topography $b(x, \xi)$ described in \ref{sec:uncertain_bottom1D}. 
The initial conditions for the well-balancing tests are
\[
\begin{aligned}
\mathbf{H}(x,t=0) &= \left\{
\begin{array}{ll}
\frac{4}{3}, & \mbox{first wavelet}\\
0, & \mbox{otherwise},
\end{array}
\right.
\qquad
\mathbf{v}(x,t=0) = 0.
\end{aligned}
\]
So, the uncertainty in the bottom topography $b(x, \xi)$ propagates into the primitive variables $\mathbf{h} = \mathbf{H} - b$.
First, we test a one dimensional uncertain position and present the discrete $L_1$ well balanced error with two, four, and eight Haar wavelets.
Integrating up to a final time the well-balancedness error in each water height is reported in Table~\ref{tab:1Dheight}
\begin{table}[H]
\centering
\caption{1D uncertain height, final time $100$, polynomial degree $3$, time step $0.1$, \revTwo{EC fluxes in the volume and on the surface}.}
\label{tab:1Dheight}
\begin{tabular}{c S[table-format=1.2e-2]
S[table-format=1.2e-2]
S[table-format=1.2e-2]
S[table-format=1.2e-2]
S[table-format=1.2e-2]
S[table-format=1.2e-2]
S[table-format=1.2e-2]
S[table-format=1.2e-2]}
\toprule
{$K$}
& {$H_1$} & {$H_2$} & {$H_3$} & {$H_4$}
& {$H_5$} & {$H_6$} & {$H_7$} & {$H_8$} \\
\midrule
2
& 3.23005511e-15
& 1.11952624e-15
& \multicolumn{1}{c}{---} & \multicolumn{1}{c}{---} & \multicolumn{1}{c}{---} & \multicolumn{1}{c}{---} & \multicolumn{1}{c}{---} & \multicolumn{1}{c}{---}
\\
4
& 3.67645729e-15
& 8.03321360e-16
& 1.02575354e-15
& 1.92272206e-15
& \multicolumn{1}{c}{---} & \multicolumn{1}{c}{---} & \multicolumn{1}{c}{---} & \multicolumn{1}{c}{---}
\\
8
& 2.41126563e-15
& 5.61454923e-16
& 8.30676292e-16
& 1.07638418e-15
& 3.94810437e-16
& 6.16995969e-16
& 6.39237973e-16
& 6.66624510e-16
\\
\bottomrule
\end{tabular}
\end{table}

We consider the same initial condition as above only now with a bottom topography $b(x, \xi)$ with uncertain position.
The discrete $L_1$ well balanced error at time $t_{\text{end}}$ with two, four, and eight Haar wavelets is presented in Table~\ref{tab:1Dposition}.
\begin{table}[H]
\centering
\caption{1D uncertain position, final time $100$, polynomial degree $3$, time step $0.1$, \revTwo{EC fluxes in the volume and on the surface}.}
\label{tab:1Dposition}
\begin{tabular}{c S[table-format=1.2e-2]
S[table-format=1.2e-2]
S[table-format=1.2e-2]
S[table-format=1.2e-2]
S[table-format=1.2e-2]
S[table-format=1.2e-2]
S[table-format=1.2e-2]
S[table-format=1.2e-2]}
\toprule
{$K$}
& {$H_1$} & {$H_2$} & {$H_3$} & {$H_4$}
& {$H_5$} & {$H_6$} & {$H_7$} & {$H_8$} \\
\midrule
2
& 5.04226290e-15
& 4.57821114e-14
& \multicolumn{1}{c}{---} & \multicolumn{1}{c}{---} & \multicolumn{1}{c}{---} & \multicolumn{1}{c}{---} & \multicolumn{1}{c}{---} & \multicolumn{1}{c}{---}
\\
4
& 4.81443589e-15
& 3.83481301e-14
& 4.26846690e-15
& 4.53824730e-15
& \multicolumn{1}{c}{---} & \multicolumn{1}{c}{---} & \multicolumn{1}{c}{---} & \multicolumn{1}{c}{---}
\\
8
& 2.50725366e-15
& 1.98419593e-14
& 9.61814156e-15
& 1.09490881e-14
& 1.91969691e-14
& 3.65771211e-14
& 3.79477876e-14
& 1.44905223e-14
\\
\bottomrule
\end{tabular}
\end{table}
\revOne{Further, for the test case with an uncertain position for $b(x,\xi)$ we purposefully create a scheme that is not well-balanced to highlight its importance. For this, we use the standard collocated DG flux discretization 
\begin{equation}
    \label{eq:standardFlux}
\mathbf{F}^{standard}(U) = \begin{pmatrix}
\hq\\
\M(\hq) \M^{-1}(\hh)  \hq 
+ \frac{1}{2}g  \M(\hh)  \hh
\end{pmatrix}    
\end{equation}
in the volume and on the surface.
Here, as in the deterministic case, the flux \eqref{eq:standardFlux} does not produce a provably well-balanced discretization \cite{winters2015comparison}.
Thus, the stochastic DG method will produce spurious waves on the order of the mesh size that pollute the solution and may lead to instability, e.g. due to negative water heights.
We use four wavelets $K=4$, polynomials of degree $3$, and a final time of $t=100$.
Results in Table~\ref{tab:nonWB} demonstrate such behavior where a coarse mesh may be unstable or have significant pollution of the $L_1$ well balanced error but, upon mesh refinement, these spurious errors are significantly reduced.}
\begin{table}[H]
\centering
\revOne{
\caption{\revOne{1D uncertain height, final time $100$, polynomial degree $3$, standard DG fluxes fluxes in the volume and on the surface.}}
\label{tab:nonWB}
\begin{tabular}{c l l
S[table-format=1.2e-2]
S[table-format=1.2e-2]
S[table-format=1.2e-2]
S[table-format=1.2e-2]}
\toprule
{$K$}
& {\# elements} & {$\Delta t$} & {$H_1$} & {$H_2$}
& {$H_3$} & {$H_4$} \\
\midrule
2
& 8
& 0.1
& \multicolumn{4}{c}{Crashes at time $\approx 78.8$}
\\
4
& 8
& 0.005
& 1.25051794e-04
& 1.50968495e-04
& 1.10686972e-04
& 1.20141417e-04
\\
8
& 16
& 0.1
& 8.01086049e-13
& 8.20768278e-13
& 5.14602480e-13
& 6.85845040e-13
\\
\bottomrule
\end{tabular}
}
\end{table}

\subsubsection{Entropy conservation and stability}\label{sec:eces1D}
To assess the entropy properties of the one-dimensional approximation we consider the domain $\Omega = [-1,1]$.
For the flow variables we use a deterministic initial condition where the initial mean value of the water height contains a discontinuity.
\[
\begin{aligned}
{H}_1(x,t=0) &= \left\{
\begin{array}{ll}
1, & \mbox{if } x\leq 0\\
0.5, & \mbox{if } x >0
\end{array}
\right.
\qquad
{H}_{2:K}(x,t=0) = 0,
\qquad
\mathbf{v}(x,t=0) = 0.
\end{aligned}
\]
together with a stochastic bottom topography with uncertain height
\[
b(x, \xi) =
\left\{
\begin{array}{ll}
\frac{1}{9}(\cos(5\pi x) + 2 + \xi)^2 & \mbox{if } |x| \leq 0.2\\[0.1cm]
\frac{1}{9}(1+\xi) & \mbox{otherwise}
\end{array}
\right.
\]
The setup is adapted from \cite{dai2023energy}.

First, we run the EC test with periodic boundary conditions.
For this test, we use polynomials of degree three, $N=3$ and 16 elements in the domain.
The EC flux and nonconservative terms from \eqref{eq_conservative_flux_dai} are used at all interior interfaces, so that the numerical approximation is nearly dissipation-free.
We run to a final time of $t_{\text{final}} = 0.65$.
Table~\ref{tab:ec1D} reports the entropy rate at the final time for increasing number of wavelets.
\begin{table}
	\centering
	\caption{Comparison of the integrated entropy rate at the final time $t_{\text{final}} = 0.65$ for the one-dimensional dam break test with periodic boundary conditions.
    All results used $N=3$ on a $16$ element mesh.}
	\label{tab:ec1D}
	\begin{tabular}{l S[table-format=1.2e-2]}
		\toprule
		\# Haar wavelets & \multicolumn{1}{c}{$\frac{1}{|\Omega|}\int S_t \, d\Omega$}\\
		\midrule
		$K=2$ & 4.01154804e-17 \\
		$K=4$ & 1.10154941e-16 \\
		$K=8$ & 6.34258271e-18 \\
		\bottomrule
	\end{tabular}
\end{table}

Next, we run a similar configuration that is entropy stable by adding local Lax-Friedrichs (LLF) type 
dissipation to the baseline EC flux.
\revOne{
The form of this numerical flux used throughout the numerical tests is
\begin{equation}
    \label{eq:LLF}
    F^*(U_L,U_R) = F^{\#,con}(U_L,U_R) - \frac{\lambda_{\max}}{2}(U_R - U_L),
\end{equation}
where the central part, $F^{\#,con}(U_L,U_R)$ from \eqref{eq_conservative_flux_dai} in 1D or \eqref{eq_conservative_flux_2d} in 2D, the left/right solution vectors $U_{L,R}$ and $\lambda_{\max}$ is the maximum eigenvalue computed at each solution state according to \eqref{eq:eigVals}.}
Here, wall boundary conditions are considered, which are neutrally stable.
We increase the resolution with $N=4$ and 64 elements and run to the same final time.
We also provide the evolution of the entropy over time in Figure~\ref{fig:es1D}.
We see that entropy decays module the effect of the outflow boundary conditions.
\begin{figure}[!ht]
    \centering 
\includegraphics[width=0.49\textwidth]{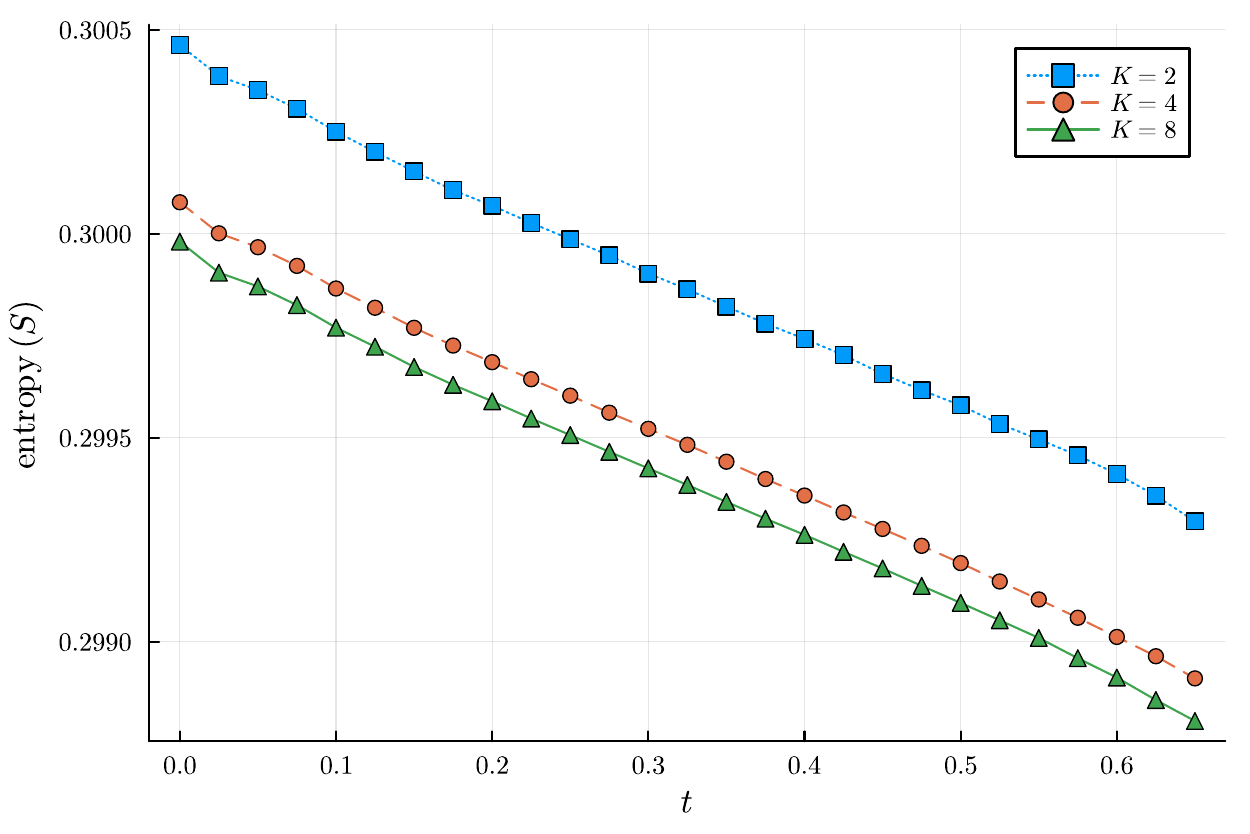}
\caption{Polynomials of degree $N=4$, $64$ elements. Evolution of the entropy for different number of wavelets using LLF type dissipation.}
\label{fig:es1D}
\end{figure}

Figure~\ref{fig:damBreak1D} shows the initial condition as well as the solution at $t_{\text{final}} = 0.65$ computed with $K=8$ Haar wavelets.
We observe that the stochastic bottom topography induces a water height
with uncertainties at later times that develops a leftward-going rarefaction wave and a rightward-going shock.
Though the ES method introduces dissipation, the solution is not overshoot-free.
\begin{figure}[!ht]
    \centering 
\includegraphics[width=0.49\textwidth]{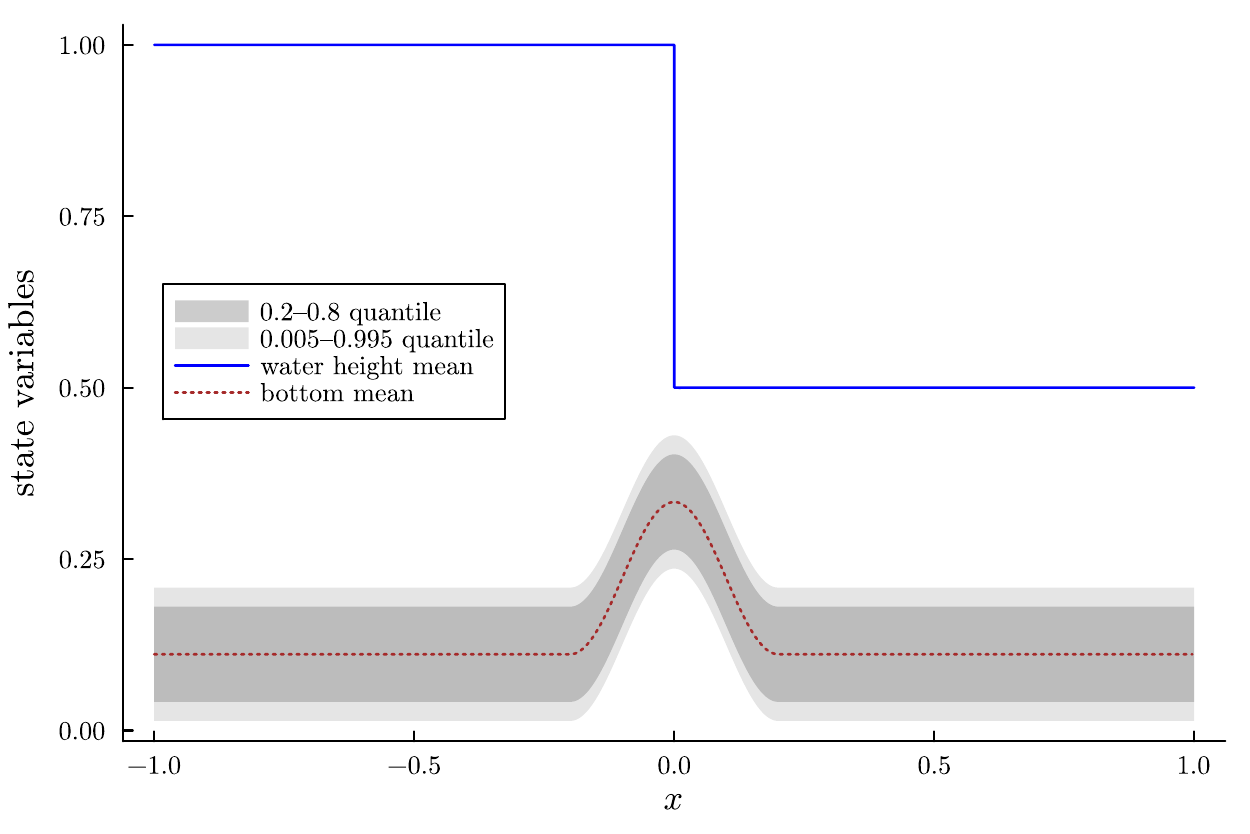}
\hfill
\includegraphics[width=0.49\textwidth]{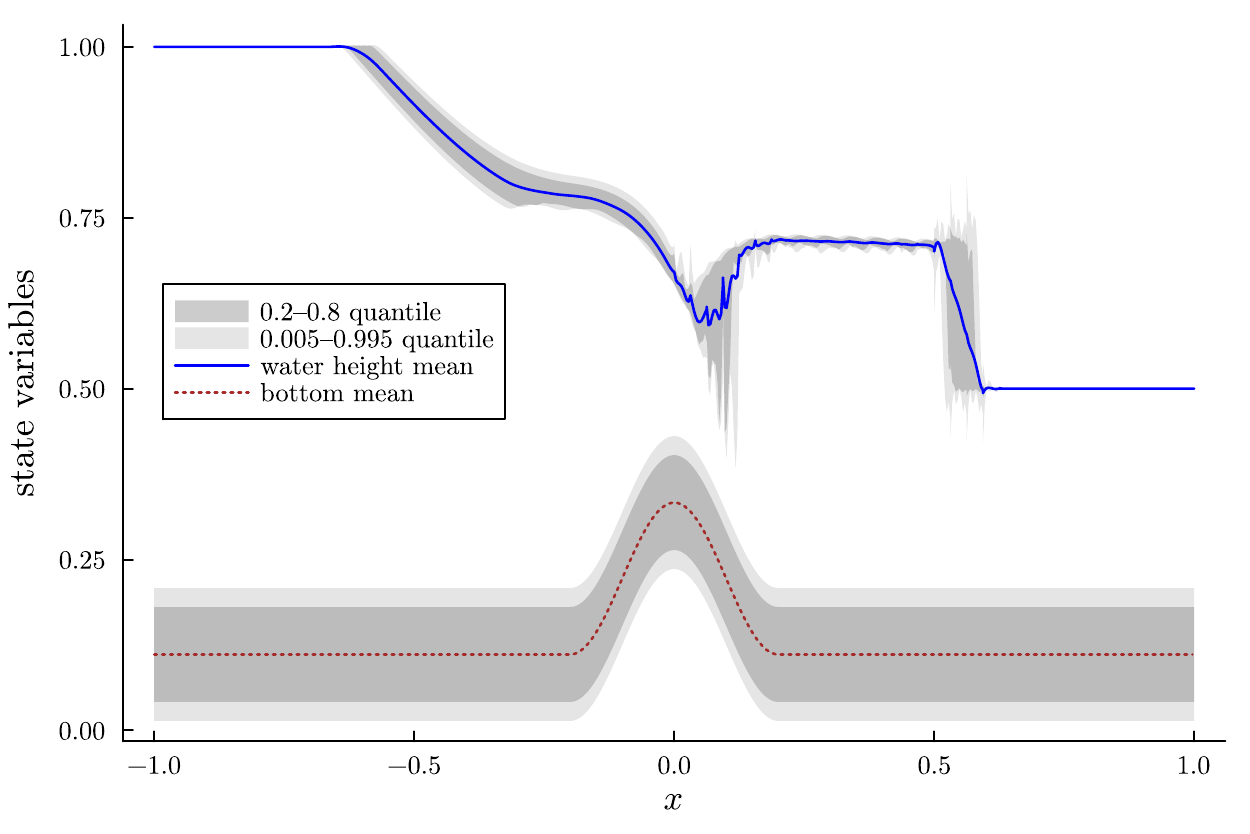}
\caption{Polynomials of degree $N=4$, $64$ elements, and $8$ wavelets. (left) Initial condition. (right) Mean solution / bottom topography and quantiles at the final time $t_{\text{final}} = 0.65$}
\label{fig:damBreak1D}
\end{figure}

\subsection{Two-dimensional problems}

We verify the high-order convergence order of the stochastic DG method with the method of manufactured solutions in Section~\ref{sec:conv2D}.
Next, in Section~\ref{sec:wb2D}, we demonstrate the well-balancedness property for different numbers of Haar wavelets.
Finally, we validate the entropy conservation and stability properties of the approximation with different dam break configurations in Section~\ref{sec:eces2D}.

\subsubsection{Convergence test}\label{sec:conv2D}

To examine the high-order spatial accuracy of the implementation we use the method of manufactured solutions.
For this, we consider two wavelets ($K=2$) as this is sufficient to exercise all terms present in \eqref{eq:SG_SWE_cons_2d}.
The domain is taken to be $[0,1]^2$ with periodic boundary conditions.
We integrate to a final time $t_{\text{final}} = 0.5$ and select a small time step $\Delta t = 5 \times 10^{-4}$ to ensure that the spatial errors dominate the approximation.

The manufactured solution \revOne{is chosen as the} initial conditions for the water height and velocities are
\begin{equation}
\label{eq:MMS2D}
\begin{aligned}
\mathbf{H}(x,y,t) &= 
\begin{pmatrix}
    1 + 0.05 \cos(2\pi x) \cos(2\pi y) \cos(2\pi t)\\[0.05cm]
    0.05 + 0.05 \sin(\pi x) \sin(\pi y) \cos(2\pi t)
\end{pmatrix},
\quad
\mathbf{v}_{\mathfrak{1}}(x,y,t)
&= 
\begin{pmatrix}
   0.64 \\[0.05cm] 0 
\end{pmatrix},
\quad
\mathbf{v}_{\mathfrak{2}}(x,y, t)
&= 
\begin{pmatrix}
    -0.75 \\[0.05cm] 0
\end{pmatrix}.
\end{aligned}
\end{equation}
\revOne{Additionally, we choose t}he bottom topography for the manufactured solution is also taken to have two modes
\begin{equation}
\label{eq:MMS2D-bottom}
\mathbf{b}(x, y) = 
\begin{pmatrix}
    0.7 + 0.05 \sin(2\pi x) \sin(2\pi y)\\[0.05cm]
    0.15 + 0.05 \cos(2\pi x) \cos(2\pi y)
\end{pmatrix}.
\end{equation}
\revOne{From this chosen solution ansatz w}e compute the conservative variable vector of the quantities in space and time where $\mathbf{h} = \mathbf{H} - \mathbf{b}$ and
\begin{equation}
\label{eq:consVec2DMMS}
\mathbf{u} = \left(\mathbf{h}, \mathbf{h}\mathbf{v}_{\mathfrak{1}},\mathbf{h}\mathbf{v}_{\mathfrak{2}}\right)^T.
\end{equation}

\revOne{The manufactured solution in \eqref{eq:consVec2DMMS} and bottom topography \eqref{eq:MMS2D-bottom} do not exactly solve \eqref{eq:SG_SWE_cons_2d}. However, it}
is straightforward to compute the temporal and spatial derivatives of the manufactured solution terms in \eqref{eq:consVec2DMMS} and \eqref{eq:MMS2D-bottom}.
But, to find expressions for the coupling of all these derivatives in the stochastic DG approximation and create an appropriate source term for the manufactured solution is complex and untractable to do explicitly.
As such, we approximate the directional derivative of each flux Jacobian term by analytically computing the derivatives of the conservative variables and evaluating the conservative fluxes from \eqref{eq:SG_SWE_cons_2d}.
For instance, to approximate the Jacobian in the $x$-direction we take
\begin{equation}
   \left(\mathbf{f}_{\mathfrak{1}}\right)_x 
   =
   \mathbf{J}_{\mathfrak{1}}
   \approx
   \frac{\mathbf{f}_{\mathfrak{1}}\left(\mathbf{u} + \varepsilon\mathbf{u}_x\right) - \mathbf{f}_{\mathfrak{1}}\left(\mathbf{u} - \varepsilon\mathbf{u}_x\right)}{2\varepsilon},
\end{equation}
where $\varepsilon = 10^{-8}$.
Then we create the source term of the manufactured solution for this complex test case to be
\[
\mathbf{S}
=
\mathbf{u}_t + \mathbf{J}_{\mathfrak{1}} \mathbf{u}_x + \mathbf{J}_{\mathfrak{2}} \mathbf{u}_y +
\begin{pmatrix}
    \mathbf{0}\\
    g \M(\mathbf{h}) \frac{\partial}{\partial x} \mathbf{b}\\
    g \M(\mathbf{h}) \frac{\partial}{\partial y} \mathbf{b}
\end{pmatrix}.
\]

\revOne{The manufactured solution, \eqref{eq:MMS2D} and \eqref{eq:MMS2D-bottom}, and subsequent convergence study is intended as a verification of the newly developed 2D stochastic DG spectral element discretization. Its focus on convergence in physical space is paramount. The manufactured solution is exactly represented by two stochastic modes ($K=2$).}

To investigate the convergence order of the approximation we fix the polynomial degree $N$ of the stochastic Galerkin approximation and take an increasing number of elements in each spatial direction in the domain.
For this, we use the EC flux \eqref{eq_conservative_flux_2d} in the volume of the flux differencing formulation and this EC flux with local Lax-Friedrichs-type dissipation at the interfaces of each element.
This mesh refinement study measures convergence in terms of the discrete $L_2$ error of the approximation against the manufactured solution described above.
The experimental order of convergence (EOC) is computed as
\[
\text{EOC}=\log_{10}\left(\frac{\|err_{(m)}\|_2}{\|err_{(n)}\|_2}\right)/\log_{10}\left(2\right)\;,
\]
where $m$ and $n$ denote two successive mesh resolutions.
We report the convergence results for polynomial degree $N=3$ in Table~\ref{tab:conv3} and $N=4$ in Table~\ref{tab:conv4}, respectively.
From both tables we see that the stochastic DG method is high-order convergent in space, with a convergence order of approximately $N+1$ under mesh refinement.
\begin{table}[htbp]
\centering
\caption{Convergence of discrete $L_2$ error for manufactured solution test with polynomial degree $N=3$ and $\Delta t = 5\times10^{-4}$.}
\resizebox{0.97\textwidth}{!}{%
\label{tab:conv3}
\begin{tabular}{
l
S[table-format=1.2e-2] S[table-format=1.2]
S[table-format=1.2e-2] S[table-format=1.2]
S[table-format=1.2e-2] S[table-format=1.2]
S[table-format=1.2e-2] S[table-format=1.2]
S[table-format=1.2e-2] S[table-format=1.2]
S[table-format=1.2e-2] S[table-format=1.2]
}
\toprule
& \multicolumn{2}{c}{$h_0$}
& \multicolumn{2}{c}{$h_1$}
& \multicolumn{2}{c}{$q_{10}$}
& \multicolumn{2}{c}{$q_{11}$}
& \multicolumn{2}{c}{$q_{20}$}
& \multicolumn{2}{c}{$q_{21}$} \\
\cmidrule(lr){2-3}\cmidrule(lr){4-5}\cmidrule(lr){6-7}
\cmidrule(lr){8-9}\cmidrule(lr){10-11}\cmidrule(lr){12-13}
\# elem.
& {$L_2$ error} & {EOC} & {$L_2$ error} & {EOC}
& {$L_2$ error} & {EOC} & {$L_2$ error} & {EOC}
& {$L_2$ error} & {EOC} & {$L_2$ error} & {EOC} \\
\midrule
$8^2$ & 6.58e-05 & {} & 6.87e-05 & {} & 1.06e-04 & {} & 1.05e-04 & {} & 1.28e-04 & {} & 1.27e-04 & {} \\
$16^2$ & 3.72e-06 & 4.14 & 3.79e-06 & 4.18 & 7.50e-06 & 3.82 & 7.45e-06 & 3.81 & 8.96e-06 & 3.83 & 8.89e-06 & 3.83 \\
$32^2$ & 2.62e-07 & 3.83 & 2.64e-07 & 3.84 & 5.45e-07 & 3.78 & 5.42e-07 & 3.78 & 6.31e-07 & 3.83 & 6.28e-07 & 3.82 \\
$64^2$ & 1.91e-08 & 3.78 & 1.94e-08 & 3.77 & 3.86e-08 & 3.82 & 3.84e-08 & 3.82 & 4.45e-08 & 3.83 & 4.43e-08 & 3.83 \\
\bottomrule
\end{tabular}
}
\end{table}
\begin{table}[htbp]
\centering
\caption{Convergence of discrete $L_2$ error for manufactured solution test with polynomial degree $N=4$ and $\Delta t = 5\times10^{-4}$.}
\resizebox{0.97\textwidth}{!}{%
\label{tab:conv4}
\begin{tabular}{
l
S[table-format=1.2e-2] S[table-format=1.2]
S[table-format=1.2e-2] S[table-format=1.2]
S[table-format=1.2e-2] S[table-format=1.2]
S[table-format=1.2e-2] S[table-format=1.2]
S[table-format=1.2e-2] S[table-format=1.2]
S[table-format=1.2e-2] S[table-format=1.2]
}
\toprule
& \multicolumn{2}{c}{$h_0$}
& \multicolumn{2}{c}{$h_1$}
& \multicolumn{2}{c}{$q_{10}$}
& \multicolumn{2}{c}{$q_{11}$}
& \multicolumn{2}{c}{$q_{20}$}
& \multicolumn{2}{c}{$q_{21}$} \\
\cmidrule(lr){2-3}\cmidrule(lr){4-5}\cmidrule(lr){6-7}
\cmidrule(lr){8-9}\cmidrule(lr){10-11}\cmidrule(lr){12-13}
\# elem.
& {$L_2$ error} & {EOC} & {$L_2$ error} & {EOC}
& {$L_2$ error} & {EOC} & {$L_2$ error} & {EOC}
& {$L_2$ error} & {EOC} & {$L_2$ error} & {EOC} \\
\midrule
$8^2$ & 1.17e-05 & {} & 1.15e-05 & {} & 3.10e-05 & {} & 3.10e-05 & {} & 3.68e-05 & {} & 3.68e-05 & {} \\
$16^2$ & 2.84e-07 & 5.36 & 2.83e-07 & 5.35 & 7.13e-07 & 5.44 & 7.12e-07 & 5.44 & 8.37e-07 & 5.46 & 8.36e-07 & 5.46 \\
$32^2$ & 9.00e-09 & 4.98 & 9.27e-09 & 4.93 & 1.18e-08 & 5.92 & 1.18e-08 & 5.92 & 1.37e-08 & 5.93 & 1.37e-08 & 5.94 \\
$64^2$ & 3.79e-10 & 4.57 & 3.98e-10 & 4.54 & 4.37e-10 & 4.75 & 4.40e-10 & 4.74 & 4.95e-10 & 4.79 & 4.95e-10 & 4.79 \\
\bottomrule
\end{tabular}
}
\end{table}

\subsubsection{Well-balancing}\label{sec:wb2D}

Using polynomials of degree $N=3$ in each spatial direction, $4\times 4$ Cartesian elements for the domain $[0,20]^2$, and the uncertain position bottom topography $b(x,y,\xi_1, \xi_2)$ described in \ref{sec:uncertain_bottom2D} with two, four, and eight Haar wavelets.
The initial conditions are
\[
\begin{aligned}
\mathbf{H}(x,y,t=0) &= \left\{
\begin{array}{ll}
\frac{4}{3}, & \mbox{first wavelet}\\
0, & \mbox{otherwise}
\end{array}
\right.
\qquad
\mathbf{v}_{\mathfrak{1}}(x,y,t=0) = \mathbf{v}_{\mathfrak{2}}(x,y,t=0) = 0.
\end{aligned}
\]
Again, the uncertainty in the bottom topography $b(x, y, \xi_1, \xi_2)$ propagates into the primitive variables $\mathbf{h} = \mathbf{H} - b$.
Integrating up to a final time of $t_{\text{final}} = 100$ we show the discrete $L_1$ well-balancedness error in Table~\ref{tab:2Dposition}.
\revTwo{As in the 1D well-balancedness testing, we also ran these tests using the ES flux at element interfaces and obtained similar errors.
}
\begin{table}[h!]
\centering
\caption{2D uncertain position, final time $100$, polynomial degree $3$, time step $0.1$, \revTwo{EC flux in the volume and on the surface}.}
\label{tab:2Dposition}
\begin{tabular}{
c
S[table-format=1.2e-2]
S[table-format=1.2e-2]
S[table-format=1.2e-2]
S[table-format=1.2e-2]
S[table-format=1.2e-2]
S[table-format=1.2e-2]
S[table-format=1.2e-2]
S[table-format=1.2e-2]
}
\toprule
{$K$}
& {$H_1$} & {$H_2$} & {$H_3$} & {$H_4$}
& {$H_5$} & {$H_6$} & {$H_7$} & {$H_8$} \\
\midrule
2
& 2.99374723e-15
& 1.58521704e-15
& \multicolumn{1}{c}{---}
& \multicolumn{1}{c}{---}
& \multicolumn{1}{c}{---}
& \multicolumn{1}{c}{---}
& \multicolumn{1}{c}{---}
& \multicolumn{1}{c}{---}
\\
4
& 2.37387270e-15
& 1.24888904e-15
& 9.03966372e-16
& 8.01696125e-16
& \multicolumn{1}{c}{---}
& \multicolumn{1}{c}{---}
& \multicolumn{1}{c}{---}
& \multicolumn{1}{c}{---}
\\
8
& 2.15635765e-15
& 9.40997050e-16
& 6.89736806e-16
& 6.85053221e-16
& 7.22426880e-16
& 5.28574291e-16
& 6.39784093e-16
& 7.83635571e-16
\\
\bottomrule
\end{tabular}
\end{table}

\subsubsection{Entropy conservation/stability}\label{sec:eces2D}

We first consider the domain $\Omega = [0,20]^2$.
For the flow variables we use a deterministic initial condition where the initial mean value of the water height contains a circular discontinuity.
\[
\begin{aligned}
{H}_1(x,y,t=0) &= \left\{
\begin{array}{ll}
2, & \mbox{if } \|\mathbf{x} - 10\|_2\leq 1.5\\
1.5, & \mbox{otherwise }
\end{array}
\right.
\qquad
H_{2:K}(x,y,t=0) = 0,
\qquad
\mathbf{v}_{\mathfrak{1}}(x,y, t=0)
=
\mathbf{v}_{\mathfrak{2}}(x,y, t=0)
=
0.
\end{aligned}
\]
together with a stochastic bottom topography with uncertain position described in Appendix~\ref{sec:uncertain_bottom2D}.
We run the entropy conservative (EC) test with periodic boundary conditions.
For this test, we use polynomials of degree three, $N=3$ and 64 elements in the domain.
The EC flux and nonconservative terms from \eqref{eq_conservative_flux_2d} and \eqref{eq_conservative_source_2d} are used at all interior interfaces, so that the numerical approximation is nearly dissipation-free.
We run to a final time of $t_{\text{final}} = 0.5$.
Table~\ref{tab:ec2D} reports the entropy rate at the final time for increasing number of wavelets.
\begin{table}
	\centering
	\caption{Comparison of the integrated entropy rate at the final time $t_{\text{final}} = 0.5$ for the two-dimensional dam break test with periodic boundary conditions.
    All results used $N=3$ on a $64$ element mesh.}
	\label{tab:ec2D}
	\begin{tabular}{l S[table-format=1.2e-2]}
		\toprule
		\# Haar wavelets & \multicolumn{1}{c}{$\frac{1}{|\Omega|}\int S_t \, d\Omega$}\\
		\midrule
		$K=2$ & 1.87877383e-19 \\
		$K=4$ & 3.16369280e-18 \\
		$K=8$ & 1.90700583e-18 \\
		\bottomrule
	\end{tabular}
\end{table}

Next, we run a similar configuration that is ES by adding LLF type 
dissipation to the baseline EC flux.
Here, outflow boundary conditions are considered in the $x$-direction and periodic boundary conditions are used in the $y$-direction.
We increase the resolution with $N=4$ and 64 elements and run to the same final time.
We adapt initial conditions from \cite{Dai_etal_22} that are a small perturbation of a deterministic water height and a bottom topography with uncertain position.
The deterministic initial conditions for the water height and velocities on the domain $[0,2]^2$ are
\[
\begin{aligned}
{H}_1(x,y,t=0) &= \left\{
\begin{array}{ll}
1.01, & \mbox{if  } 0.05 \leq x \leq 0.15\\
1, & \mbox{otherwise }
\end{array}
\right.
\qquad
H_{2:K}(x,y,t=0) = 0,
\qquad
\mathbf{v}_{\mathfrak{1}}(x,y, t=0)
=
\mathbf{v}_{\mathfrak{2}}(x,y, t=0)
=
0.
\end{aligned}
\]
The bottom topography with uncertain position is
\[
b(x,y, \xi_1,\xi_2) = 0.8 \, \text{exp}(-5 (x - 0.9 + 0.1 \xi_1)^2 - 50 (y - 1.0 + 0.1 \xi_2)^2)
\]
and computed with a strategy similar to that outlined in Appendix~\ref{sec:uncertain_bottom2D}.
The final time is taken to be $t_{\text{final}} = 1.8$.

Figure~\ref{fig:damBreak2D} shows the computed mean water height at four times, $t = 0.0, 0.6, 1.2,$ and $1.8$ with $K=8$ Haar wavelets.
Also present in Figure~\ref{fig:damBreak2D} are gray disks that visualize the standard deviation, $\sigma_{\text{sd}}$ of the approximation.
We also provide the evolution of the entropy over time in Figure~\ref{fig:es2D}.
We see that entropy decays module the effect of the outflow boundary conditions.
\begin{figure}[!ht]
    \centering 
\includegraphics[width=0.49\textwidth]{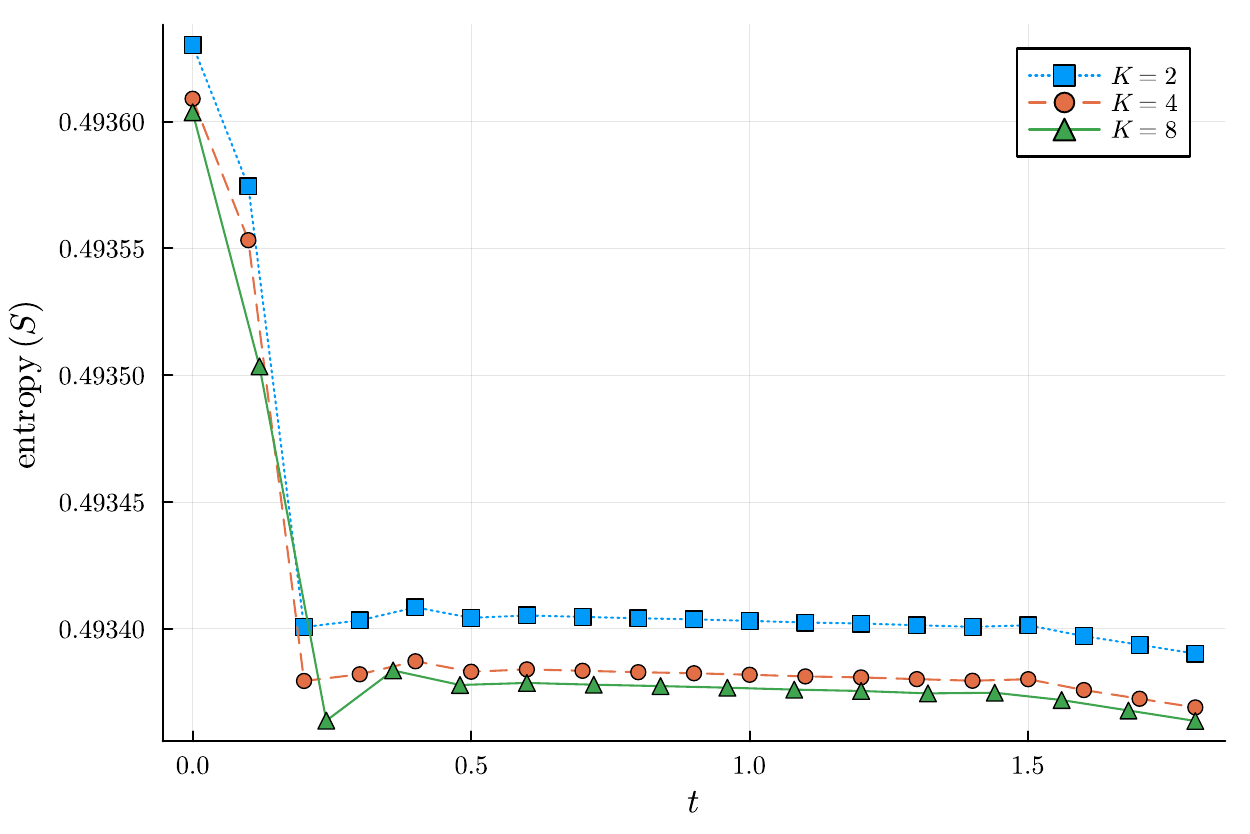}
\caption{Polynomials of degree $N=4$, $64$ elements. Evolution of the entropy for different number of wavelets using LLF type dissipation.}
\label{fig:es2D}
\end{figure}
We observe that the stochastic bottom topography induces a water height
with uncertainties particularly near the ``peak'' of the Gaussian bump bottom topography.
This is particularly evident at later times after the right traveling wave interacts with the uncertain bottom topography.
\begin{figure}[!ht]
    \centering
    \subfloat[$t=0.0$, max $\sigma_{\text{sd}}$: $0.0$]
    {
\includegraphics[width=0.495\textwidth]{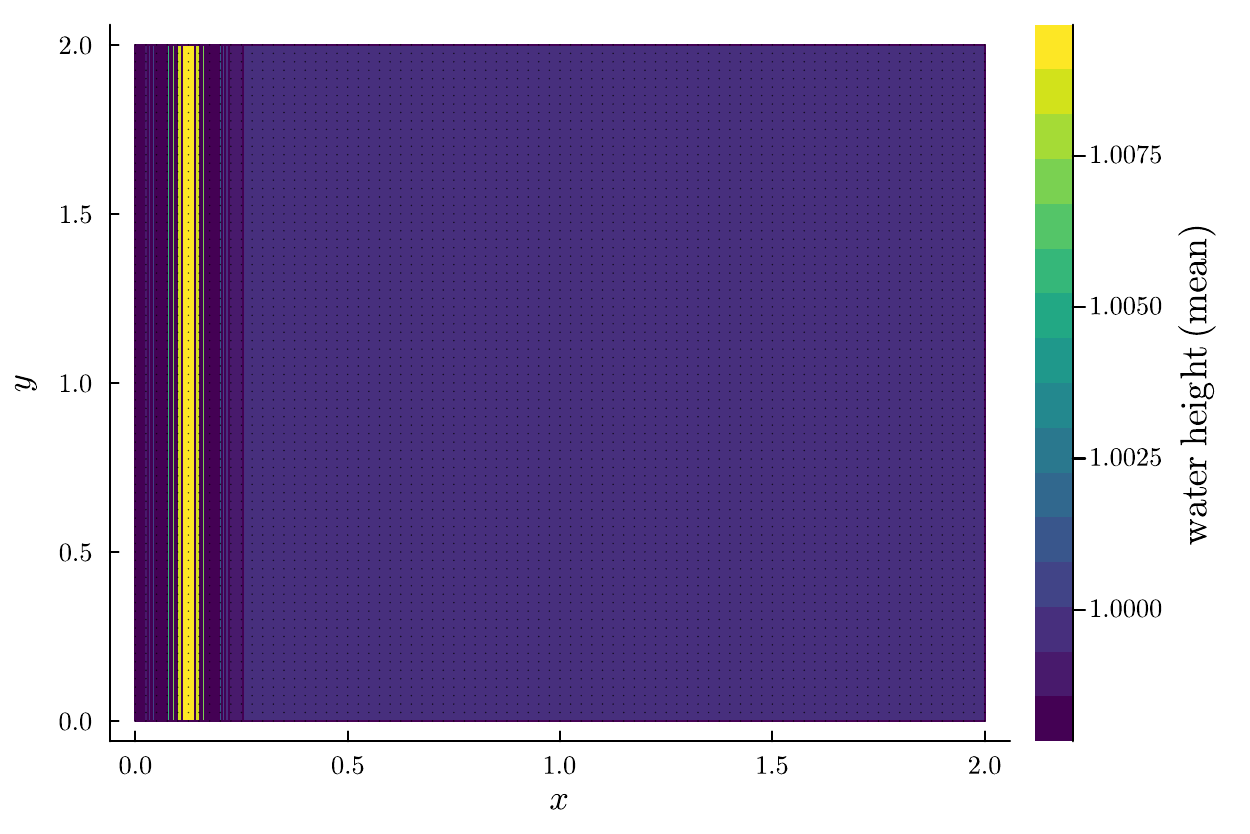}
}
    \subfloat[$t=0.6$, max $\sigma_{\text{sd}}$: $8.91\cdot 10^{-4}$]
    {
\includegraphics[width=0.495\textwidth]{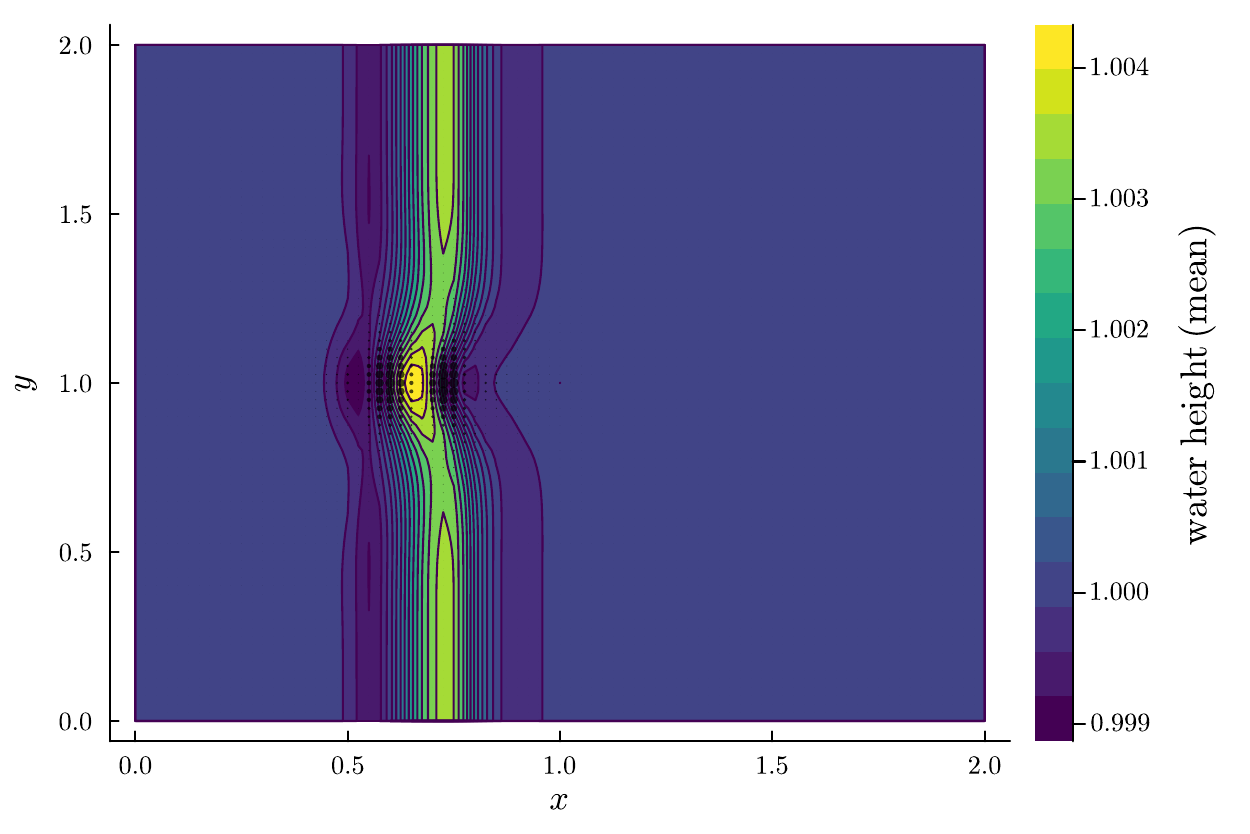}
}
\\
\subfloat[$t=1.2$, max $\sigma_{\text{sd}}$: $7.99\cdot 10^{-4}$]
    {
    \includegraphics[width=0.495\textwidth]{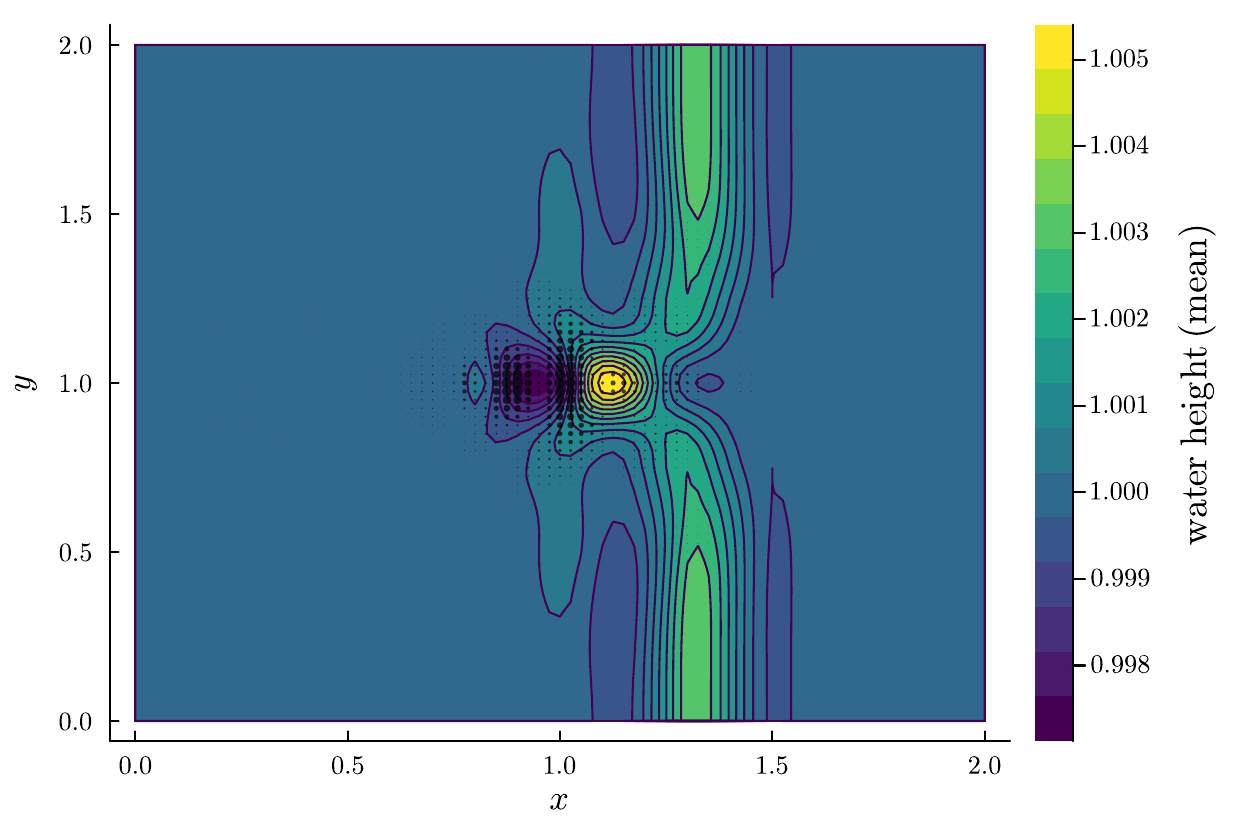}
}
\subfloat[$t=1.8$, max $\sigma_{\text{sd}}$: $2.37\cdot 10^{-4}$]
    {
    \includegraphics[width=0.495\textwidth]{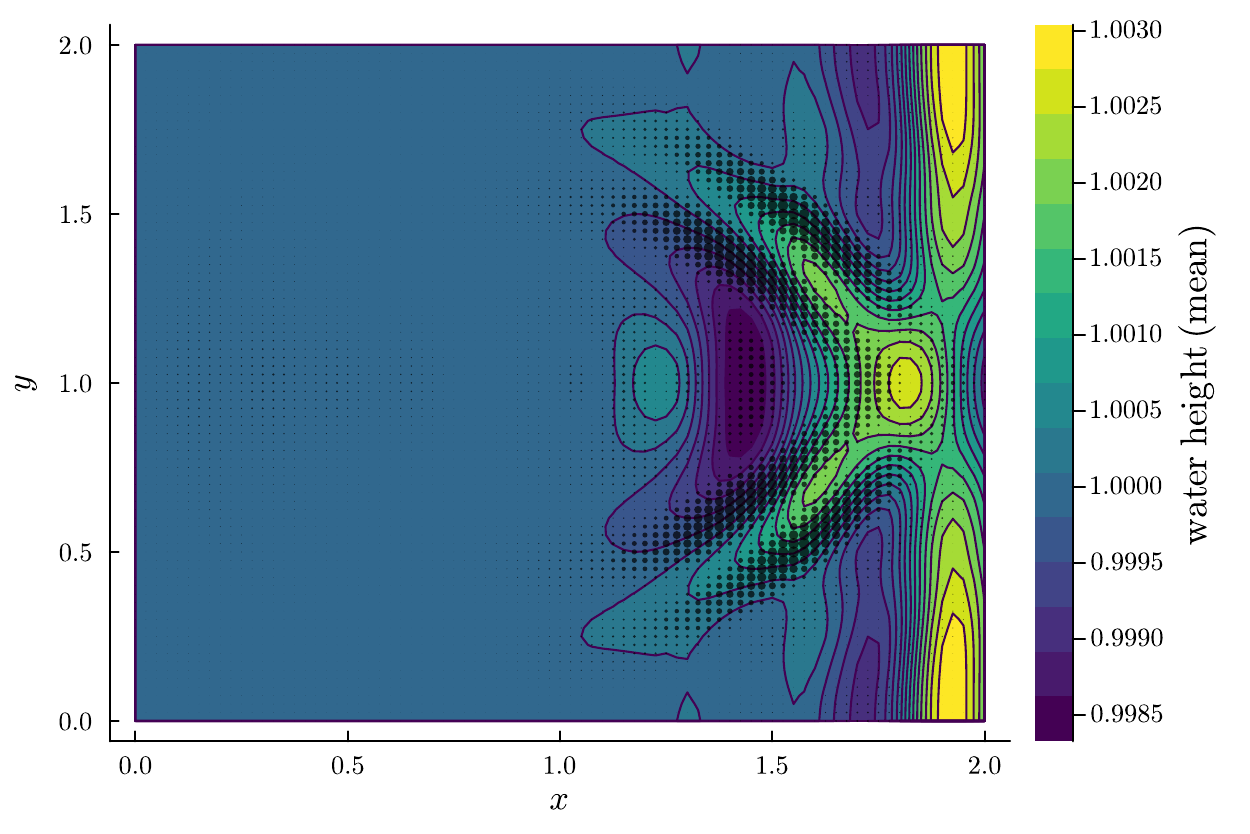}
    }
\caption{Polynomials of degree $N=4$, $64$ elements, and $8$ wavelets. Mean water surface contours with circular glyphs where the radii of the glyphs indicate the magnitude of the standard deviation.}
\label{fig:damBreak2D}
\end{figure}

%% file: Appendix.tex
\section{Appendix: details of derivations} \label{sec_Appendix}
\subsection{Proof of Theorem \ref{th_identical}}\label{sub_sub_proof_1} 
We show that all quantities are identical by elementary  calculations. 
We start with the entropy flux pair. 
In conservative variables it has been defined in \cite{dai2023energy} through \eqref{eq_ent_flux_dai}, whereas using Roe variables we have the identity \eqref{eq_ent_flux_gerster}.  The first and the last term of the entropy $\hat{\eta}_{\textbf{Roe}}$ can be shown to be identical to entries of $\hat{\eta}$ via the Roe identities
$
 \hal^T \M(\hal)^T \M(\hal) \hal = \hh^T \hh$ and $
 g\hal^T \M(\hal) \hb= g\hh^T \hb.
$
 For the second term of $\hat{\eta}_{\text{Roe}}$ in~\eqref{eq_ent_flux_gerster}, i.e., $\hbe^T \hbe$, the corresponding conservative variable formulation term is
$\hq^{T}\M^{-1}(\hh)\hq$. First, we note 
that $\hbe= \M^{-1}(\hh^{1/2})\hq$ and using the properties of the basis with \eqref{eq:basis_P1} in the last equality below, we obtain  directly 
$$
\hbe^T \hbe = 
(\M^{-1}(\hh^{1/2})\hq)^T \M^{-1}(\hh^{1/2})\hq  = \hq^T \M^{-1}(\hh^{1/2})  \M^{-1}(\hh^{1/2})\hq = \hq^{T}\M^{-1}(\hh)\hq.
$$
In terms of the entropy fluxes, the identity between the first terms $\hat{H}$ and $\hat{H}_{\text{Roe}}$ is shown by again using the property~\eqref{eq:basis_P1} 
together with the definition of $\hal$ and $\hbe$, we get
\begin{align*}
& \frac{1}{2} \hbe^T \M(\hbe)\M^{-1}(\hal) \hbe=
\frac{1}{2} \hq^T \M^{-1}(\hh^{1/2}) \M( \M^{-1} (\hh^{1/2} )\hq) \M^{-1}(\hh^{1/2}) \M^{-1}(\hh^{1/2}) \hq\\
=& 
\frac{1}{2} \hq^T \M^{}(\hh^{-1/2}) \M( \hq)  \M (\hh^{-1/2} )  \M(\hh^{-1}) \hq
=  \frac{1}{2} \hq^T \M^{}(\hh^{-1}) \M( \hq)  \M(\hh^{-1}) \hq= 
 \frac{1}{2} \hv^T\M( \hq) \hv.
\end{align*}
For the second and last terms of $H$ and $\hat{H}_{\text{Roe}}$, it follows from the definitions of the SG projections of the Roe variables that
$$
 \hal^T \M(\hal)^T \M(\hal) \hbe= \hh^T \hq \text{ and }  \hbe^T \M(\hal)^T \hb= \hq^T \hb.
 $$
This demonstrates that the entropy-flux pairs \eqref{eq_ent_flux_dai} and \eqref{eq_ent_flux_gerster} are identical. Due to the uniqueness of derivates, we obtain directly that the entropy variables are also identical and with the formula \eqref{eq_potential}
for the corresponding potential.

\subsection{Proof of Theorem \ref{th_entropy_conservative}}
\label{sub_ap_2}
\textbf{Step 1} of Subsection \ref{subsec_two_point_Dai}:\\
Represent the flux function $F$ of SG-SW in \eqref{eq:SG_SWE_cons} in entropy variables $\hat{w}$ assuming $b=0$. Using
$\hh= \frac{1}{g} \left( \hw_1+\frac12 \M(\hw_2)\hw_2 \right)$, we get 
$\hF_1(\hw)= \hq= \frac{1}{g} \left( \hw_1+\frac12 \M(\hw_2)\hw_2 \right) $
and by recalling that $ \M^2(b)a = \M(a)\M(b) b$,  we obtain 
$$\hF_2(\hw)= \frac{3}{2g}\M(\hw_1) \M(\hw_2)\hw_2 +  \frac{5}{8g} \M^3(\hw_2)\hw_2  + \frac{1}{2g}\M(\hw_1)\hw_1,$$
$$
\hat{H}(\hw)=\frac{3}{2g}\hw_2^T\M^2(\hw_2)\hw_1 + \frac{1}{2g}\hw_2^T\M^3(\hw_2)\hw_2 + \frac{1}{g}\hw_2^T\M(\hw_1)\hw_1,
$$
and 
\begin{equation}\label{en_potential}
\Phi(\hw) =  \frac{1}{2g}\hw_1^T \left[ \M(\hw_1)
+ \M^2(\hw_2) \right] \hw_2
+ \frac{1}{8g}\hw_2^T\M^3(\hw_2)\hw_2.
\end{equation}
We have to describe the jump in the potential in entropy variables. Therefore, we introduce the following identities for any $V \in \mathbb{R}^K$: 
\begin{align*}
\jump{\M^3(V)V} =& \M(\avg{V})\jump{\M^2(V)V} + \M(\jump{V})\avg{\M^2(V)V}\\
=& \M(\avg{V}) \left( \M(\jump{V})\avg{\M(V)V} + \M(\avg{V})\jump{\M(V)V} \right) + \M(\jump{V})\avg{\M^2(V)V}\\
=& \M(\avg{V})\M(\jump{V})\avg{\M(V)V} + \M^{2}(\avg{V})\M(\jump{V})\avg{V} + \M^{3}(\avg{V})\jump{V} + \M(\jump{V})\avg{\M^2(V)V}\\
=& \M(\jump{V})\M(\avg{V})\avg{\M(V)V}  + 2\M^{3}(\avg{V})\jump{V} + \M(\jump{V})\avg{\M^2(V)V},
\end{align*}
$$
\jump{\M^2(V)V}= \jump{\M(V)} \avg{\M(V) V} + \avg{\M(V)}\jump{\M(V) V}= \jump{\M(V)} \avg{\M(V) V} + \avg{\M(V)}\jump{\M(V)} \avg{V} + \avg{\M(V)} \cdot \avg{\M(V)} \jump{ V}, % \jump{\M(w)} \avg{\M (w)w} +\avg{\M^2(w)} \jump{w}
$$
and
\begin{align*}
\jump{V^T \M^3(V)V} =& \jump{V^T}\avg{\M^3(V)V} + \avg{V}^T \jump{\M^3(V)V}\\
=& \jump{V^T}\avg{\M^3(V)V} + \jump{V^T} \M^2(\avg{V})\avg{\M(V)V}
+ 2 \jump{V^T} \M^3(\avg{V})\avg{V} + \jump{V^T}\M(\avg{V})\avg{\M^2(V)V}.
\end{align*}
Then, the jump in the entropy potential~\eqref{en_potential} can be written
\begin{align}
\jump{\Phi} = \frac{1}{2g} \left( 
\jump{\hw_1^T}\avg{\M(\hw_{1})\hw_2} + \avg{\hw_1^T}\M(\jump{\hw_1})\avg{\hw_2} + \avg{\hw_1^T}\M(\avg{\hw_1})\jump{\hw_2}
+\jump{\hw_1^T}\avg{\M^2(\hw_2)\hw_2} \right) \\
+ \frac{1}{2g}\left(
 \avg{\hw_1^T}\M(\jump{\hw_2})\avg{\M(\hw_2)\hw_2} + 2 \avg{\hw_1^T}\M^2(\avg{\hw_2})\jump{\hw_2}
\right)\\
+\frac{1}{8g}\jump{\hw_2^T}\left( 
\avg{\M^3(\hw_2)\hw_2} + \M^2(\avg{\hw_2}) \avg{\M(\hw_2)\hw_2} + 2\M^3(\avg{\hw_2})\avg{\hw_2} + \M(\avg{\hw_2}) \avg{\M^2(\hw_2)\hw_2}
\right).
\end{align}
Identifying terms that have either factors $\jump{\hw_1^T}$ or $\jump{\hw_2^T}$, we split $\jump{\Phi}$ according to:
\begin{equation}
\begin{aligned}
\jump{\Phi_1} &= \frac{1}{2g}\jump{\hw_1^T} 
\left(\avg{\M(\hw_1)\hw_2} + \M(\avg{\hw_1}) \avg{\hw_2}
+ \avg{\M^2(\hw_2)\hw_2}\right),\\[0.1cm]
\jump{\Phi_2} &= 
\frac{1}{2g}\jump{\hw_2^T} 
\left( \M(\avg{\hw_1})\avg{\hw_1} +\M(\avg{\hw_1})\avg{\M(\hw_2)\hw_2} + 2 \M^2(\avg{\hw_2})\avg{\hw_1}\right. \\[0.1cm]
&\quad\left.
+\frac{1}{4}\left( 
\avg{\M^3(\hw_2)\hw_2} + \M^2(\avg{\hw_2})\avg{\M(\hw_2)\hw_2} + 2\M^3(\avg{\hw_2})\avg{\hw_2} + \M(\avg{\hw_2})\avg{\M^2(\hw_2)\hw_2} \right) 
 \right).
\end{aligned}
\end{equation}
Hence, by using \eqref{eq_Tadmor}, we obtain an entropy conservative numerical flux by comparison. It is given by  
\begin{equation*}
\begin{aligned}
\hF^{\#}_{1}(\hw) &= \frac{1}{2g} 
\left(\avg{\M(\hw_1)\hw_2} + \M(\avg{\hw_1}) \avg{\hw_2}
+ \avg{\M^2(\hw_2)\hw_2}\right),\\[0.1cm]
F^{\#}_{2}(\hw) &= \frac{1}{2g} 
\left( \M(\avg{\hw_1})\avg{\hw_1} +\M(\avg{\hw_1})\avg{\M(\hw_2)\hw_2} + 2 \M^2(\avg{\hw_2})\avg{\hw_1}\right. \\[0.1cm]
&\quad\left.
+\frac{1}{4}\left( 
\avg{\M^3(\hw_2)\hw_2} + \M^2(\avg{\hw_2})\avg{\M(\hw_2)\hw_2} + 2\M^3(\avg{\hw_2})\avg{\hw_2} + \M(\avg{\hw_2})\avg{\M^2(\hw_2)\hw_2} \right) 
 \right).
\end{aligned}
\end{equation*}
\textbf{Step 2:}
Using the identities~\eqref{eq:entropy_variable_Dai}
with $\hat{b}=0$, we obtain the following expressions :
\begin{equation*}
\begin{aligned}
F^{\#}_{1}(\hu) &= \frac{1}{2g} \left(
\frac{1}{2}\avg{\M^2(\hv)\hv} + g\avg{\M(\hv)\hh} -\frac{1}{2}\avg{\M^2(\hv)} \ \avg{\hv} + g\M(\avg{\hv}) \avg{\hh}
\right), \\
F^{\#}_{2}(\hu) &= \frac{1}{2g} \left(
\frac{1}{4} \avg{\M^2(\hv)}\ \avg{\M(\hv)\hv} + g^2 \M(\avg{\hh})\avg{\hh} - g\avg{\M^2(\hv)}\ \avg{\hh}
-\frac{1}{2}\avg{\M^2(\hv)} \ \avg{\M(\hv)\hv} + g\M(\avg{\hh})\ \avg{\M(\hv)\hv} \right. \\
& \quad \left. 
-\M^2(\avg{\hv})\ \avg{\M(\hv)\hv}
+ 2g\M^2(\avg{\hv})\ \avg{\hh}
+\frac{1}{4}\left( \avg{\M^3(\hv)\hv} + \M^2(\avg{\hv})\ \avg{\M(\hv)\hv}
+ 2\M^3(\avg{\hv}) \hv + \M(\avg{\hv})\ \avg{\M^2(\hv)\hv} \right)
\right),
\end{aligned}
\end{equation*}
which demonstrates \eqref{eq_flux_conservative}.

\textbf{Step 3:} In the last step, we incorporate the source term and the fact that we have non-zero bottom topography with the entropy variables. Including bottom topography alters the entropy variable $\hw_1$ to be
\[
\hw_1= g\left(\hh+ \hb\right)+ \frac{1}{2g}\M(\hv)\hv.
\]
Using the identity \eqref{eq_Tadmor_2}, we obtain that the source term has to be discretized so that \eqref{eq_calculated} is fulfilled, i.e.,

\begin{equation}
g F^{\#}_{1,i,k} \cdot \jump{\hat{b}}_{i,k}+ \hv^T_k \cdot S_{k,i}^\#- \hv^T_i \cdot S_{i,k}^\#=0,
\end{equation}
where $F^{\#}_{1,i,k}$ is the first component of the numerical flux calculated between the $i^\text{th}$ and $k^\text{th}$ degree of freedom. Since the first part of the numerical fluxes \eqref{eq_flux_conservative} and \eqref{eq_conservative_flux_dai} have the same contribution, the source
can be handled analogously. We split the source $S_{i,k}^\#= \frac{1}{2} g \left(\M (\overline{\hh}_{i,k}) \jump{\hat b}_{i,k} \right) + \tilde{S}_2^\#$ therefore. 
Then, the extended numerical flux is given by $F^{\#,ext}_{2,i,k}= F_{2,i,k}^\#+ \frac{1}{2g} \M (\overline{\hh}_{i,k}) \jump{\hat b}_{i,k} +\tilde{S}_{2,i,k}^\#$
A second contribution denoted by $\tilde{S}_{2,i,k}$ must be included such that the following equation holds

\begin{equation}\label{eq_basic1}
\begin{aligned}
 \frac{1}{2g} \left( \left( (\avg{\M^2(\hv) \hv})_{i,k}-  \M(\avg{\M(\hv) (\hv)})_{i,k} \avg{ (\hv)_{i,k}} \right)\right)^T \jump{\hat{b}}_{i,k}+
\hv^T_k \tilde{S}_{k,i}^\#- \hv^T_i \tilde{S}_{i,k}^\#
=0.
\end{aligned}
\end{equation}
This additional term comes from   the first numerical flux component $F^\#_1$. We obtain 
\begin{equation*}
\begin{aligned}
&\left( (\avg{\M^2(\hv) \hv})_{i,k}-  \M(\avg{\M(\hv) (\hv)})_{i,k} \avg{ (\hv)_{i,k}} \right)
=\frac{(\M^2(\hv) \hv)_i +(\M^2(\hv) \hv)_k }{2}\\
&- \frac{(\M(\M(\hv) (\hv))_i +\M(\M(\hv) (\hv))_k)(u_i+u_k)}{4}\\
&=\frac{(\M^2(\hv) \hv)_i +(\M^2(\hv) \hv)_k - (\M^2 (\hv))_i u_k -  (\M^2 (\hv))_k u_i   }{4} = \frac{\M^2(\hv)_i (\hv_i-\hv_k) -(\M^2(\hv) )_k (\hv_i- \hv_k)  }{4} \\
&= \frac{1}{4}  \left((\M^2(\hv) )_i- (\M^2(\hv) )_k  \right) \left( (\hv_i-\hv_k) \right)= 
 \frac{1}{4}  \left((\M(\hv) )_i- (\M(\hv) )_k  \right) \left((\M(\hv) )_i+(\M(\hv) )_k  \right) \left( \hv_i-\hv_k\right). 
\end{aligned}
\end{equation*}
Thus, choosing $\tilde{S}_{i,k}^\#= \frac{1}{8g}  \left((\M(\hv) )_k- (\M(\hv) )_i  \right)^2 \jump{\hat{b}}_{i,k}$ results in the desired equality \eqref{eq_basic1} since we have
\begin{equation*}\label{eq_basic_2}
\begin{aligned}
& \frac{1}{8g} \left( \left((\M(\hv) )_i- (\M(\hv) )_k  \right) \left((\M(\hv) )_i+(\M(\hv) )_k  \right) \left( \hv_i-\hv_k\right) \right)^T \jump{\hat{b}}_{i,k}\\
& +\frac{1}{8g} 
\hv^T_k  \left((\M(\hv) )_i- (\M(\hv) )_k  \right)^2 \jump{\hat{b}}_{k,i}- \frac{1}{8g} \hv^T_i \left((\M(\hv) )_k- (\M(\hv) )_i  \right)^2 \jump{\hat{b}}_{i,k}\\
&= \frac{1}{8g} \Bigg( \left( \left((\M(\hv) )_i- (\M(\hv) )_k  \right) \left((\M(\hv) )_i+(\M(\hv) )_k  \right) \left( \hv_i-\hv_k\right) \right)^T \\
&\qquad- \hv^T_k  \left((\M(\hv) )_i- (\M(\hv) )_k  \right)^2 - \hv^T_i \left((\M(\hv) )_k- (\M(\hv) )_i  \right)^2  \Bigg) \jump{\hat{b}}_{i,k}=0.
\end{aligned}
\end{equation*}
Therefore, we have\footnote{Here, we correct a typo from \cite{bender_2023} where the second part is missing. }
\begin{equation*}\label{eq:bterm}
S^{\#}_{i,k} = \begin{pmatrix}
0 \\
\frac{1}{2} g \left(\M (\overline{\hh}_{i,k}) \jump{\hat b}_{i,k} \right)+  \frac{1}{8g}  \left((\M(\hv) )_k- (\M(\hv) )_i  \right)^2 \jump{\hat{b}}_{i,k}
\end{pmatrix}.
\end{equation*}
Thus, we obtain the complete source term discretization for the second component of entropy conservative flux. This completes the proof of Theorem \ref{th_entropy_conservative}.

\section{Expressions for initial conditions}
%{\color{blue}This section will be removed after implementation of the initial conditions. Some expressions may be recycled in the results section.}
\subsection{Uncertain bottom topography, 1D}\label{sec:uncertain_bottom1D}
First consider a bottom topography where the \emph{location} of the bump function is uncertain:
\[
b(x, \xi) =
\left\{
\begin{array}{ll}
1-\frac{1}{4}(x-x_0(\xi))^2 & \mbox{if } |x-x_0| \leq 2\\
0 & \mbox{otherwise}
\end{array}
\right.
\]
with $x_0 = 10+c\xi$, $c>0$. Let the end points and mid point of the domain of wavelet $\psi_k$ be denoted $d_k^{L}$, $d_k^{U}$, and $d_k^{M}$, respectively. Then the projection can be expressed as the sum of two smooth integrals,
\[
b_k(x) = \frac{1}{2}\int_{\max(d_k^{L},\ \min(\frac{x-12}{c}),\ d_k^M )}^{\min(d_k^{M},\ \max(\frac{x-8}{c}),\ d_k^L )}
\left( 1-\frac{1}{4}(x-x_0(\xi))^2 \right)\psi_k(\xi)\textup{d}\xi
+
\frac{1}{2}\int_{\max(d_k^{M},\ \min(\frac{x-12}{c}),\ d_k^U )}^{\min(d_k^{U},\ \max(\frac{x-8}{c}),\ d_k^M )}
\left( 1-\frac{1}{4}(x-x_0(\xi))^2 \right)\psi_k(\xi)\textup{d}\xi
\]
This expression is straightforward to evaluate with numerical quadrature pointwise in $x$, but has no easy closed-form expression. Next, we consider a random scaling with unit expectation of the same bump function with no uncertainty in its location:
\[
b(x, \xi) =
\left\{
\begin{array}{ll}
(1+c\xi)\left( 1-\frac{1}{4}(x-x_0)^2 \right) & \mbox{if } |x-x_0| \leq 2\\
0 & \mbox{otherwise}
\end{array}
\right.
\]
with $0<c<1$ and $x_0=10$. In this case, the projections onto the Haar wavelets are much easier to evaluate:
\[
b_1 = 
\left\{
\begin{array}{ll}
1-\frac{1}{4}(x-x_0)^2 & \mbox{if } |x-x_0| \leq 2\\
0 & \mbox{otherwise}
\end{array}
\right.
\]
and for $k>1$:
\[
b_k = 
\left\{
\begin{array}{ll}
-\frac{c 2^{-1.5\ell} }{2}(1-\frac{1}{4}(x-x_0)^2) & \mbox{if } |x-x_0| \leq 2\\
0 & \mbox{otherwise}
\end{array}
\right.
\]
where $\ell$ denotes the level of the wavelets, starting from 0. In practice, the deterministic bump function is simply to be scaled by each of the entries of $[1, -c/2, -c2^{-5/2}, -c2^{-5/2}, -c2^{-4}, -c2^{-4}, -c2^{-4}, -c2^{-4}]$ for the case of 8 basis functions.

\begin{figure}
\centering 
\subfloat{\includegraphics[width=0.4\textwidth]{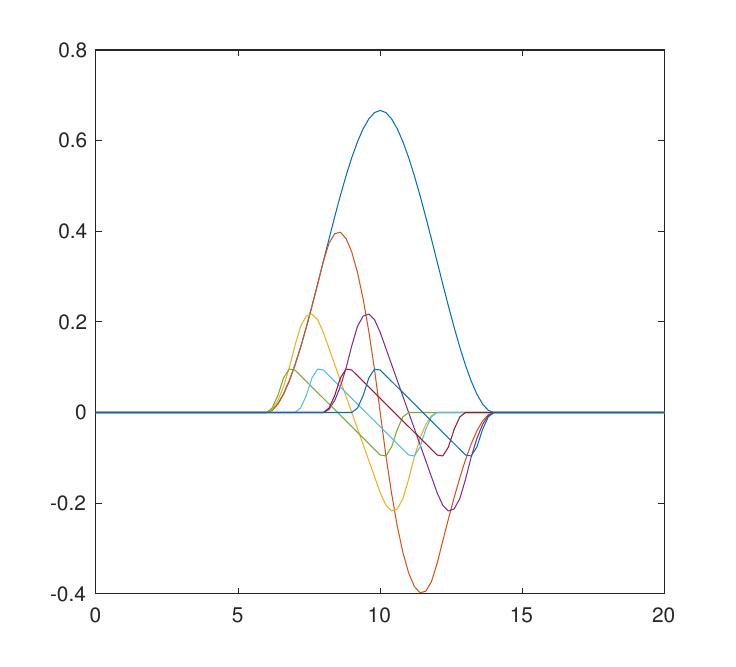}}
\subfloat{\includegraphics[width=0.4\textwidth]{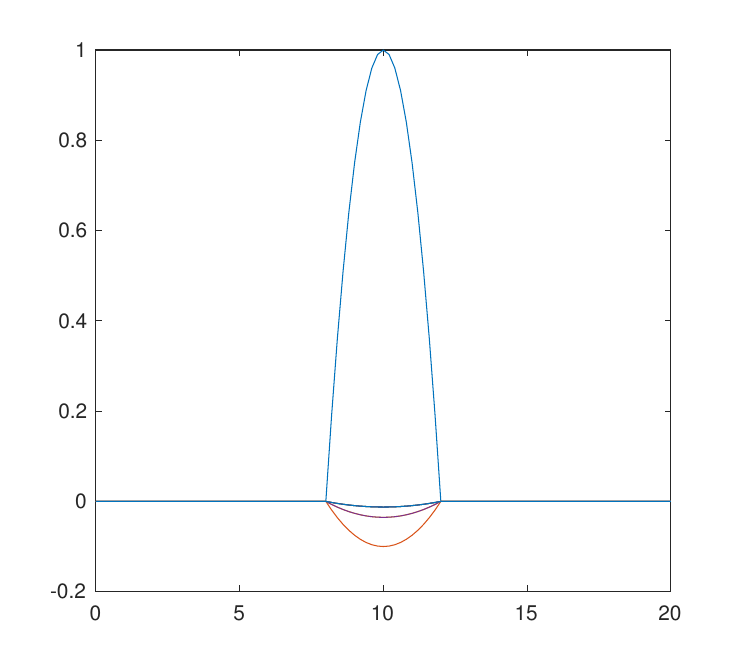}}
\caption{First eight Haar wavelets coefficients for uncertain bump location (left), and uncertain bump height (right). Note that some coefficient functions are identical in the latter case.}
\label{fig}
\end{figure}

\subsection{Uncertain bottom topography, 2D}\label{sec:uncertain_bottom2D}
Consider the 2D generalization of the bottom topography where the location of the bump is uncertain:
\[
b(x, \xi) =
\left\{
\begin{array}{ll}
1-\frac{1}{4^2}(x-x_0(\xi_1))^2(y-y_0(\xi_2))^2 & \mbox{if } |x-x_0(\xi_1)| \leq 2 \mbox{ and } |y-y_0(\xi_2)| \leq 2 \\
0 & \mbox{otherwise}
\end{array}
\right.
\]
%{\color{blue}[The expression below seems correct (tested in Matlab). The notation will be modified and explained. Note that the 2D bump is not radial, so it has a slightly square appearance.]}
Let the support of $\psi_k$ be $[d_k^{\text{L},j} d_k^{\text{U},j}]$ with mid-point $d_k^{\text{M},j}$ in the $j$th random variable, $j=1,2$. 
With $x_0(\xi_1)=10+c \xi_1$ and $y_0(\xi_2) = y + c \xi_2$, we get the following expression suitable for numerical quadrature since each integral is smooth:
\begin{multline}
b_k(x) = \\
%% Int 1
= \frac{1}{4} 
\int_{\max\left(d_k^{L,2},\ \min\left[\frac{y-12}{c},\ d_k^{M,2} \right]\right)}^{\min \left(d_k^{M,2},\ \max\left[\frac{y-8}{c},\ d_k^{L,2} \right] \right)}
\int_{\max \left(d_k^{L,1},\ \min \left[ \frac{x-12}{c},\ d_k^{M,1} \right] \right) }^{\min \left(d_k^{M,1},\ \max \left[ \frac{x-8}{c},\ d_k^{L,1} \right] \right)}
\left( 1-\frac{1}{4^2}(x-x_0(\xi_1))^2(y-y_0(\xi_2))^2 \right)\psi_k(\xi_1,\xi_2)\textup{d}\xi_1 \textup{d}\xi_2 \\
%% Int 2
+
\frac{1}{4} 
\int_{\max \left(d_k^{M,2},\ \min \left[\frac{y-12}{c},\ d_k^{U,2} \right] \right)}^{\min \left(d_k^{U,2},\ \max \left[\frac{y-8}{c},\ d_k^{M,2} \right) \right] }
\int_{\max \left(d_k^{L,1},\ \min \left[\frac{x-12}{c},\ d_k^{M,1} \right] \right)}^{\min \left(d_k^{M,1},\ \max \left[\frac{x-8}{c},\ d_k^{L,1} \right] \right)}
\left( 1-\frac{1}{4^2}(x-x_0(\xi_1))^2(y-y_0(\xi_2))^2 \right)\psi_k(\xi_1,\xi_2)\textup{d}\xi_1 \textup{d}\xi_2 \\
%%% Int 3
+
\frac{1}{4} 
\int_{\max \left(d_k^{L,2},\ \min \left[ \frac{y-12}{c},\ d_k^{M,2} \right] \right)}^{\min \left( d_k^{M,2},\ \max \left[\frac{y-8}{c},\ d_k^{L,2} \right] \right)}
\int_{\max\left( d_k^{M,1},\ \min \left[ \frac{x-12}{c},\ d_k^{U,1} \right] \right)}^{\min \left(d_k^{U,1},\ \max \left[ \frac{x-8}{c},\ d_k^{M,1} \right] \right)}
\left( 1-\frac{1}{4^2}(x-x_0(\xi_1))^2(y-y_0(\xi_2))^2 \right)\psi_k(\xi_1,\xi_2)\textup{d}\xi_1 \textup{d}\xi_2 \\
+
\frac{1}{4} 
\int_{\max \left(d_k^{M,2},\ \min \left[\frac{y-12}{c},\ d_k^{U,2} \right] \right)}^{\min \left(d_k^{U,2},\ \max \left[\frac{y-8}{c},\ d_k^{M,2} \right] \right)}
\int_{\max \left( d_k^{M,1},\ \min \left[\frac{x-12}{c},\ d_k^{U,1} \right] \right)}^{\min \left(d_k^{U,1},\ \max \left[\frac{x-8}{c},\ d_k^{M,1} \right] \right)}
\left( 1-\frac{1}{4^2}(x-x_0(\xi_1))^2(y-y_0(\xi_2))^2 \right)\psi_k(\xi_1,\xi_2)\textup{d}\xi_1 \textup{d}\xi_2 
\end{multline}
Note that the integration limits are spatially dependent, so the integrals are computed for each numerical grid point.

%{\color{blue}{TODO: Fix other brackets in limits. Mention that limits are spatially varying. Also, if function changes with uncertain position the limits must be adjusted accordingly.}}